\newcommand{\burl}[1]{\textcolor{blue}{\url{#1}}}
\newcommand{\emaillink}[1]{\textcolor{blue}{\href{mailto:#1}{#1}}}
\numberwithin{equation}{section}
\newtheorem{theorem}{Theorem}[section]
\newtheorem{lemma}[theorem]{Lemma}
\newtheorem{proposition}[theorem]{Proposition}
\newtheorem{corollary}[theorem]{Corollary}
\newtheorem{conjecture}[theorem]{Conjecture}
\newtheorem{definition}[theorem]{Definition}
\newtheorem{example}[theorem]{Example}
\newcommand{\x}{\mathbf{x}}
\newcommand{\y}{\mathbf{y}}
\newcommand{\kk}{\mathbf{k}}
\newcommand{\Z}{\mathbb{Z}}
\newcommand{\R}{\mathbb{R}}
\newcommand{\N}{\mathbb{N}}
\begin{document}

\title{Sets Characterized by Missing Sums and Differences in Dilating Polytopes}

\author[Do]{Thao Do}
\email{\emaillink{thao.do@stonybrook.edu}}
\address{Mathematics Department, Stony Brook University, Stony Brook, NY, 11794}

\author[Kulkarni]{Archit Kulkarni}
\email{\emaillink{auk@andrew.cmu.edu}}
\address{Department of Mathematical Sciences, Carnegie Mellon University, Pittsburgh, PA 15213}

\author[Miller]{Steven J. Miller}
\email{\emaillink{sjm1@williams.edu}, \emaillink{Steven.Miller.MC.96@aya.yale.edu}}
\address{Department of Mathematics and Statistics, Williams College, Williamstown, MA 01267}

\author[Moon]{David Moon}
\email{\emaillink{dm7@williams.edu}}
\address{Department of Mathematics \& Statistics, Williams College, Williamstown, MA 01267}

\author[Wellens]{Jake Wellens}
\email{\emaillink{jwellens@caltech.edu}}
\address{Department of Mathematics, California Institute of Technology, Pasadena, CA, 91125}

\author[Wilcox]{James Wilcox}
\email{\emaillink{wilcoxjay@gmail.edu}}
\address{Department of Mathematics and Statistics, Williams College, Williamstown, MA 01267}

\subjclass[2010]{11P99 (primary), 11K99 (secondary).}

\keywords{Sum dominated sets, more sum than difference sets, convex sets}

\date{\today}

\begin{abstract}
A sum-dominant set is a finite set $A$ of integers such that $|A+A| > |A-A|$.  As a typical pair of elements contributes one sum and two differences, we expect sum-dominant sets to be rare in some sense.  In 2006, however, Martin and O'Bryant showed that the proportion of sum-dominant subsets of $\{0,\dots,n\}$ is bounded below by a positive constant as $n\to\infty$.  Hegarty then extended their work and showed that for any prescribed $s,d\in\mathbb{N}_0$, the proportion $\rho^{s,d}_n$ of subsets of $\{0,\dots,n\}$ that are missing exactly $s$ sums in $\{0,\dots,2n\}$ and exactly $2d$ differences in $\{-n,\dots,n\}$ also remains positive in the limit.

We consider the following question: are such sets, characterized by their sums and differences, similarly ubiquitous in higher dimensional spaces?  We generalize the integers in a growing interval to the lattice points in a dilating polytope.  Specifically, let $P$ be a polytope in $\mathbb{R}^D$ with vertices in $\mathbb{Z}^D$, and let $\rho_n^{s,d}$ now denote the proportion of subsets of $L(nP)$ that are missing exactly $s$ sums in $L(nP)+L(nP)$ and exactly $2d$ differences in $L(nP)-L(nP)$.  As it turns out, the geometry of $P$ has a significant effect on the limiting behavior of $\rho_n^{s,d}$.  We define a geometric characteristic of polytopes called local point symmetry, and show that $\rho_n^{s,d}$ is bounded below by a positive constant as $n\to\infty$ if and only if $P$ is locally point symmetric.  We further show that the proportion of subsets in $L(nP)$ that are missing exactly $s$ sums and at least $2d$ differences remains positive in the limit, independent of the geometry of $P$.  A direct corollary of these results is that if $P$ is additionally point symmetric, the proportion of sum-dominant subsets of $L(nP)$ also remains positive in the limit.

\end{abstract}

\thanks{This research was conducted as part of the 2013 SMALL REU program at Williams College and was partially supported funded by NSF grant DMS0850577 and Williams College; the third named author was also partially supported by NSF grant DMS1265673. We would like to thank our colleagues from the Williams College 2013 SMALL REU program, especially Frank Morgan, as well as Kevin O'Bryant for helpful conversations.}

\maketitle

\setcounter{equation}{0}

\tableofcontents

\section{Introduction}

Given a finite set $A\subset\Z$, we define  the sumset $A+A$ and the difference set $A-A$ by
\begin{align} A+A &\ =\ \{a_1 + a_2 : a_1, a_2 \in A\}, \nonumber\\
A-A &\ =\ \{a_1 - a_2 : a_1, a_2 \in A\}.
\end{align}
It is natural to compare the sizes of $A+A$ and $A-A$ as we vary $A$ over a family of sets.  As addition is commutative while subtraction is not, a pair of distinct elements $a_1, a_2 \in A$ generates two differences $a_1-a_2$ and $a_2-a_1$ but only one sum $a_1+a_2$.  We thus expect that most of the time, the size of the difference set is greater than that of the sumset---that is, we expect most sets $A$ to be \emph{difference-dominant}.  It is possible, however, to construct sets whose sumsets have more elements than their difference sets.  Such sets are called \emph{sum-dominant} or \emph{More Sums Than Differences} (MSTD) sets.  The first example of an MSTD set was discovered by Conway in the 1960s: $\{0,2,3,4,7,11,12,14\}$.  A set whose sumset has the same number of elements as its difference set is called \emph{balanced}.

We briefly review some of the key results in the field.  In 2006, Martin and O'Bryant \cite{MO} showed that not only do MSTD sets exist, but there exist many of them in some sense.  In particular, they proved that the proportion $\rho_n$ of subsets of $\{0,1,\dots,n\}$ that are MSTD is bounded below by a positive constant as $n\to\infty$.  They show that similar results hold as well for balanced and difference-dominant sets.  Hegarty \cite{He} then extended their work and showed that for any $s,d \in \N_0$, the proportion $\rho_n^{s,d}$ of subsets $A \subset \{0,1,\dots,n\}$ satisfying
\begin{align}
|\{0,1,\dots,2n\} \setminus (A+A)|\ =\ s,\ \ \ \ |\{-n,-n+1,\dots,n-1,n\} \setminus (A-A)|\ =\ 2d
\end{align}
also remains bounded below by a positive constant in the limit. Later, in 2010, Zhao \cite{Z} showed that both $\rho_n$ and $\rho_n^{s,d}$ converge as $n \to \infty$, with $\rho_n$ approaching a limit $\rho\simeq 4.5 \times 10^{-4}$.


This previous work explored the behavior of sums and differences of sets in the one-dimensional lattice $\Z$.  In particular it was observed that sum-dominant, balanced, and difference-dominant sets, as well as sets with even greater constraints on missing sums and differences, are all surprisingly ubiquitous on the line.  A natural question arises: are such sets similarly common in other spaces?

In this paper, we extend the theory to sets in higher dimensional lattices, namely $\Z^D$ for any $D>0$.\footnote{See \cite{DKMMW} for another generalization to sums and differences of correlated random pairs of sets in $\Z$.}  Interesting new features and complications arise in higher dimensions.  Whereas on the line it is natural to consider subsets of the integers in a growing interval, in higher dimensions we can begin to consider different geometries for our overall subset region.  A natural high-dimensional analogue of the interval is a convex polytope.  We examine in particular the additive behavior of the lattice points in an arbitrary dilating $D$-dimensional convex polytope with lattice point vertices.

Let $P$ be a convex polytope in $\mathbb{R}^D$ with vertices in $\mathbb{Z}^D$.  For any set $S\subset\R^D$, let $L(S)$ denote the set of lattice points contained in $S$; that is, $L(S) = S \cap \Z^D$.  Furthermore, let $nS$ denote the dilation of $S$ by a factor of $n$ about the origin.  In the spirit of Hegarty, we focus our attention to the proportion $\rho_n^{s,d}$ of subsets $A\subset L(nP)$ such that
\begin{align}
|(L(nP)+L(nP)) \setminus (A+A)|\ =\ s,\ \ \ |(L(nP)-L(nP)) \setminus (A-A)|\ =\ 2d,
\end{align}
for any prescribed $s,d \in \N_0$.  In this paper we assume that $P$ is fixed, and revert to the more informal description that such subsets $A$ are missing exactly $s$ sums and missing exactly $2d$ differences.  Studying missing sums and differences rather than the number of sums and differences is the natural generalization of the 1-dimensional results, which we discuss at the end of this section and in Section \ref{sec:conclusion}.



The geometry of $P$ has a significant effect on the limiting behavior of $\rho_n^{s,d}$.  Before we state our main results, we introduce some terminology that helps us distinguish between polytopes.

\begin{definition}
Let $P$ be a convex polytope.  Vertices $\mathbf{u}$ and $\mathbf{v}$ of $P$ are \emph{strictly antipodal} if there exist parallel supporting hyperplanes, $H_1$ and $H_2$, of $P$ such that $H_1 \cap P = \{\mathbf{u}\}$ and $H_2 \cap P = \{\mathbf{v}\}$.
\end{definition}

\begin{definition}
Given a vertex $\mathbf{v}$ of $P$, the \emph{supporting cone} $C(\mathbf{v})$ at $\mathbf{v}$ is the set
\begin{align}
\mathbf{v} + \bigcup_{\lambda\ge 0} \lambda(P-\mathbf{v}).
\end{align}
Equivalently, $C(\mathbf{v})$ is the convex hull of the half-lines formed by extending the edges of $P$ at $v$.
\end{definition}

\begin{definition}
A polytope $P$ is \emph{point symmetric} if there exists a point $\mathbf{x}$ such that $P = \mathbf{x} - P$.
\end{definition}

\begin{definition}
A convex polytope $P$ with $m$ vertices is \emph{locally point symmetric} if its vertices can be partitioned into $m/2$ pairs of strictly antipodal vertices such that for each pair $\{\mathbf{u}, \mathbf{v}\}$,
\begin{align}
C(\mathbf{u}) - \mathbf{u}\ =\ \mathbf{v} - C(\mathbf{v}).
\end{align}
\end{definition}

Note we subtract the vertex above (in $C(\mathbf{u}) - \mathbf{u}$ and $\mathbf{v} - C(\mathbf{v})$) so that the supporting cones are standardized with their apexes at the origin.

\begin{example}
Any point symmetric polytope is locally point symmetric.
\end{example}

\begin{example}
Consider the hexagon $ABCDEF$ in Figure \ref{fig:lpsHex}, where $A$ and $D$, $B$ and $E$, and $C$ and $F$ form pairs of strictly antipodal vertices.  As $\overline{AB}$ and $\overline{DE}$, $\overline{BC}$ and $\overline{EF}$, and $\overline{CD}$ and $\overline{FA}$ form pairs of parallel edges, $ABCDEF$ is locally point symmetric.
\end{example}

\begin{figure}[h]
\centering
\definecolor{ffffff}{rgb}{1,1,1}
\begin{tikzpicture}[line cap=round,line join=round,>=triangle 45,x=1.0cm,y=1.0cm,scale=0.5]
\draw (1.5,3) -- (5,3) -- (9,-3) -- (7,-5) -- (1,-5) -- (-2,-.5) -- cycle;
\begin{scriptsize}
\draw[color=black] (1.3,3.4) node {$A$};
\draw[color=black] (5.2,3.4) node {$B$};
\draw[color=black] (9.5,-3) node {$C$};
\draw[color=black] (7.2,-5.4) node {$D$};
\draw[color=black] (0.8,-5.4) node {$E$};
\draw[color=black] (-2.5,-.5) node {$F$};
\end{scriptsize}
\end{tikzpicture}
\vspace{-0.2cm}
\caption{\small \label{fig:lpsHex} A locally point symmetric hexagon.}
\end{figure}
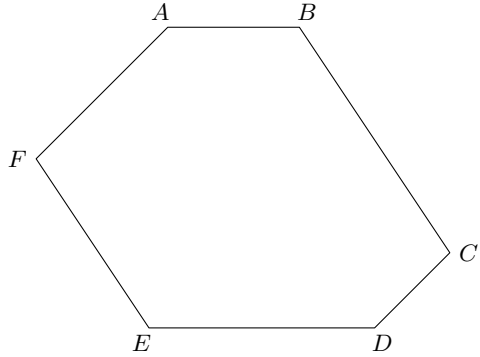

As it turns out, whether $P$ has local point symmetry determines whether $\rho_n^{s,d}$ remains positive in the limit.  We prove the following result.

\begin{theorem} \label{thm:1}
Let $P$ be a convex polytope in $\R^D$ with vertices in $\Z^D$, and let $s,d \in \N_0$ be given.  There exists a constant $c_{s,d} > 0$ such that, for sufficiently large $n$, at least $c_{s,d}\cdot 2^{|L(nP)|}$ of the subsets of $L(nP)$ have exactly $s$ missing sums and exactly $2d$ missing differences if and only if $P$ is locally point symmetric.
\end{theorem}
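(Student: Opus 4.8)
The plan is to establish the two directions separately, with the bulk of the work devoted to constructing ``many'' subsets with the prescribed defect when $P$ is locally point symmetric, mirroring the base-expansion/fringe arguments of Martin--O'Bryant and Hegarty but adapted to the polytope setting. The key observation is that the additive behavior of a subset $A \subseteq L(nP)$ is controlled almost entirely by its intersection with small neighborhoods of the vertices of $nP$: the sums that can be missing from $A+A$ must lie near the ``extremal'' parts of $L(nP)+L(nP)$, which correspond to pairs of vertices, and similarly for differences. So the first step is a \emph{localization lemma}: for fixed $s,d$ there is a radius $R = R(s,d)$ such that whether $A$ has exactly $s$ missing sums and $2d$ missing differences depends only on the portion of $A$ within distance $Rn^{\epsilon}$ (or within a suitable $R$-neighborhood after rescaling) of the vertices, provided the ``middle'' of $A$ is sufficiently dense; and a random subset is sufficiently dense in the middle with probability $1 - o(1)$, in fact with probability bounded below by a positive constant, by a second-moment or Janson-type argument showing that every sum in the middle range is hit and every difference in the middle range is hit.

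Granting localization, the forward direction proceeds as follows. Near a vertex $\mathbf{v}$ of $P$, the lattice points of $nP$ look (after translating $\mathbf{v}$ to the origin) like the lattice points of the supporting cone $C(\mathbf{v}) - \mathbf{v}$ truncated to a bounded region. For a strictly antipodal pair $\{\mathbf{u},\mathbf{v}\}$ with $C(\mathbf{u}) - \mathbf{u} = \mathbf{v} - C(\mathbf{v})$, the local picture at $\mathbf{u}$ is exactly the point reflection of the local picture at $\mathbf{v}$. This is precisely the structure that lets a one-dimensional-style construction go through: one chooses, independently near each antipodal pair, a ``fringe pattern'' that by itself forces exactly the desired number of missing sums coming from that pair (using that $2\mathbf{v}$, $\mathbf{u}+\mathbf{v}$, $2\mathbf{u}$ are genuinely extremal and can only be missed in controlled ways) and exactly the desired number of missing differences (the difference $\mathbf{u}-\mathbf{v}$ and its negative being the extremal differences), while leaving the bulk free. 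Because there are $\Theta(n^{D-1})$ lattice points in each vertex neighborhood of a fixed scale, the number of admissible fringe patterns is a positive proportion of all patterns there, and the middle contributes its positive-proportion factor from the localization step; multiplying gives $c_{s,d} \cdot 2^{|L(nP)|}$. The local point symmetry hypothesis is used exactly to guarantee that a single fringe choice simultaneously behaves correctly for sums (which pair $\mathbf{u}$ with $\mathbf{v}$) and for differences (which also pair $\mathbf{u}$ with $\mathbf{v}$, but reflected).

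For the converse, suppose $P$ is not locally point symmetric. Then either $P$ has a vertex not belonging to any strictly antipodal pair with matching reflected supporting cone, or every attempted pairing fails for some pair. In either case there is some extremal sum (a point of the form $2\mathbf{v}$ or $\mathbf{u}+\mathbf{v}$) or extremal difference whose ``reachability'' from $A$ is governed by a vertex neighborhood whose geometry is not the reflection of any other vertex neighborhood; the point is that the number of missing sums and the number of missing differences cannot both be pinned to their prescribed small values with more than exponentially small probability, because forcing the sum side to land on $s$ forces a mismatch on the difference side (or vice versa) — the asymmetry of the cone means the fringe configurations that fix $s$ and those that fix $2d$ are ``misaligned'' and their intersection is too small. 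Concretely one shows $\rho_n^{s,d} \to 0$ by showing that the event ``exactly $s$ missing sums'' and the event ``exactly $2d$ missing differences'' are, after conditioning on the dense middle, controlled by essentially disjoint sets of fringe coordinates in an incompatible way, so their joint probability decays. The main obstacle is making the localization lemma precise and uniform: one must carefully define the vertex neighborhoods (their diameter may need to grow slowly with $n$ to absorb all potentially-missing sums/differences, since a missing sum need not be within $O(1)$ of a vertex), verify that the middle is dense with probability bounded below (not just $1-o(1)$ — although $1-o(1)$ suffices for the lower bound, one needs the quantitative independence of the fringe events from the middle event), and handle the interaction between different vertex neighborhoods, e.g. when edges of $P$ are long and nearly parallel so that ``local'' regions at different vertices are not truly independent. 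Verifying that the antipodality and cone-matching conditions are exactly the combinatorial conditions that make the fringe construction close up is the conceptual heart of the argument.
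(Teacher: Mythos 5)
Your proposal gets the overall architecture (fringe/middle decomposition, localization to vertex neighborhoods, using local point symmetry to match antipodal supporting cones) roughly right for the forward direction, but there is a genuine gap in your argument for the converse, and the mechanism you propose would not work.

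For the ``only if'' direction, you argue that when $P$ is not locally point symmetric, ``the fringe configurations that fix $s$ and those that fix $2d$ are misaligned,'' so the constraints on sums and on differences conflict and cannot be satisfied simultaneously with positive probability. This is not the obstruction, and in fact Theorem~\ref{thm:2} of the paper disproves the intuition: for \emph{any} $P$ with lattice vertices one can achieve exactly $s$ missing sums together with \emph{at least} $2d$ missing differences with probability bounded below — there is no incompatibility between the sum constraint and the difference constraint per se. What fails for non-locally-point-symmetric $P$ is purely on the difference side, and it fails for ``exactly $2d$'' regardless of what you ask about sums. The actual argument (Lemma~\ref{lemma:uniqueDiff}) is that when $P$ is not locally point symmetric there is a vertex $\mathbf{v}$ and an entire edge $E$ such that every difference $\mathbf{e}-\mathbf{v}$ with $\mathbf{e}\in E$ is uniquely formed; $nE$ has $\Theta(n)$ lattice points, so these give $\Theta(n)$ uniquely formed differences, and having exactly $2d$ of them missing forces all but $d$ of those edge points to lie in $A$. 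That event has probability $\binom{n+1}{d}2^{-(n+1-d)} = \Theta(n^d/2^n) \to 0$, so $\rho_n^{s,d}\to 0$. Your proposal never locates this linear supply of fragile differences and instead tries to play sums against differences, which leads nowhere.

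For the ``if'' direction your outline is in the right spirit, but you are missing the mechanism that makes the fringe construction concrete. You speak of choosing fringe patterns ``independently near each antipodal pair,'' each by itself forcing ``exactly the desired number of missing sums,'' and then count admissible patterns. The paper instead fixes a \emph{single} fringe set $F_r$: it chooses one strictly antipodal pair $(\mathbf{v}_1,\mathbf{v}_2)$ with parallel edges $(E_1,E_2)$, embeds Hegarty's one-dimensional fringe sets $L_{s,d},U_{s,d}$ along $nE_1,nE_2$ via the edge parametrizations, and fills in \emph{every other} fringe lattice point (so that no sums or differences are missing elsewhere in the fringe). The edge lemmas (Lemma~\ref{lemma:edgeSumSet} and Lemma~\ref{lemma:edgeDiffSet}) are what justify this: they show that sums along $nE_i+nE_i$ and differences along $nE_2-nE_1$ are \emph{exactly} the 1D sums and differences of the embedded sets, so the 1D construction transfers verbatim. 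The positive proportion then comes from the fixed probability $2^{-|B_r(nP)|}$ of realizing $F_r$ (a constant since $r$ is independent of $n$) times the positive probability that all middle sums and differences are present (Propositions~\ref{prop:middleSums} and \ref{prop:middleDiffs}), and the latter is where the real work lies: a convergence argument over the supporting cones, not the second-moment or Janson bound you gesture at. In particular, the characterization in Lemma~\ref{lemma:verticesofdiffset} of uniquely formed differences in a locally point symmetric polytope is what makes the middle-difference argument mirror the middle-sum argument; without it you have no reason the intersection $nP\cap(nP-\mathbf{k})$ behaves like a parallelotope near each vertex of $nP-nP$.
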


We restrict ourselves to polytopes with lattice point vertices because, as we will see, this allows us to exploit results in the one-dimensional case.  The main idea behind Theorem \ref{thm:1} is that convex polytopes without local point symmetry (and these constitute the vast majority of convex polytopes) have many uniquely formed differences as they dilate by $n$.  That is, there exist many differences $\mathbf{k} \in L(nP) - L(nP)$ each of for which there exists a unique pair of elements $\mathbf{p},\mathbf{q}\in L(nP)$ that satisfies $\mathbf{k}  = \mathbf{p} - \mathbf{q}$.  This makes it vanishingly unlikely as $n$ grows that there is a constant number of missing differences in the region $L(nP)-L(nP)$.


On the other hand, we can weaken our condition on the number of missing differences and obtain a positive proportion in the limit, independent of the geometry of $P$.

\begin{theorem} \label{thm:2}
Let $P$ be a convex polytope in $\R^D$ with vertices in $\Z^D$, and let $s,d\in\N_0$ be given.  There exists a constant $c_{s,d} > 0$ such that, for sufficiently large $n$, at least $c_{s,d}\cdot 2^{|L(nP)|}$ of the subsets of $L(nP)$ have exactly $s$ missing sums and at least $2d$ missing differences.
\end{theorem}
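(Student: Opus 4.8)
The plan is to use the classical "fringe" construction adapted to polytopes, mirroring the one-dimensional arguments of Martin--O'Bryant and Hegarty while relaxing the difference count to a one-sided condition. The key observation is that controlling sums and differences is essentially a local phenomenon near the vertices of $nP$: whether a sum $\mathbf{k} \in L(nP)+L(nP)$ near the "corner" $2\mathbf{v}$ is hit depends only on which lattice points of $A$ lie in a bounded neighborhood of the vertex $\mathbf{v}$, and similarly for differences near the extreme points of $L(nP)-L(nP)$. First I would fix, for each vertex $\mathbf{v}$ of $P$, a radius $R$ (depending only on $P$, $s$, $d$) and examine the set $F_\mathbf{v}$ of lattice points of $nP$ within distance $R$ of $n\mathbf{v}$; for large $n$ this is a translate of a fixed configuration determined by the supporting cone $C(\mathbf{v})$. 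I would prescribe the intersection $A \cap F_\mathbf{v}$ exactly (a "frozen fringe"), leave the lattice points in a thin shell just inside the fringe empty as a buffer, and let $A$ be arbitrary on the large remaining "bulk" of $L(nP)$.

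The main steps are then: (1) Show that with the bulk entirely filled in (i.e. $A \supseteq L(nP) \setminus (\text{fringes} \cup \text{buffer})$), every sum and every difference whose "coordinates" force at least one summand into the bulk is automatically realized — this is the standard observation that a full interval of lattice points generates all sums/differences except possibly those pinned near the boundary. Hence the only missing sums and differences come from the finitely many fringe neighborhoods, and there they are completely determined by the frozen fringe data. (2) Using the one-dimensional results along generic lattice lines through a vertex (this is exactly where lattice-point vertices are used: edges of $C(\mathbf{v})$ have rational directions, so the cone restricted to a line is an arithmetic-progression copy of $\{0,1,\dots,m\}$), exhibit at least one choice of frozen fringe at a single vertex $\mathbf{v}$ that produces exactly $s$ missing sums and \emph{at least} $2d$ missing differences, while all other vertices are given "saturated" fringes contributing no missing sums or differences. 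The at-least direction for differences is what makes this easy: we do not need a delicate cancellation to land on exactly $2d$, we just need the fringe to omit at least $2d$ differences, which a sufficiently sparse fringe pattern near one vertex trivially achieves (and we can always omit more differences than sums by the usual "each missing difference near an asymmetric corner is missed in a unique pair" phenomenon, or simply by taking $R$ large). (3) Count: the number of valid $A$ is at least $2^{|L(nP)| - O(1)}$ since only $O(1)$ lattice points (the fringes and buffer) are constrained, giving $c_{s,d} = 2^{-O(1)} > 0$ independent of $n$.

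The main obstacle I anticipate is \textbf{step (2)}: certifying that a prescribed fringe configuration near $\mathbf{v}$ yields \emph{exactly} $s$ missing sums globally, including ruling out "accidental" extra missing sums arising from interactions between two different vertex fringes or between a fringe and the bulk boundary. The buffer shell handles fringe--bulk interactions, but fringe--fringe interactions for sums $2\mathbf{v}$ versus $\mathbf{u}+\mathbf{v}$ need care; the resolution is that for $n$ large the relevant neighborhoods of $2\mathbf{v}$, $\mathbf{u}+\mathbf{v}$, etc.\ in $L(nP)+L(nP)$ are pairwise disjoint and each is governed by its own fringe(s), so one localizes the analysis. A secondary technical point is ensuring the one-dimensional "base case" configurations of Hegarty can be embedded inside a $D$-dimensional supporting cone without the complementary directions introducing new realized sums that destroy the count — this is arranged by choosing the fringe to be supported on a single generic line segment emanating from $\mathbf{v}$ and filling a controlled slab around it, so that the $D$-dimensional sumset near $2\mathbf{v}$ restricted to that line reduces exactly to the $1$-dimensional sumset Hegarty analyzed.
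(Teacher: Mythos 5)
Your proposal has the right overall shape (freeze a fringe, control missing sums and differences locally, count the remaining freedom), but it has a genuine gap between steps (1) and (3) that the paper spends most of its effort closing. In step (1) you argue that \emph{when the bulk is entirely filled in}, all middle sums and differences are realized; but in step (3) you count sets whose bulk is \emph{arbitrary}. These two things are not the same, and the paper's central technical work (Propositions \ref{prop:middleSums} and \ref{prop:middleDiffs}) is precisely the bridge: one must show that for a \emph{uniformly random} completion of the bulk, all middle sums (resp.\ differences) are present with probability bounded away from zero, uniformly in $n$. That is not "the standard observation that a full interval generates all sums." It requires bounding $\mathbb{P}[\mathbf{k}\notin A+A]$ in terms of $|L(nP\cap(\mathbf{k}-nP))|$, splitting $M_r(nP+nP)$ into a center region and an intermediate region, showing (Lemma \ref{lemma:tlogballs}) the intermediate region lies in $O(\log n)$-balls around vertices of $nP+nP$ where the intersection is a parallelotope, and then summing a rapidly decaying series over orthant-like cones. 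If you instead fix the bulk to be full, you only produce $O(1)$ admissible sets $A$, which is exponentially fewer than the required $c_{s,d}\cdot 2^{|L(nP)|}$. The independence structure for differences is also more delicate (a small $\mathbf{k}$ can reuse the same point in two pairs), which is why the paper has the separate Lemma \ref{lemma:probMissingDiff} with the $G_n,H_n,J_n$ trick.

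A secondary gap is in your localization to a line. The paper places the one-dimensional configurations on actual \emph{edges} of $P$, and the reason this works is the supporting-hyperplane argument of Lemma \ref{lemma:edgeSumSet}: since $E$ is contained in a supporting hyperplane $H$ with $H\cap Q = E$, any two points of $Q$ summing to a point of $E+E$ must themselves lie in $E$, so $(A+A)\cap(E+E) = (A\cap E)+(A\cap E)$. Your plan of a "generic line segment emanating from $\mathbf{v}$" with a "controlled slab around it" does not give this: a generic line through $\mathbf{v}$ is not extreme in $P$, so interior points contribute sums on the line and spoil the count of exactly $s$ missing sums. You should specify an edge of $P$ and invoke the supporting-hyperplane property, not a generic lattice line. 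Relatedly, the paper actually uses two distinct edges at the chosen vertex: one edge ($nE_1$, with the set $R_d=[0,d-1]\cup[2d,3d-1]$ and the uniquely formed differences of Lemma \ref{lemma:uniqueDiff}) to force at least $2d$ missing differences, and a second edge ($nE_2$, with Hegarty's $L_s,U_s$) to nail down exactly $s$ missing sums; this cleanly decouples the two counts, which is harder to do on a single edge. Your instinct about uniquely formed differences near an asymmetric corner is correct and matches Lemma \ref{lemma:uniqueDiff}, but the overall proof needs the probabilistic propositions to turn your fringe construction into a positive proportion.
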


As mentioned above, studying missing sums and differences provides a more natural framework in which to consider the additive behavior of high-dimensional sets.  If $D = 1$, and therefore $P$ is an interval, then setting $2d > s$ in the theorems above implies a positive lower bound on the proportion of MSTD subsets of $L(nP)$ as $n\to\infty$; this is Hegarty's generalization \cite{He} of the results of Martin and O'Bryant \cite{MO}.  The reason for this is that the overall set region $L(nP)$ is itself balanced, and thus having more sums than differences is equivalent to having more missing differences than missing sums. This is occasionally true in higher dimensions as well.  For example, consider subsets $A$ of the square $S_n := \{ (x,y) : x,y\in\{0,\dots,n\} \}$.  We see that $A+A$ lives in the square $S_n+S_n = \{ (x,y) : x,y\in\{0,\dots,2n\} \}$ and $A-A$ lives in the square $S_n - S_n = \{ (x,y) : x,y\in\{-n,\dots,n\}\}$, both regions having $(2n+1)^2$ elements.


As our polytope $P$ varies, however, it is much more typical that the difference set region $L(nP)-L(nP)$ is larger than the sumset region $L(nP)+L(nP)$.  If we now consider subsets $A$ of the triangle $T_n := \{ (x,y) \in \Z^2 : x\ge 0, y\ge 0, x+y \le n \}$, then $A+A$ lives inside $T_n + T_n$, which has $2n^2+3n+1$ elements, while $A-A$ lives inside $T_n - T_n$, which has $3n^2+3n+1$ elements; see Figure \ref{fig:triangle}.  Observe that $|T_n -T_n| - |T_n + T_n| = n^2$.  Since we fix the number $2d$ of missing differences independently of $n$, any $A\subset T_n$ that is missing exactly $2d$ differences will, for sufficiently large $n$, always result in a difference set $A-A$ that has more elements than is even possible in the sumset $A+A$.

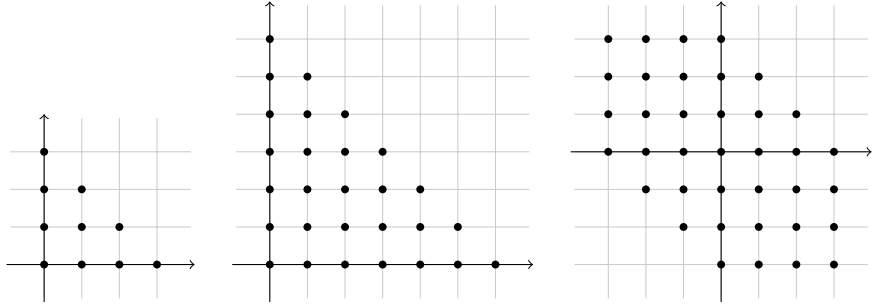
\begin{figure}[b]
\centering
\begin{tikzpicture}[scale=0.5]


\draw [step=1,thin,gray!40] (-0.9,-0.9) grid (3.9,3.9);

\draw [->] (-1,0) -- (4,0);
\draw [->] (0,-1) -- (0,4);

\foreach \x in {0,1,...,3}
	\foreach \y in {0,1,...,3}
    {
    	\pgfmathparse{int(\x+\y)};
    	\ifnum \pgfmathresult>3
       		{}
        \else
        	{\fill (\x,\y) circle[radius=3pt];}
        \fi
    }



\draw [step=1,thin,gray!40] (5.1,-0.9) grid (12.9,6.9);

\draw [->] (5,0) -- (13,0);
\draw [->] (6,-1) -- (6,7);

\foreach \x in {0,1,...,6}
	\foreach \y in {0,1,...,6}
    {
    	\pgfmathparse{int(\x+\y)};
    	\ifnum \pgfmathresult>6
       		{}
        \else
        	{\fill (\x+6,\y) circle[radius=3pt];}
        \fi
    }



\draw [step=1,thin,gray!40] (14.1,-0.9) grid (21.9,6.9);

\draw [->] (14,3) -- (22,3);
\draw [->] (18,-1) -- (18,7);

\foreach \x in {-3,...,3}
	\foreach \y in {-3,...,3}
    {

    	\pgfmathparse{int(\x+\y)}
        \ifnum \pgfmathresult>3
        {}
        \else
        {
            \pgfmathparse{int(\x+\y)}
            \ifnum \pgfmathresult<-3
            {}
            \else
            {
            	\fill (\x+18,\y+3) circle[radius=3pt];
            }
            \fi
        }
        \fi

    }
\end{tikzpicture}
\caption{\label{fig:triangle} \small Left: $T_3$ with $10$ elements.  Middle: $T_3+T_3$ with $28$ elements.  Right: $T_3-T_3$ with $37$ elements.}
\end{figure}

Thus, Theorems \ref{thm:1} and \ref{thm:2} do not imply that the proportion of MSTD subsets of $L(nP)$ remains positive in the limit.  In future study, we may begin to examine MSTD sets in higher dimensions by allowing $d$ to depend on $n$---in the case of the triangle set $T_n$, a subset $A\subset T_n$ missing exactly $s$ sums and exactly $2d$ differences is MSTD if and only if $d > s+n^2$.  We discuss this in more detail in Section \ref{sec:conclusion}, and conjecture that the proportion of such subsets approaches $0$ if $L(P)$ is not balanced. At the very least, Theorem \ref{thm:1} implies positive proportions of sum-dominant, balanced, and difference-dominant subsets in the limit if we add the assumption that $L(P)$ is balanced.  It is simple to show that $L(P)$ is balanced if $P$ is point symmetric.  Thus, we have

\begin{corollary} \label{cor:3}
Let $P$ be a convex, point-symmetric polytope in $\mathbb{R}^D$ with vertices in $\mathbb{Z}^D$.  There exists a constant $c>0$ such that, for sufficiently large $n$,
\begin{align}
\#\{ A \subset L(nP) : \text{A is sum-dominant} \}\ > \ c\cdot 2^{|L(nP)|}, \nonumber\\
\#\{ A \subset L(nP) : \text{A is difference-dominant} \} \ >\ c\cdot 2^{|L(nP)|}, \nonumber\\
\#\{ A \subset L(nP) : \text{A is balanced} \}\ >\ c\cdot 2^{|L(nP)|}.
\end{align}
\end{corollary}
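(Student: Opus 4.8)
The plan is to derive the corollary as a direct consequence of Theorem \ref{thm:1}, using the fact that a point-symmetric polytope $P$ has a balanced lattice-point set $L(P)$, and moreover that this balance persists under dilation. First I would establish the key arithmetic identity: if $P = \mathbf{x} - P$ for some point $\mathbf{x}$ (which, since the vertices of $P$ lie in $\Z^D$, must have $\mathbf{x} \in \tfrac{1}{2}\Z^D$), then $nP = n\mathbf{x} - nP$ as well, so each dilate $nP$ is again point symmetric. From this I would show $|L(nP) + L(nP)| = |L(nP) - L(nP)|$. The cleanest way is to exhibit a bijection: the map $\mathbf{k} \mapsto n\mathbf{x} - \mathbf{k}$ (suitably interpreted so that it carries lattice points to lattice points, e.g. after clearing the common denominator of $\mathbf{x}$) sends $L(nP) + L(nP)$ bijectively onto $L(nP) - L(nP)$, because $\mathbf{p} + \mathbf{q} \mapsto (n\mathbf{x} - \mathbf{p}) - \mathbf{q}$ and $n\mathbf{x} - \mathbf{p}$ ranges over $L(nP)$ exactly as $\mathbf{p}$ does. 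One must check the parity/integrality detail in the case $n\mathbf{x} \notin \Z^D$; I would handle this by noting that $2\mathbf{x} \in \Z^D$ always, and passing to the sublattice or to the dilate $2nP$ so that the center is a lattice point, which suffices since we only need the equality of cardinalities for all large $n$.

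Next I would invoke Theorem \ref{thm:1} with the parameter choices $(s,d) = (0,0)$, $(s,d) = (1,0)$, and $(s,d) = (0,1)$ (or any convenient triple with $2d < s$, $2d = s$, and $2d > s$ respectively). Theorem \ref{thm:1} applies because a point-symmetric polytope is locally point symmetric (this is the first \texttt{Example} in the excerpt), so there is a positive constant $c_{s,d}$ such that, for all large $n$, at least $c_{s,d} \cdot 2^{|L(nP)|}$ subsets $A \subset L(nP)$ miss exactly $s$ sums and exactly $2d$ differences. Since $|L(nP)+L(nP)| = |L(nP)-L(nP)|$, for such an $A$ we have
\begin{align}
|A+A| - |A-A| \;=\; \bigl(|L(nP)+L(nP)| - s\bigr) - \bigl(|L(nP)-L(nP)| - 2d\bigr) \;=\; 2d - s.
\end{align}
Hence choosing $2d > s$ gives $|A+A| > |A-A|$ (sum-dominant), choosing $2d = s$ gives balanced, and choosing $2d < s$ gives difference-dominant. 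Taking $c$ to be the minimum of the three resulting constants $c_{s,d}$ yields the stated bound simultaneously for all three families.

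I do not expect any serious obstacle here; the corollary is essentially a bookkeeping consequence of Theorem \ref{thm:1}. The only point requiring genuine care is the claim that $L(nP)$ (or its dilate) is balanced — specifically the integrality subtlety when the center of symmetry $\mathbf{x}$ is a half-integer point rather than a lattice point. I would dispose of this by the remark that $P = \mathbf{x} - P$ forces $2\mathbf{x}$ to be a vertex-sum and hence lies in $\Z^D$, so for even $n$ the center $n\mathbf{x}$ is a lattice point and the bijection above is literally a lattice translation composed with negation; and since Theorem \ref{thm:1} already grants the conclusion for all sufficiently large $n$, restricting attention to even $n$ — or simply observing that $nP$ and $(n{+}1)P$ have the same symmetry type — loses nothing. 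Everything else is immediate from the displayed cardinality computation.
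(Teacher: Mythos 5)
Your proposal is correct and takes essentially the same route the paper intends: show $L(nP)$ is balanced and then invoke Theorem \ref{thm:1} with $(s,d)$ chosen so that $2d-s$ is positive, zero, or negative. One clean-up is worth flagging: in the paper's notation $P = \mathbf{x} - P$, the point $\mathbf{x}$ lies in $\Z^D$, not merely $\tfrac{1}{2}\Z^D$ --- the involution $\mathbf{p}\mapsto\mathbf{x}-\mathbf{p}$ carries vertices to vertices, so for any vertex $\mathbf{v}$ both $\mathbf{v}$ and $\mathbf{x}-\mathbf{v}$ are lattice points, whence $\mathbf{x}=\mathbf{v}+(\mathbf{x}-\mathbf{v})\in\Z^D$. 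Consequently $n\mathbf{x}\in\Z^D$ for every $n$ and the bijection $\mathbf{k}\mapsto n\mathbf{x}-\mathbf{k}$ needs no parity caveat. This matters because your fallback of ``restricting attention to even $n$'' would not by itself yield the corollary, which asserts the bound for \emph{all} sufficiently large $n$; fortunately the restriction is unnecessary.
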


\section{Sums and Differences of Edge Elements} \label{sec:edgeElem}

A key idea in past work on MSTD sets is the importance of fringe elements.  For any set $A\subset\{0,\dots,n\}$, there are relatively few ways of forming sums near $0$ and $2n$ and of forming differences near $-n$ and $n$.  Such sums and differences are formed entirely by elements in $A$ near $0$ and $n$---the fringe elements.  On the other hand, there are relatively many ways of forming the respective middle sums and middle differences, and thus they have high probability of being present as we let $A$ vary.  Thus, the sizes of $A+A$ and $A-A$ are predominantly affected by the elements of $A$ in the fringe, and so it is possible to control the balance of sums and differences of $A$ by cleverly fixing those fringe elements.


A similar idea extends to subsets of the lattice points in a polytope.  In this case, the fringe elements are the points near the vertices of the polytope.  In our chosen fixing of the fringe, elements along certain edges, or 1-faces, of the polytope play a particularly important role in controlling the number of missing sums and differences.  To that end, we establish in this section some ancillary lemmas that highlight the behavior of sums and differences of edge elements.

Let $P$ denote our given convex polytope in $\R^D$ with vertices in $\Z^D$.  We begin with the observation that because $P$ has lattice points as its vertices, the dilated polytope $nP$ has at least $n+1$ lattice points along each edge.  More specifically, if an edge $E$ of $P$ contains $b_E+1$ lattice points (where $b_E\ge 1$ since $E$ contains at least its two endpoints), then its dilated form $nE$ in $nP$ contains $nb_E+1$ lattice points.  Furthermore, these $nb_E+1$ lattice points are evenly spaced along the edge, and thus form their own one-dimensional lattice structure.  If $nE$ has endpoints $n\mathbf{e}_1$ and $n\mathbf{e}_2$, then we can define an injective affine transformation $T_{nE}:\R\to\R^D$ by setting
\begin{align}
T_{nE}(x)\ =\ (n\mathbf{e}_2-n\mathbf{e}_1)/(nb_E)\cdot x + n\mathbf{e}_1 \ = \ (\mathbf{e}_2-\mathbf{e}_1)/b_E\cdot x + n\mathbf{e}_1
\end{align}
for all $x\in\R$.  Note $T_{nE}$ forms a one-to-one correspondence between $[0,nb_E]$ and $L(nE)$.  Thus, when constructing a set $A\subset L(nP)$, we can `place' an arbitrarily large, one-dimensional set $S\subset[0,nb_E]$ along any edge $nE$ by taking $n$ to be sufficiently large and then setting $A\cap nE = T_{nE}(S)$.

Lemmas \ref{lemma:edgeSumSet} and \ref{lemma:edgeDiffSet} essentially state that for whatever one-dimensional sets are placed along edges of $nP$, we can find corresponding sumsets along edges in $nP+nP$ and, sometimes, corresponding difference sets along edges in $nP-nP$.

\begin{lemma} \label{lemma:edgeSumSet}
Let $Q$ be a convex polytope in $\mathbb{R}^D$ with vertices in $\mathbb{Z}^D$, let $E$ be an edge of $Q$, and let $A\subset L(Q)$.  Suppose $A\cap E = T_E(S)$, where $S\subset\Z$ and $T_E:\mathbb{R}\to\mathbb{R}^D$ is an injective affine transformation.  Then there exists an injective affine transformation $T_{E+E}:\mathbb{R}\to\mathbb{R}^D$ such that
\begin{align}
(A+A)\cap (E+E) = T_{E+E}(S+S).
\end{align}
\end{lemma}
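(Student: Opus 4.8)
The plan is to reduce the statement to an elementary fact about the structure of the edge $E+E$ inside $Q+Q$. First I would observe that $E+E$ is itself a face (specifically an edge or a degenerate segment) of $Q+Q$: if $E$ is the edge of $Q$ cut out by a supporting hyperplane with outer normal direction lying in the appropriate cone, then that same functional is maximized on $Q+Q$ precisely on $E+E$, since a linear functional maximized on $Q$ at the points of $E$ is maximized on $Q+Q$ at exactly the Minkowski sum of those points. Concretely, write $E$ as the segment with endpoints $\mathbf{e}_1,\mathbf{e}_2\in\mathbb{Z}^D$, so that $T_E(x) = \mathbf{w}x + \mathbf{b}$ for some direction vector $\mathbf{w} = (\mathbf{e}_2-\mathbf{e}_1)/b_E$ and translation $\mathbf{b}$, with $T_E$ carrying the integer interval $[0,b_E]$ bijectively onto $L(E)$. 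Then $E+E$ is the segment from $2\mathbf{e}_1$ to $2\mathbf{e}_2$, parametrized by $T_{E+E}(x) := \mathbf{w}x + 2\mathbf{b}$, which is again an injective affine transformation $\mathbb{R}\to\mathbb{R}^D$.

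The heart of the argument is the set equality $(A+A)\cap(E+E) = T_{E+E}(S+S)$, and I would prove the two inclusions separately. For ``$\supseteq$'': if $x\in S+S$, write $x = s_1+s_2$ with $s_1,s_2\in S$; then $T_E(s_1),T_E(s_2)\in A\cap E\subset A$, and $T_E(s_1)+T_E(s_2) = \mathbf{w}(s_1+s_2)+2\mathbf{b} = T_{E+E}(x)$, which also visibly lies in $E+E$, so $T_{E+E}(x)\in (A+A)\cap(E+E)$. For ``$\subseteq$'': suppose $\mathbf{p}+\mathbf{q}\in E+E$ with $\mathbf{p},\mathbf{q}\in A\subset L(Q)$. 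Here is where I invoke the face property: since $E+E$ is a face of $Q+Q$ exposed by a linear functional $\ell$, and $\mathbf{p},\mathbf{q}\in Q$ with $\mathbf{p}+\mathbf{q}$ attaining the maximum of $\ell$ on $Q+Q$, both $\mathbf{p}$ and $\mathbf{q}$ must individually attain the maximum of $\ell$ on $Q$, i.e.\ $\mathbf{p},\mathbf{q}\in E$. Hence $\mathbf{p},\mathbf{q}\in A\cap E = T_E(S)$, so $\mathbf{p} = T_E(s_1)$ and $\mathbf{q} = T_E(s_2)$ for some $s_1,s_2\in S$ (note $s_1,s_2$ are integers since $T_E$ restricts to a bijection onto $L(E)$), and then $\mathbf{p}+\mathbf{q} = T_{E+E}(s_1+s_2)\in T_{E+E}(S+S)$.

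The one step that requires a little care — and which I expect to be the main obstacle to write cleanly — is the claim that a linear functional maximized on the Minkowski sum $Q+Q$ at a point $\mathbf{p}+\mathbf{q}$ forces $\mathbf{p}$ and $\mathbf{q}$ to each be maximizers on $Q$. This is standard convexity: $\max_{Q+Q}\ell = \max_Q\ell + \max_Q\ell$, and $\ell(\mathbf{p})+\ell(\mathbf{q}) = \ell(\mathbf{p}+\mathbf{q})$ equals this sum while $\ell(\mathbf{p}),\ell(\mathbf{q})\le\max_Q\ell$ individually, forcing equality in both. One should also confirm that $E+E$ is genuinely a bounded segment and not something larger — this holds because $E$ is one-dimensional, so $E+E = \{\mathbf{u}+\mathbf{v} : \mathbf{u},\mathbf{v}\in E\}$ is the affine image of $E\times E$ under addition, which has dimension at most one. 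I would also remark that if $Q$ is such that no supporting hyperplane exposes exactly $E$ (which cannot happen here since $E$ is a genuine edge of a polytope, hence a face, hence exposed), the argument would need adjustment; but for polytopes every edge is an exposed face, so this is not an issue. Finally, injectivity of $T_{E+E}$ is immediate since its linear part $\mathbf{w}\ne\mathbf{0}$ (as $\mathbf{e}_1\ne\mathbf{e}_2$), completing the verification of all the claimed properties.
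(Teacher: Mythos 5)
Your proof is correct and takes essentially the same approach as the paper: both reduce the lemma to the set identity $(A+A)\cap(E+E) = (A\cap E)+(A\cap E)$ and then construct $T_{E+E}$ by doubling the translation part of $T_E$ while keeping the same linear part. The only difference is cosmetic, in the proof of the nontrivial inclusion: you phrase it as a splitting of the maximum of the exposing linear functional $\ell$ (from $\ell(\mathbf{p})+\ell(\mathbf{q})=2\max_Q\ell$ and $\ell(\mathbf{p}),\ell(\mathbf{q})\le\max_Q\ell$ force equality), whereas the paper notes that any $\mathbf{k}\in E+E$ is $2\mathbf{e}$ for a midpoint $\mathbf{e}\in E\subset H$ and that if $\mathbf{a}_1,\mathbf{a}_2$ with $\mathbf{a}_1+\mathbf{a}_2=\mathbf{k}$ both avoided the supporting hyperplane $H$ with $H\cap Q=E$ they would lie on opposite sides of $H$, expelling one from $Q$; these are two phrasings of the same fact, since $H$ is exactly the level set $\{\ell=\max_Q\ell\}$. (One tiny imprecision: you take $T_E(x)=\mathbf{w}x+\mathbf{b}$ with $\mathbf{w}=(\mathbf{e}_2-\mathbf{e}_1)/b_E$, but the lemma only assumes $T_E$ is some injective affine map; the argument works verbatim with arbitrary nonzero $\mathbf{w}$, so this is harmless.)
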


\begin{proof}
We first show that
\begin{align} \label{eq:edgeSums}
(A+A) \cap (E+E) = (A\cap E) + (A \cap E).
\end{align}
As $(A\cap E) + (A\cap E) \subset (A+A) \cap (E+E)$ is immediate, we need only show the forward inclusion.

Let $\mathbf{k}$ be a point in $E+E$.  By the convexity of $E$, there exists some $\mathbf{e}\in E$ such that $2\mathbf{e} = \mathbf{k}$.  Thus, for any pair of points $\mathbf{a}_1, \mathbf{a}_2 \in A$ with $\mathbf{a}_1 + \mathbf{a}_2 = \mathbf{k}$, we have that $(\mathbf{a}_1 + \mathbf{a}_2)/2 = \mathbf{e}$.  In other words, $\mathbf{a}_1$, $\mathbf{a}_2$ and $\mathbf{e}$ are collinear with $\mathbf{e}$ halfway between $\mathbf{a}_1$ and $\mathbf{a}_2$.  Let $H$ be a supporting hyperplane of $Q$ such that $H\cap Q = E$.  Suppose $\mathbf{a}_1,\mathbf{a}_2\not\in H$.  Since $\mathbf{e}\in E\subset H$, it must be that $\mathbf{a}_1$ and $\mathbf{a}_2$ are in different open half-spaces formed by $H$.  But, since $H$ supports $Q$, then either $\mathbf{a}_1$ or $\mathbf{a}_2$ is not in $Q$---a contradiction.  Thus we have that $\mathbf{a}_1,\mathbf{a}_2\in H$, and therefore $\mathbf{a}_1,\mathbf{a}_2\in E$.  In other words, $(A+A)\cap (E+E)\ \subset\ (A\cap E)+(A\cap E)$, and \eqref{eq:edgeSums} follows.

We now prove the lemma.  We can write $T_E(x) = M(x) + \mathbf{b}$ for all $x\in\R$, where $M:\R\to\R^D$ is an injective linear transformation and $\mathbf{b}\in\R^D$ is some translation vector.  Define $T_{2E}:\R\to\R^D$ such that $T_{2E}(x) = M(x) + 2\mathbf{b}$ for all $x\in\mathbb{R}$.  Since $M$ is injective and linear, $T_{2E}$ is injective and affine.  By \eqref{eq:edgeSums},
\begin{align}
(A+A)\cap 2E\ &=\ (A\cap E) + (A\cap E) \nonumber \\
              &=\ T_E(S) + T_E(S) \nonumber \\
              &=\ (M(S)+\mathbf{b})+(M(S)+\mathbf{b}) \nonumber \\
              &=\ M(S+S) + 2\mathbf{b} \nonumber \\
              &=\ T_{2E}(S+S),
\end{align}
as desired.
\end{proof}

\begin{lemma} \label{lemma:edgeDiffSet}
Let $Q$ be a locally point symmetric polytope in $\mathbb{R}^D$ with vertices in $\mathbb{Z}^D$, and let $A\subset L(Q)$.  For a pair of strictly antipodal vertices $\mathbf{v}_1$ and $\mathbf{v}_2$, let $E_1$ and $E_2$ be parallel edges such that $\mathbf{v}_1 \in E_1$ and $\mathbf{v}_2 \in E_2$.  Suppose $A\cap E_1 = T_{E_1}(S_1)$ and $A\cap E_2 = T_{E_2}(S_2)$, where $S_1,S_2\subset\mathbb{Z}$ and $T_{E_1},T_{E_2}:\mathbb{R}\to\mathbb{R}^D$ are injective affine transformations with the same associated linear transformation.  Then there exists an injective affine transformation $T_{E_2-E_1}:\mathbb{R}\to\mathbb{R}^D$ with
\begin{align}
(A-A)\cap (E_2-E_1)\ =\ T_{E_2-E_1}(S_2-S_1).
\end{align}
\end{lemma}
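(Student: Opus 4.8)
The plan is to mimic the structure of the proof of Lemma \ref{lemma:edgeSumSet}, separating the argument into a set-theoretic identity and a routine affine-coordinates computation. First I would prove the analogue of \eqref{eq:edgeSums}, namely
\begin{align}
(A-A)\cap (E_2-E_1)\ =\ (A\cap E_2) - (A\cap E_1).
\end{align}
The inclusion $\supseteq$ is immediate. For $\subseteq$, take $\mathbf{k}\in E_2-E_1$ and a pair $\mathbf{a}_1,\mathbf{a}_2\in A$ with $\mathbf{a}_2-\mathbf{a}_1=\mathbf{k}$. Since $\mathbf{k}=\mathbf{e}_2-\mathbf{e}_1$ for some $\mathbf{e}_1\in E_1,\mathbf{e}_2\in E_2$, the vector $\mathbf{k}$ is a translate realized both by $(\mathbf{a}_1,\mathbf{a}_2)$ and by $(\mathbf{e}_1,\mathbf{e}_2)$; in particular $\mathbf{a}_1-\mathbf{e}_1 = \mathbf{a}_2-\mathbf{e}_2 =: \mathbf{t}$. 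The goal is to show $\mathbf{a}_1\in E_1$ (equivalently $\mathbf{a}_2\in E_2$). I would use the local point symmetry hypothesis: let $\mathbf{v}_1,\mathbf{v}_2$ be the given strictly antipodal vertices with $\mathbf{v}_1\in E_1$, $\mathbf{v}_2\in E_2$, and let $H_1,H_2$ be parallel supporting hyperplanes with $H_i\cap Q=\{\mathbf{v}_i\}$. Because $E_1$ and $E_2$ are parallel edges through $\mathbf{v}_1$ and $\mathbf{v}_2$ and the supporting cones satisfy $C(\mathbf{v}_1)-\mathbf{v}_1 = \mathbf{v}_2 - C(\mathbf{v}_2)$, the polytope $Q$ is "pinched" in the direction of $E_1$ in such a way that the only translates of the edge-direction segment from one face to the other lie along the $E_1$–$E_2$ strip; I expect this to force $\mathbf{a}_1\in E_1$.

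Concretely, here is the mechanism I would push through. Pick a linear functional $\varphi$ that is constant on $H_1$ and achieves its minimum over $Q$ exactly on $\{\mathbf v_1\}$ (so $\varphi$ also achieves its maximum over $Q$ exactly on $\{\mathbf v_2\}$, by the parallel supporting hyperplane). Since $E_1$ and $E_2$ are parallel, $\varphi$ restricted to $E_1$ ranges over some interval $[\alpha,\beta]$ with $\varphi(\mathbf v_1)=\alpha = \min_Q \varphi$, and $\varphi$ restricted to $E_2$ ranges over $[\gamma,\delta]$ with $\varphi(\mathbf v_2)=\delta=\max_Q\varphi$; the cone condition pins down the edge directions so that $\varphi(\mathbf e_2)-\varphi(\mathbf e_1)$ takes the same value for the pair $(\mathbf e_1,\mathbf e_2)$ as for any parallel pair. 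Applying $\varphi$ to $\mathbf a_1 = \mathbf e_1 + \mathbf t$ and $\mathbf a_2 = \mathbf e_2 + \mathbf t$ and using $\alpha \le \varphi(\mathbf a_1)$, $\varphi(\mathbf a_2)\le \delta$ together with the local-symmetry relation between the cones at $\mathbf v_1$ and $\mathbf v_2$ should squeeze $\mathbf a_1$ onto the supporting hyperplane $H$ with $H\cap Q = E_1$, exactly as the hyperplane argument did in Lemma \ref{lemma:edgeSumSet}; then $\mathbf a_1\in H\cap Q = E_1$ and symmetrically $\mathbf a_2\in E_2$.

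Once the identity is established, the rest is bookkeeping identical in spirit to the end of Lemma \ref{lemma:edgeSumSet}. Write $T_{E_1}(x) = M(x) + \mathbf b_1$ and $T_{E_2}(x) = M(x) + \mathbf b_2$ with the \emph{same} injective linear part $M$ (this is exactly the hypothesis that the two affine maps share an associated linear transformation). Define $T_{E_2-E_1}(x) = M(x) + (\mathbf b_2 - \mathbf b_1)$; it is injective and affine. Then
\begin{align}
(A-A)\cap (E_2-E_1)\ &=\ (A\cap E_2) - (A\cap E_1) \nonumber\\
                      &=\ T_{E_2}(S_2) - T_{E_1}(S_1) \nonumber\\
                      &=\ (M(S_2)+\mathbf b_2) - (M(S_1)+\mathbf b_1) \nonumber\\
                      &=\ M(S_2 - S_1) + (\mathbf b_2 - \mathbf b_1) \nonumber\\
                      &=\ T_{E_2-E_1}(S_2 - S_1),
\end{align}
as desired.

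The main obstacle is the set-theoretic identity, specifically verifying that a stray pair $(\mathbf a_1,\mathbf a_2)\in A\times A$ realizing a difference $\mathbf k\in E_2 - E_1$ cannot have $\mathbf a_1,\mathbf a_2$ off the edges. Unlike the sumset case — where the supporting hyperplane of a single edge immediately forces both summands onto that hyperplane — here the two points lie near two different faces, and one must genuinely use local point symmetry (the equality of the two supporting cones up to the point reflection) rather than mere convexity. I would want to be careful that "$E_1$ and $E_2$ parallel with $\mathbf v_i\in E_i$" together with the cone condition really does rule out, say, $\mathbf a_1$ sitting on a neighboring edge that happens to be a translate of part of $E_1$; the strict antipodality of $\mathbf v_1,\mathbf v_2$ and the fact that $H_i\cap Q$ is the single vertex $\mathbf v_i$ (not a larger face) is what should close this gap, and I would state and use it explicitly.
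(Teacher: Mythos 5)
Your overall plan — first the set-theoretic identity $(A-A)\cap(E_2-E_1)=(A\cap E_2)-(A\cap E_1)$, then the routine affine bookkeeping — is exactly the paper's plan, and the bookkeeping is identical. The gap is in the identity, as you suspected, and the specific mechanism you propose does not close it.

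Your functional $\varphi$ is chosen so that $H_1\cap Q=\{\mathbf v_1\}$ and $H_2\cap Q=\{\mathbf v_2\}$, i.e., it supports $Q$ only at the two \emph{vertices}. Writing $\mathbf a_1=\mathbf e_1+\mathbf t$, $\mathbf a_2=\mathbf e_2+\mathbf t$, the bounds $\alpha\le\varphi(\mathbf a_1)$ and $\varphi(\mathbf a_2)\le\delta$ only force $\varphi(\mathbf t)\in[\alpha-\varphi(\mathbf e_1),\,\delta-\varphi(\mathbf e_2)]$, an interval of positive length whenever $\mathbf e_1\ne\mathbf v_1$ or $\mathbf e_2\ne\mathbf v_2$. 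So the vertex-supporting functional allows nonzero $\varphi(\mathbf t)$ and does not squeeze $\mathbf a_1$ onto any hyperplane; it constrains a single scalar, which is never enough to conclude $\mathbf a_1\in E_1$. The sumset argument had no such slack because there $\mathbf a_1+\mathbf a_2=\mathbf k$ pins the midpoint to a single point on a supporting hyperplane of $E$ itself.

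What the paper actually proves — and what your argument is missing — is that there exist a pair of \emph{parallel} supporting hyperplanes $H_1,H_2$ of $Q$ with $H_1\cap Q=E_1$ (the whole edge, not just the vertex) and, crucially, $H_2\cap Q=E_2$. Having a hyperplane supporting exactly $E_1$ is automatic for any edge; the content, and the place local point symmetry is used, is that the parallel hyperplane through $E_2$ meets $Q$ in nothing more than $E_2$. The paper shows this by contradiction: a stray point $\mathbf q\in(H_2\cap Q)\setminus E_2$ would, via convexity and the matching of supporting cones $C(\mathbf v_1)-\mathbf v_1=\mathbf v_2-C(\mathbf v_2)$, force a second edge at $\mathbf v_1$ parallel to $\overline{\mathbf q\mathbf v_2}$ inside $H_1$, which cannot happen. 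Once these edge-supporting parallel hyperplanes exist, the translate argument is one line — if $\mathbf t\notin H_1$ then one of $\mathbf e_1+\mathbf t$, $\mathbf e_2+\mathbf t$ leaves the slab between $H_1$ and $H_2$ — exactly parallel to the sumset case. So the concern you raise at the end of your proposal is the real issue, but the tool that resolves it is the edge-level supporting-hyperplane lemma, not a vertex-level functional; you would need to prove that lemma, and your sketch does not.
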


\begin{proof}
The proof proceeds similarly to that of Lemma \ref{lemma:edgeSumSet}.  We begin by showing that
\begin{align} \label{eq:edgeDiffs}
(A-A) \cap (E_2 - E_1) = (A\cap E_2) - (A\cap E_1).
\end{align}
That $(A\cap E_2) - (A\cap E_2) \subset (A-A)\cap (E_2-E_1)$ is immediate, so we need only show the forward inclusion.  Let $\mathbf{e_1}\in E_1$, $\mathbf{e}_2 \in E_2$.  It suffices to show that if $\mathbf{t}\in\mathbb{R}^D$ and $\mathbf{e}_1+\mathbf{t},\mathbf{e}_2+\mathbf{t}\in Q$, then $\mathbf{e}_1+\mathbf{t}\in E_1$ and $\mathbf{e}_2+\mathbf{t}\in E_2$.

We first show that there exists a pair of parallel supporting hyperplanes $H_1$ and $H_2$ of $Q$ such that $H_1 \cap Q = E_1$ and $H_2 \cap Q = E_2$.  Let $H_1$ be a supporting hyperplane of $Q$ such that $H_1 \cap Q = E_1$, and let $H_2$ be the parallel hyperplane that contains $E_2$.  Suppose there exists some point $\mathbf{q} \in (H_2 \cap Q) \setminus E_2$.  By the convexity of $Q$, we then have that the line segment $\overline{\mathbf{qv}_2}$ is also contained in $H_2$.  Since $\overline{\mathbf{qv}_2}$ cannot be parallel to $E_1$, we have by the local point symmetry of $Q$ that $\overline{\mathbf{qv}_2}$ cannot be an edge of $Q$---otherwise, $H_1 \cap Q$ should contain another edge besides $E_1$ that contains $\mathbf{v}_1$ and is parallel to $\overline{\mathbf{qv}_2}$.  It is not hard to show then that there is some edge of $Q$ other than $E_2$ that is contained in the half-space of $H_2$ that does not contain $E_1$.  By the local point symmetry of $Q$, there must be some corresponding parallel edge of $Q$ other than $E_1$ that is contained in the half-space of $H_1$ that does not contain $E_2$.  As this is not the case, we have that $H_2 \cap Q = E_2$.

Now let $V_1$ denote the closed half-space formed by $H_1$ that contains $Q$, and $V_2$ the closed half-space formed by $H_2$ that contains $Q$.  Note that if a translation vector $\mathbf{t}\in\mathbb{R}^D$ does not lie in $H_1$ (or $H_2$), then either $\mathbf{e}_1+\mathbf{t}\not\in V_1$ or $\mathbf{e}_2+\mathbf{t}\not\in V_2$.  Thus if $\mathbf{e}_1+\mathbf{t},\mathbf{e}_2+\mathbf{t}\in Q$, then $\mathbf{t}\in\mathbb{R}^D$ must lie in $H_1$.  Then $\mathbf{e}_1+\mathbf{t}\in H_1$ and $\mathbf{e}_2+\mathbf{t}\in H_2$.  Since $H_1\cap Q = E_1$ and $H_2\cap Q = E_2$, it follows that $\mathbf{e}_1+\mathbf{t}\in E_1$ and $\mathbf{e}_2+\mathbf{t}\in E_2$, and thus \eqref{eq:edgeDiffs} follows.

We now prove the lemma.  We can write $T_{E_1}(x) = M(x)+\mathbf{b}_1$ and $T_{E_2}(x) = M(x) + \mathbf{b}_2$ for all $x\in\R$, where $M:\R\to\R^D$ is an injective linear transformation and $\mathbf{b}_1,\mathbf{b}_2\in\R^D$ are translation vectors.  Define $T_{E_2-E_1}:\R\to\R^D$ such that $T_{E_2-E_1}(x) = M(x) + (\mathbf{b}_2-\mathbf{b}_1)$ for all $x\in\R$.  Since $M$ is injective and linear, $T_{E_2-E_1}$ is injective and affine.  By \eqref{eq:edgeDiffs},
\begin{align}
(A-A)\cap (E_2-E_1)\ &=\ (A\cap E_2) - (A\cap E_1) \nonumber \\
                     &=\ T_{E_2}(S_2) - T_{E_1}(S_1) \nonumber \\
                     &=\ (M(S_2) + \mathbf{b}_2) - (M(S_1) + \mathbf{b}_1) \nonumber \\
                     &=\ M(S_2-S_1) + (\mathbf{b}_2-\mathbf{b}_1) \nonumber \\
                     &=\ T_{E_2-E_1}(S_2-S_1),
\end{align}
as desired.
\end{proof}

\begin{definition}
Given a set $S\in\R^D$, a difference vector $\mathbf{k}\in S-S$ is \emph{uniquely formed} if there exists a unique pair of elements $\mathbf{s}_1, \mathbf{s}_2 \in S$ satisfying $\mathbf{s}_1 - \mathbf{s}_2 = \mathbf{k}$.
\end{definition}

The remainder of this section is devoted to proving Lemma \ref{lemma:uniqueDiff}, which asserts that there are many (at least on the order of $n$) uniquely formed differences in $nP-nP$ if $P$ is not locally point symmetric.  By contrast, if $P$ is locally point symmetric, then the number of uniquely formed differences in $nP-nP$ is constant, as we will show in Lemma \ref{lemma:verticesofdiffset}.

Showing Lemma \ref{lemma:uniqueDiff} requires a brief review of geometry.  In the following definitions, let $Q$ be a convex polytope in $\R^D$.  Further assume that $Q$ is $D$-dimensional---that is, the smallest affine subspace containing $Q$ is $\R^D$.

\begin{definition}
Given vectors $\mathbf{x}_1, \mathbf{x}_2, \dots, \mathbf{x}_m \in \R^D$, a \emph{conical combination} of these vectors is a vector of the form $\alpha_1\mathbf{x}_1 + \alpha_2\mathbf{x}_2 + \dots + \alpha_m\mathbf{x}_m$ where $\alpha_i \ge 0$ for all $1\le i\le m$.  The \emph{polyhedral cone} generated by vectors $\mathbf{x}_1, \mathbf{x}_2, \dots, \mathbf{x}_m$ is the set of all conical combinations of $\mathbf{x}_1, \mathbf{x}_2, \dots, \mathbf{x}_m$.  
\end{definition}

\begin{definition}
Let $\mathbf{v}$ be a vertex of $Q$, and let $\mathbf{n}_1, \dots , \mathbf{n}_t$ denote outward-pointing normal vectors of all facets of $Q$ that contain $\mathbf{v}$.  The \emph{normal cone} $N(\mathbf{v})$ of $Q$ at $\mathbf{v}$ is the polyhedral cone generated by $\mathbf{n}_1, \dots , \mathbf{n}_t$.
\end{definition}

Note that normal cones have their apexes at the origin of $\mathbb{R}^d$, while supporting cones have their apexes at the vertices of the polytope.

Suppose $Q$ has vertices $\mathbf{v}_1, \mathbf{v}_2, \dots, \mathbf{v}_m$.  The following properties of normal cones $N(\mathbf{v}_i)$ are easily verified:
\begin{enumerate}
\item For each vertex $\mathbf{v}_i$, the normal cone $N(\mathbf{v}_i)$ is the set of outward normal vectors (of arbitrary length) to all supporting hyperplanes of $Q$ that contain $\mathbf{v}_i$.
\item Vertices $\mathbf{v}_i$ and $\mathbf{v}_j$ are strictly antipodal if and only if the interiors of $N(\mathbf{v}_i)$ and $-N(\mathbf{v}_j)$ have non-empty intersection.
\item For $i\not=j$, the interiors of $N(\mathbf{v}_i)$ and $N(\mathbf{v}_j)$ are disjoint.
\item For $i\not=j$, the intersection of $N(\mathbf{v}_i)$ and $N(\mathbf{v}_j)$ is either $\{\mathbf{0}\}$ or a facet of both cones.
\item $\bigcup_{i=1}^m N(\mathbf{v}_i) = \R^D$.
\end{enumerate}

We now introduce a useful result that follows easily from the work of Nguy\^{e}n and Soltan \cite{NS}.  We provide the details in Appendix \ref{appendix:A}.


\begin{lemma} \label{lemma:saVertices}
Let $Q$ be a $D$-dimensional polytope with $m$ vertices in $\R^D$.  Then $Q$ is locally point symmetric if and only if $Q$ has exactly $m/2$ pairs of strictly antipodal vertices.
\end{lemma}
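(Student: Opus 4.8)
The plan is to reduce the statement to a counting comparison between the combinatorics of the normal fan of $Q$ and the defining condition of local point symmetry, using the listed properties of normal cones together with the Nguy\^{e}n--Soltan result (deferred to Appendix~\ref{appendix:A}). The forward direction is essentially immediate: if $Q$ is locally point symmetric, then by Definition the $m$ vertices are partitioned into $m/2$ pairs $\{\mathbf{u},\mathbf{v}\}$ of strictly antipodal vertices, so $Q$ has \emph{at least} $m/2$ such pairs. To see there cannot be more, use property (2): a pair $\mathbf{v}_i,\mathbf{v}_j$ is strictly antipodal iff $\mathrm{int}\,N(\mathbf{v}_i)\cap \mathrm{int}(-N(\mathbf{v}_j))\neq\varnothing$. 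If some vertex $\mathbf{v}_i$ were strictly antipodal to two distinct vertices $\mathbf{v}_j,\mathbf{v}_k$, then $\mathrm{int}(-N(\mathbf{v}_i))$ would meet both $\mathrm{int}\,N(\mathbf{v}_j)$ and $\mathrm{int}\,N(\mathbf{v}_k)$; but when $Q$ is locally point symmetric, the condition $C(\mathbf{u})-\mathbf{u} = \mathbf{v}-C(\mathbf{v})$ translates (via the duality between supporting cones and normal cones) into $-N(\mathbf{v}_i)$ being \emph{equal} to $N(\mathbf{v}_{j})$ for its antipodal partner, so its interior lies in a single normal cone's interior, and by property (3) it cannot meet a second one. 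Hence the pairing is unique and there are exactly $m/2$ strictly antipodal pairs.

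For the reverse direction, suppose $Q$ has exactly $m/2$ pairs of strictly antipodal vertices. The first task is to argue that the strict-antipodality relation is then a perfect matching on the vertex set: each vertex lies in at least one such pair (else one could find two vertices sharing no antipodal partner, and a short argument using property (5), $\bigcup_i N(\mathbf{v}_i)=\R^D$, together with property (2) forces extra antipodal pairs, contradicting the count $m/2$), and each vertex lies in at most one (two partners for a single vertex would again push the total above $m/2$). This is exactly where the Nguy\^{e}n--Soltan input enters: their result characterizes when the number of strictly antipodal vertex pairs is maximal/forces the fan to be centrally symmetric, and I would cite the precise form proved in the appendix to conclude that $-N(\mathbf{v}_i)=N(\mathbf{v}_{\sigma(i)})$ for the matching partner $\mathbf{v}_{\sigma(i)}$. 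Once we have $-N(\mathbf{v}_i) = N(\mathbf{v}_{\sigma(i)})$ for every $i$, we translate back through the normal-cone/supporting-cone duality: the supporting cone $C(\mathbf{v})$ with apex moved to the origin, $C(\mathbf{v})-\mathbf{v}$, is the polar dual of $N(\mathbf{v})$, so $-N(\mathbf{v}_i) = N(\mathbf{v}_{\sigma(i)})$ dualizes to $-(C(\mathbf{v}_i)-\mathbf{v}_i) = C(\mathbf{v}_{\sigma(i)})-\mathbf{v}_{\sigma(i)}$, i.e. $C(\mathbf{v}_i)-\mathbf{v}_i = \mathbf{v}_{\sigma(i)} - C(\mathbf{v}_{\sigma(i)})$, which is precisely the condition in the definition of local point symmetry.

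The main obstacle I anticipate is making the "perfect matching" step of the reverse direction fully rigorous without circularity: I need to deduce that exactly $m/2$ strictly antipodal pairs forces the antipodality relation to be a matching \emph{and} forces the stronger cone-equality $-N(\mathbf{v}_i)=N(\mathbf{v}_{\sigma(i)})$, rather than merely overlapping interiors. Property (2) only gives interior overlap, and in principle a vertex could be "weakly" antipodal to several others while the count of \emph{strict} pairs stays small; ruling this out is exactly the content of the Nguy\^{e}n--Soltan theorem, so the burden is to state that theorem in a form strong enough to yield the cone equality and to verify in Appendix~\ref{appendix:A} that their hypotheses apply to an arbitrary $D$-dimensional lattice polytope $Q$. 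The polar-duality bookkeeping (apex translations, the fact that $C(\mathbf{v})-\mathbf{v}$ and $N(\mathbf{v})$ are mutually polar) is routine and I would state it without detailed proof. A secondary check is the degenerate-looking but harmless fact that a vertex is never strictly antipodal to itself, so the $m/2$ count is well-posed; this follows since $\mathrm{int}\,N(\mathbf{v}_i)\cap\mathrm{int}(-N(\mathbf{v}_i))\neq\varnothing$ would force $N(\mathbf{v}_i)$ to contain a line, impossible for a pointed normal cone of a full-dimensional polytope.
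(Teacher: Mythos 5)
Your forward direction is a nice self-contained argument via the polar duality between supporting cones and normal cones, and it is correct: if $C(\mathbf{u})-\mathbf{u} = \mathbf{v}-C(\mathbf{v})$, then passing to polar cones gives $N(\mathbf{u}) = -N(\mathbf{v})$, and then properties (2)--(3) force each vertex to have a unique strictly antipodal partner. The paper does not argue this way; it simply reads off both implications from the cited Nguy\^{e}n--Soltan theorems, so your version is a genuine (and arguably cleaner) alternative for this half, since it avoids the external citation entirely. One small loose end: the claim ``two partners would push the total above $m/2$'' is not immediate on its own; it becomes correct once you combine it with the fact that every vertex has at least one strictly antipodal partner and apply handshaking ($\sum_i \deg(\mathbf{v}_i) = 2s(Q) = m$ with each $\deg(\mathbf{v}_i)\ge 1$ forces all degrees to equal~$1$).

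For the reverse direction there is a genuine gap, and it is exactly where you flagged your own uncertainty. The Nguy\^{e}n--Soltan theorem that gives the cone-equality characterization of $s(Q)=m/2$ --- Theorem~\ref{thm:NS3} in the appendix --- is stated only for $D\ge 3$. For $D=2$ the appendix has to fall back on a different result of theirs, Theorem~\ref{thm:NS1}, which says $s(Q) = m-k$ where $k$ is the number of pairs of parallel sides. So $s(Q)=m/2$ for a polygon is equivalent to $k=m/2$ (every side is paired with a parallel partner), and one then has to supply an elementary geometric argument to convert ``all sides occur in parallel pairs'' into the supporting-cone equality defining local point symmetry: one shows each pair of parallel sides lies on a pair of supporting lines meeting $Q$ exactly in those sides, deduces that the corresponding endpoints are strictly antipodal, and from the cyclic structure of the parallel-side pairing reads off $C(\mathbf{u})-\mathbf{u}=\mathbf{v}-C(\mathbf{v})$. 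Your proposal treats the Nguy\^{e}n--Soltan input as a single uniform theorem, which it is not, and does not supply the bridge from parallel sides to cone equality that the $D=2$ case requires. (The case $D=1$ is trivial, since an interval's two vertices form one strictly antipodal pair and its supporting cones are opposite half-lines.) A secondary, non-fatal point: once you do invoke Theorem~\ref{thm:NS3} for $D\ge 3$, the normal-cone detour in your reverse direction is unnecessary --- that theorem is stated directly in terms of $C(\mathbf{u})-\mathbf{u}=\mathbf{v}-C(\mathbf{v})$, which is verbatim the definition of local point symmetry, so nothing needs to be dualized.
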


We are now ready to prove Lemma \ref{lemma:uniqueDiff}.

\begin{lemma} \label{lemma:uniqueDiff}
Let $Q$ be a convex polytope in $\mathbb{R}^D$ that is not locally point symmetric.  Then there is a vertex $\mathbf{v}$ and an edge $E$ of $Q$ such that, for all $\mathbf{e}\in E$, the difference vectors $\kk = \mathbf{e}-\mathbf{v}$ and $-\kk$ are uniquely formed.
\end{lemma}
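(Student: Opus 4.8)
The plan is to locate a vertex $\mathbf{v}$ of $Q$ whose failure of local point symmetry is ``visible'' along one of its edges, and then show that differences pointing from $\mathbf{v}$ into that edge can only be realized by the obvious pair. Concretely, since $Q$ is not locally point symmetric, Lemma \ref{lemma:saVertices} tells us $Q$ does not have $m/2$ pairs of strictly antipodal vertices; equivalently, using properties (2)--(5) of normal cones, there is some vertex $\mathbf{v}$ for which $-N(\mathbf{v})$ is not a single normal cone $N(\mathbf{v}')$ but instead meets the interiors of the normal cones of several distinct vertices (or meets a normal cone properly). I would pin down a supporting hyperplane $H$ with $H \cap Q = \{\mathbf{v}\}$ (so $\mathbf{v}$ is a vertex whose normal cone has nonempty interior, which every vertex does) chosen so that the \emph{opposite} supporting direction $-\mathbf{n}$, where $\mathbf{n}$ is the outward normal of $H$, lies in the interior of $N(\mathbf{w})$ for a vertex $\mathbf{w}$ that is \emph{not} antipodally paired with $\mathbf{v}$ in the required cone-reflection sense. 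The edge $E$ I want is an edge at $\mathbf{v}$ chosen so that, after translating, the whole edge $E$ still ``sees'' $\mathbf{v}$ as its unique preimage.

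The key steps, in order: \textbf{(i)} Reduce to the full-dimensional case: if $Q$ lies in a proper affine subspace, work inside that subspace, since uniqueness of difference representations is unaffected. \textbf{(ii)} Use the characterization via antipodal pairs to produce a vertex $\mathbf{v}$ and a supporting hyperplane $H$ at $\mathbf{v}$ with $H\cap Q=\{\mathbf{v}\}$ such that the parallel supporting hyperplane $H'$ on the opposite side meets $Q$ in a face $F$ of dimension $\ge 1$ (i.e.\ $\mathbf{v}$ is strictly antipodal to no single vertex via this direction, or the cone reflection fails); pick $E$ to be an edge of $Q$ at $\mathbf{v}$. \textbf{(iii)} Fix $\mathbf{e}\in E$ and suppose $\mathbf{p}-\mathbf{q} = \mathbf{e}-\mathbf{v}$ with $\mathbf{p},\mathbf{q}\in Q$. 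Write $\mathbf{t} = \mathbf{q}-\mathbf{v} = \mathbf{p}-\mathbf{e}$, so $\mathbf{v}+\mathbf{t}\in Q$ and $\mathbf{e}+\mathbf{t}\in Q$; mimic the half-space argument from the proof of Lemma \ref{lemma:edgeDiffSet}: because $H$ supports $Q$ with $H\cap Q=\{\mathbf{v}\}$ and $\mathbf{e}\in E\subset H$ as well (edges at $\mathbf{v}$ need not lie in $H$ — so here I instead use that $H$ cuts $\{\mathbf{v}\}$ alone, forcing $\mathbf{t}$ to point into the closed halfspace $V$ on the $Q$-side), conclude $\mathbf{v}+\mathbf{t}=\mathbf{v}$, i.e.\ $\mathbf{t}=\mathbf{0}$, hence $\mathbf{p}=\mathbf{e}$, $\mathbf{q}=\mathbf{v}$. \textbf{(iv)} The identical argument applied to $-\kk = \mathbf{v}-\mathbf{e}$ using the parallel supporting hyperplane through $\mathbf{e}$'s side — here we do need $E$ and the local cone geometry at $\mathbf{v}$ to rule out a second representation of $\mathbf{v}-\mathbf{e}$, which is exactly where the \emph{failure} of local point symmetry at $\mathbf{v}$ is used: if $-\kk$ had a second representation $\mathbf{p}'-\mathbf{q}'$, tracing $\mathbf{q}'-\mathbf{e}$ into the supporting cone $C(\mathbf{v})$ would force a configuration equivalent to $\mathbf{v}$ being strictly antipodal to a vertex with the matching reflected cone, contradicting our choice. \textbf{(v)} Conclude both $\kk$ and $-\kk$ are uniquely formed for every $\mathbf{e}\in E$.

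I expect step (iv) — establishing uniqueness of the reversed difference $-\kk = \mathbf{v}-\mathbf{e}$ — to be the main obstacle. The forward direction $\mathbf{e}-\mathbf{v}$ is easy because $H\cap Q = \{\mathbf{v}\}$ is a vertex, so any translate keeping $\mathbf{v}$ inside $Q$ is trivial; but for $\mathbf{v}-\mathbf{e}$ we are subtracting a point of the \emph{edge}, not the vertex, so a translation $\mathbf{t}$ with $\mathbf{e}+\mathbf{t}\in Q$ and $\mathbf{v}+\mathbf{t}$... wait, rather $\mathbf{q}'+\kk$-type conditions, can a priori slide along $E$ and beyond. The resolution is to choose $E$ and the ambient supporting hyperplane carefully using the normal-cone dictionary (properties (1)--(5)) together with Lemma \ref{lemma:saVertices}: precisely because $Q$ fails local point symmetry, there is a vertex $\mathbf{v}$ and an adjacent edge $E$ such that no translate of the closed halfspace cutting out $E$ at $\mathbf{v}$ can be ``reflected'' to a matching configuration, which forces the translation vector to be $\mathbf{0}$. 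Making this selection explicit, and checking it yields simultaneous uniqueness for all $\mathbf{e}\in E$ rather than just the endpoints, is the delicate part; the rest is a routine repetition of the supporting-hyperplane trapping argument already used for Lemma \ref{lemma:edgeDiffSet}.
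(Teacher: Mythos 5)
Your high-level plan — find parallel supporting hyperplanes $H_1,H_2$ with $H_1\cap Q=\{\mathbf{v}\}$ and $H_2\cap Q$ an edge $E$, then trap the translation vector — is indeed the paper's plan. But as written, your steps (ii)--(iv) contain a genuine confusion about where the edge $E$ lives, and the trapping argument you give is incomplete; both would need to be fixed for the proof to go through.

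In step (ii) you say ``pick $E$ to be an edge of $Q$ at $\mathbf{v}$.'' This cannot be right: if $E$ is incident to $\mathbf{v}$, then $\mathbf{e}-\mathbf{v}$ for $\mathbf{e}\in E$ is a short vector along that edge, and such differences are typically \emph{not} uniquely formed (you can slide the pair along $E$). The edge $E$ in the lemma must be the face cut out by the \emph{opposite} supporting hyperplane $H_2$, i.e.\ $E\subset H_2$ with $\mathbf{v}$ and $E$ on opposite sides of $Q$. Your own step (iii) betrays the confusion: you first write $\mathbf{e}\in E\subset H$ (false — if $H\cap Q=\{\mathbf{v}\}$ then no edge lies in $H$), notice this, and then fall back to ``$H$ cuts $\{\mathbf{v}\}$ alone, forcing $\mathbf{t}$ to point into the closed halfspace $V$ on the $Q$-side, conclude $\mathbf{v}+\mathbf{t}=\mathbf{v}$.'' That conclusion does not follow: $\mathbf{t}$ pointing into the $Q$-side halfspace is vacuous (any $\mathbf{t}$ with $\mathbf{v}+\mathbf{t}\in Q$ satisfies it), and says nothing about $\mathbf{t}=\mathbf{0}$. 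To trap $\mathbf{t}$ you must use \emph{both} parallel hyperplanes: $\mathbf{v}\in H_1$, $\mathbf{e}\in H_2$, and $\mathbf{v}+\mathbf{t},\mathbf{e}+\mathbf{t}\in Q$ force $\mathbf{t}$ to be parallel to $H_1$ and $H_2$; then $\mathbf{v}+\mathbf{t}\in H_1\cap Q=\{\mathbf{v}\}$ gives $\mathbf{t}=\mathbf{0}$. Your step (iii) never invokes $H_2$ at all.

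Step (iv) is a red herring: $\kk$ and $-\kk$ are uniquely formed simultaneously by definition (swap the roles of the two points in the pair), so there is no separate obstacle there — the paper simply reformulates both at once as ``no nonzero $\mathbf{t}$ with $\mathbf{v}+\mathbf{t},\mathbf{e}+\mathbf{t}\in Q$.'' The real content you've underdeveloped is step (ii): \emph{producing} the pair $(\mathbf{v},E)$ with the required parallel supporting hyperplanes. You gesture at ``the characterization via antipodal pairs,'' but the concrete argument (which the paper supplies) is: by Lemma \ref{lemma:saVertices} some vertex $\mathbf{v}$ is strictly antipodal to two vertices $\mathbf{u}_1,\mathbf{u}_2$; by property (3) of normal cones, $N(\mathbf{v})$ cannot lie inside $-N(\mathbf{u}_1)$; hence the interior of $N(\mathbf{v})$ meets the interior of a \emph{facet} of $-N(\mathbf{u}_1)$, and that facet is the set of normals of supporting hyperplanes meeting $Q$ in an edge $E$ incident to $\mathbf{u}_1$. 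Choosing a normal direction in that interior intersection produces $H_1,H_2$. Without this step the proof has not identified a valid $(\mathbf{v},E)$.
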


\begin{proof}
The difference vectors $\kk = \mathbf{e}-\mathbf{v}$ and $-\kk$ are uniquely formed if and only if there exists no non-zero vector $\mathbf{t}\in\R^D$ such that $\mathbf{e}+\mathbf{t}, \mathbf{v}+\mathbf{t} \in Q$.  To show that such a vertex $\mathbf{v}$ and an edge $E$ exist, it suffices to show that there exists a pair of parallel supporting hyperplanes $H_1$ and $H_2$ of $Q$ such that $H_1 \cap Q = \{\mathbf{v}\}$ and $H_2 \cap Q = E$.  This is clear because, for any translation by a vector $\mathbf{t}\in\R^D$ of $\mathbf{v}$ and some $\mathbf{e}\in E$, we must have that $\mathbf{t}$ is parallel to $H_1$ and $H_2$ if $\mathbf{e}+\mathbf{t}$ and $\mathbf{v}+\mathbf{t}$ are to remain in the closed space bounded by $H_1$ and $H_2$.  But $H_1\cap Q = \{\mathbf{v}\}$, and therefore it must be that $\mathbf{t} = \mathbf{0}$ if $\mathbf{v}+\mathbf{t}\in Q$.  See Figure \ref{fig:uniqueDiff} for an illustration.

\begin{figure}[b]
\centering
\begin{tikzpicture}[line cap=round,line join=round,>=triangle 45,x=1.0cm,y=1.0cm]
\clip(-0.42,-0.46) rectangle (8.18,6.46);
\draw (0.56,1.98)-- (4.94,5.62)-- (6.88,3.4)-- (5.04,1.12)-- (0.56,1.98);
\draw [domain=-0.42:8.18] plot(\x,{(--21.87-2.22*\x)/1.94});
\draw [domain=-0.42:8.18] plot(\x,{(--5.08-2.22*\x)/1.94});
\draw [->] (0.56,1.98) -- (5.59,4.88);
\begin{scriptsize}
\fill[color=black] (0.56,1.98) circle (1.5pt);
\draw[color=black] (0.3,1.72) node {$\mathbf{v}$};
\draw[color=black] (6.4,4.5) node {$E$};
\fill [color=black] (5.59,4.88) circle (1.5pt);
\draw[color=black] (5.85,5.1) node {$\mathbf{e}$};
\draw[color=black] (3.9,3.5) node {$\mathbf{k}$};
\draw (1.8,0) node {$H_1$};
\draw (7.4,2.3) node {$H_2$};
\end{scriptsize}
\end{tikzpicture}
\caption{\small \label{fig:uniqueDiff} A quadrilateral $Q$ that is not locally point symmetric.  The parallel lines $H_1$ and $H_2$ support $Q$ precisely at $\mathbf{v}$ and at $E$, respectively.  For any non-zero translation vector $\mathbf{t}$, either $\mathbf{v}+\mathbf{t}\not\in Q$ or $\mathbf{e}+\mathbf{t}\not\in Q$, and the difference vector $\mathbf{k} = \mathbf{e}-\mathbf{v}$ is uniquely formed.}
\end{figure}
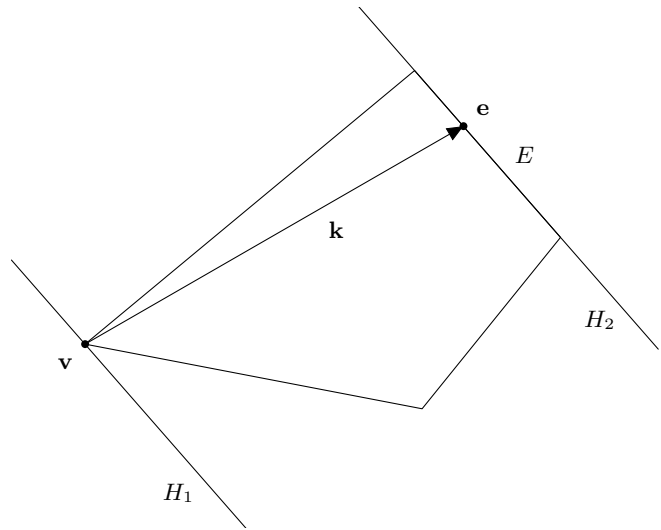

First assume that $Q$ is $D$-dimensional, with $m$ vertices.  As $Q$ is not locally point symmetric, and every vertex of a convex polytope is strictly antipodal with at least one other vertex
, it follows by Lemma \ref{lemma:saVertices} that the number of pairs of strictly antipodal vertices is strictly greater than $m/2$.  Then there exists some vertex $\mathbf{v}$ of $Q$ that is strictly antipodal with at least two other vertices.  Let $\mathbf{u}_1$ and $\mathbf{u}_2$ denote two such vertices.  By property (2) above, the interiors of $N(\mathbf{v})$ and $-N(\mathbf{u}_1)$ have non-empty intersection, as do the interiors of $N(\mathbf{v})$ and $-N(\mathbf{u}_2)$.  For the sake of contradiction, suppose that $N(\mathbf{v})$ is contained in $-N(\mathbf{u}_1)$.  Reflection through the origin is injective, and the interiors of $N(\mathbf{u}_1)$ and $N(\mathbf{u}_2)$ are disjoint by property (3) above, so it follows that the interiors of $N(\mathbf{v})$ and $-N(\mathbf{u}_2)$ are disjoint---a contradiction.  Thus, $N(\mathbf{v})$ cannot be contained in $-N(\mathbf{u}_1)$.

As the interiors of $N(\mathbf{v})$ and $-N(\mathbf{u}_1)$ still have non-empty intersection, it is not hard to show that the interior of some facet $F$ of $-N(\mathbf{u}_1)$ has non-empty intersection with the interior of $N(\mathbf{v})$.  Now note that $F$ is also a facet of the cone $-N(\mathbf{u}')$ for some vertex $\mathbf{u}'$ that is connected to $\mathbf{u}_1$ by an edge---we let $E$ denote this edge.  Further note that $F$ is set of (inward-pointing) normal vectors of supporting hyperplanes $H$ of $Q$ that satisfy $H\cap Q = E$.  In other words, there exist parallel supporting hyperplanes $H_1$ and $H_2$ of $Q$ such that $H_1 \cap Q = \{\mathbf{v}\}$ and $H_2\cap Q = E$, as desired.


If $Q$ is not $D$-dimensional---that is, the dimension of the affine hull of $Q$ is some $D'<D$---then we can define some injective affine transformation $T:\text{aff}(Q) \to \R^{D'}$ from the affine hull of $Q$ to $\R^{D'}$.  As affine transformations preserve parallel lines, the image polytope $Q' = T(Q)$ is also not locally point symmetric.  It is not hard to show that a difference vector $\mathbf{q}'_1 - \mathbf{q}'_2 \in Q'-Q'$ is uniquely formed if and only if $T^{-1}(\mathbf{q}'_1) - T^{-1}(\mathbf{q}'_2) \in Q - Q$ is uniquely formed.  Thus, we can prove the lemma for $Q$ by applying the argument above to $Q'$.
\end{proof}

\section{Middle Sums and Differences}

Let $k < n/2$, let $A \subset \{0,\dots,n\}$, and define sets $L := A\cap [0,k]$ and $U:= A\cap [n-k,n]$.  It is easy to see that
\begin{align}
(A+A)\cap([0,k]\cup[2n-k,2n])\ &\subset\ (L+L)\cup(U+U), \nonumber\\
(A-A)\cap([-n,-n+k]\cup[n-k,n])\ &\subset\ (L-U)\cup (U-L).
\end{align}
In other words, the sums and differences within radius $k$ of the endpoints of the potential sumset and potential difference set, respectively, are formed entirely by the fringe elements within radius $k$ of the endpoints of the base set $\{0,\dots,n\}$.  Martin and O'Bryant exploit this idea in \cite{MO} by fixing the fringe of $A$ such that $A+A$ necessarily has more elements at its ends than does $A-A$.  They then show that with high probability, all `middle' sums and differences are present in the sumset and difference set.

The same idea extends to higher-dimensional convex polytopes.  Given an arbitrary convex polytope $Q$ and some $r>0$, define sets
\begin{align}
B_r(Q) &\ := \  \{ \mathbf{q}\in L(Q) : \text{$d(\mathbf{q},\mathbf{v}) \le r$ for some vertex $\mathbf{v}$ of $Q$}\}, \nonumber\\
M_r(Q) &\ := \  L(Q) \setminus B_r(Q),
\end{align}
where $d(\cdot ,\cdot )$ denotes the Euclidean metric.  In words, $B_r(Q)$ is the set of lattice points contained in the union of balls of radius $r$ centered at the vertices of $Q$, while $M_r(Q)$ consists of all other `middle' lattice points.  It is easy to show that for any $A\subset L(nP)$,
\begin{align}
(A+A) \cap B_r(nP+nP)\ &\subset\ (A \cap B_r(nP)) + (A \cap B_r(nP)), \nonumber\\
(A-A) \cap B_r(nP-nP)\ &\subset\ (A \cap B_r(nP)) - (A \cap B_r(nP)).
\end{align}
Thus, we can precisely control the fringe of the sumset and difference set---$(A+A)\cap B_r(nP+nP)$ and $(A-A)\cap B_r(nP-nP)$---by carefully fixing $A \cap B_r(nP)$, the fringe of $A$.  Importantly, we choose $r$ independently of the dilation factor $n$, fixing a constant number of points as $n$ grows.

We refer to any other possible sum---that is, an element of $M_r(nP+nP)$---as a \emph{middle sum}, and any other possible difference---that is, an element of $M_r(nP-nP)$---as a \emph{middle difference}.  If we can show that all middle sums and all middle differences are present with positive probability, then we have a positive proportion of subsets $A\subset L(nP)$ that satisfy some precise condition on the cardinalities of their sumsets and difference sets.  The purpose of this section is to show that this is true if the fringe is large enough and, in the case of middle differences, if and only if $P$ is locally point symmetric.

\begin{proposition} \label{prop:middleSums}
Let $0<p^+<1$ be given.  Then there exists some $r>0$ such that for all sufficiently large $n$, the following holds:\ \ Let $F_r\subset B_r(nP)$, and let $A$ be uniformly randomly chosen from all subsets $S\subset L(nP)$ such that $S\cap B_r(nP) = F_r$.  Then $M_r(nP+nP) \subset A+A$ with probability at least $p^+$.
\end{proposition}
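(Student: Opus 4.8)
The plan is a first-moment bound over the middle sums, in the style of Martin--O'Bryant. Since $P$ is convex we have $nP+nP=(2n)P$, so a middle sum is simply a lattice point $\kk\in(2n)P$ with $\phi(\kk):=\min_{\mathbf v}\,d(\kk,2n\mathbf v)>r$, the minimum taken over the vertices $\mathbf v$ of $P$. Write $A=F_r\cup R$, where $R$ is a uniformly random subset of $M_r(nP)$ (each middle lattice point included independently with probability $\tfrac12$). By the union bound it suffices to choose $r=r(p^+)$ so that
\begin{align}
\sum_{\kk\in M_r(nP+nP)}\Pr\big[\kk\notin A+A\big]\ \le\ 1-p^+
\end{align}
for all sufficiently large $n$.

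For a single middle sum $\kk$ I would produce a family of pairwise \emph{disjoint} representations $\kk=\mathbf a_i+\mathbf b_i$ --- disjoint in the sense that no lattice point occurs twice across the family. The representations of $\kk$ correspond, up to interchanging the two summands, to the lattice points $\mathbf w$ of the centrally symmetric convex body $(nP-\tfrac{\kk}{2})\cap(\tfrac{\kk}{2}-nP)$ via $\{\mathbf a,\mathbf b\}=\{\tfrac{\kk}{2}+\mathbf w,\ \tfrac{\kk}{2}-\mathbf w\}$, so distinct $\mathbf w$ automatically give disjoint pairs; it only remains to count these lattice points and record which summands land in the fixed fringe $B_r(nP)$ and which in $M_r(nP)$. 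Each representation is ``realized'' (both summands in $A$) independently of the others, with probability $1$ if both summands lie in $F_r$, at least $\tfrac12$ if one lies in $F_r$ and the other in $M_r(nP)$, and $\tfrac14$ if both lie in $M_r(nP)$. Hence $\Pr[\kk\notin A+A]\le (3/4)^{t(\kk)}$, where $t(\kk)$ is the size of any such disjoint family of useful representations, and this probability is $0$ if one representation has both summands in $F_r$.

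I would then split the sum by $\phi(\kk)$, using the supporting cone $C(\mathbf v)$ at the nearest vertex (along which, once $n$ is large, $nP$ and $(2n)P$ agree with $C(\mathbf v)$). \emph{(i)} If $\phi(\kk)\le 2r$: taking a lattice point near the midpoint of $\kk$ and $2n\mathbf v$ inside $C(\mathbf v)$ gives a representation with both summands in $B_r(nP)$, so --- using the fixed near-vertex elements of $F_r$ --- such $\kk$ contribute $0$. \emph{(ii)} If $2r<\phi(\kk)<\rho_1(r)$ for a threshold $\rho_1(r)$ that is a polynomial in $r$: the cone picture yields $\ge c\,r$ disjoint representations pairing a fixed fringe element with a \emph{distinct} middle element (run a near-vertex summand over $\asymp r$ lattice points along the face of $C(\mathbf v)$ nearest $\kk$), so each such $\kk$ contributes $\le (1/2)^{cr}$; and there are only $O(\rho_1(r)^D)$ such $\kk$, a bound independent of $n$. \emph{(iii)} If $\phi(\kk)\ge\rho_1(r)$: counting lattice points of the body above and discarding the $O(r^D)$ that touch the fringe leaves $\ge c\,\phi(\kk)$ disjoint all-middle representations, so $\kk$ contributes $\le (3/4)^{c\phi(\kk)}$; combined with the shell estimate $\#\{\kk\in L((2n)P):\phi(\kk)\in[j,j+1)\}=O(j^{D-1})$ --- uniform in $n$ --- this gives $\sum_{j\ge\rho_1(r)}O(j^{D-1})(3/4)^{cj}=O\big((3/4)^{c\rho_1(r)}\big)$. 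Adding the three regimes, $\Pr[M_r(nP+nP)\not\subset A+A]\le O\big(\rho_1(r)^D\,2^{-cr}\big)+O\big((3/4)^{c\rho_1(r)}\big)$, which depends only on $r$ and tends to $0$ as $r\to\infty$; fix $r$ so large that it is below $1-p^+$, and then take $n$ large enough that the cone approximations hold and the balls of radius $\rho_1(r)$ about the $2n\mathbf v$ are pairwise disjoint.

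The main obstacle is the geometric representation count behind regimes (ii) and (iii): showing that every middle sum has enough pairwise-disjoint representations with controlled independence, uniformly in $n$ and in how $\kk$ sits relative to the faces meeting at its nearest vertex --- the delicate case being $\kk$ close to a vertex, where \emph{every} representation uses a fringe summand and one must extract the $\asymp r$ independent opportunities from the local cone $C(\mathbf v)$ together with the fixed fringe $F_r$. This forces a lattice-point count in (possibly thin) convex bodies built from the supporting cones, and it is also why the shell bound $\#\{\kk:\phi(\kk)\in[j,j+1)\}=O(j^{D-1})$ matters: without it the union bound would pick up a fatal factor of $|L((2n)P)|\asymp n^D$. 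The remaining ingredients --- $nP+nP=(2n)P$, independence of disjoint-representation events, reduction to the affine hull of $P$ when $P$ is not full-dimensional, and the routine estimate that a ball of radius $\rho$ contains $O(\rho^D)$ lattice points --- are straightforward.
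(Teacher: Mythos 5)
Your overall strategy is the same union-bound plus ``count disjoint representations'' argument, and your per-point bound $\Pr[\kk\notin A+A]\le(3/4)^{t(\kk)}$ matches the paper's Lemma~\ref{lemma:probMissingSum} exactly. Where you diverge is the decomposition of $M_r(nP+nP)$: the paper partitions by whether the representation count exceeds a logarithmic threshold $\Theta(\log n)$, proves via a compactness/subsequence argument (Lemma~\ref{lemma:tlogballs}) that the low-count region is contained in balls of radius $O(\log n)$ about the vertices of $nP+nP$, and then bounds the sum over each vertex's supporting cone by decomposing it into simplicial subcones, mapping each by a rational affine map onto the first orthant, and reducing to the convergent series $\sum_m \psi(m)x^m$. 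You instead stratify directly by the distance $\phi(\kk)$ to the nearest vertex of $(2n)P$ and pair a shell estimate $O(j^{D-1})$ with a claimed lower bound $t(\kk)\ge c\,\phi(\kk)$. This is a genuinely different (and arguably more transparent) decomposition: it avoids the somewhat artificial $\log n$ threshold and the compactness detour. What the paper's route buys is that it never needs a uniform lower bound of the form $t(\kk)\gtrsim\phi(\kk)$; it only needs convergence of a cone sum, and the rational-coordinates argument is precisely what makes the parallelotope lattice-point count go through.

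The gap is exactly where you flag it, and it is real rather than routine. The assertion that \emph{every} middle sum $\kk$ has at least $c\,\phi(\kk)$ pairwise-disjoint representations, uniformly in $n$ and in the position of $\kk$ relative to the faces of the nearest supporting cone, is the entire geometric content of the proposition, and it is not a calculation one can defer. The danger case is a $\kk$ for which $C(\mathbf v)\cap(\kk-C(\mathbf v))$ is a thin parallelotope aligned with a single cone edge: there one must exhibit $\gtrsim\phi(\kk)$ lattice points on a line segment inside the parallelotope, and this only works because $P$ has lattice vertices so the cone edges have rational direction with bounded denominator --- exactly the role played by the rational affine change of coordinates in the paper's cone argument. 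Your proposal does not produce this, and until it does, regimes (ii) and (iii) are not established. Two smaller points: the shell bound $\#\{\kk:\phi(\kk)\in[j,j+1)\}=O(j^{D-1})$ is fine uniformly in $n$ (finitely many vertices, each contributing a spherical shell), but you should note explicitly that it requires the balls of radius $\rho_1(r)$ about distinct vertices of $(2n)P$ to be handled separately; and the ``contribute $0$'' claim in regime (i), as well as the $\asymp r$ fringe-anchored representations in regime (ii), require $F_r$ to actually contain the relevant near-vertex lattice points, which the hypothesis does not guarantee --- this is a latent issue shared with the paper (its Lemma~\ref{lemma:probMissingSum} hides the same dependence inside the constant $c=(3/4)^{-l}$), and in applications $F_r$ is close to all of $B_r(nP)$, but as written your regime (i) is claiming more than follows.
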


\begin{proof}
We begin with a lemma bounding the probability that any individual middle sum is missing.

\begin{lemma} \label{lemma:probMissingSum}
Let $r>0$, and fix a fringe set $F_r \subset B_r(nP)$.   Let $A$ be chosen uniformly at random from all subsets $S\subset L(nP)$ such that $S\cap B_r(nP) = F_r$, and let $\kk\in M_r(nP+nP)$.  Then, for some constant $c > 0$ independent of $n$,
\begin{equation}
\mathbb{P}[\mathbf{k} \not\in A+A]\ \le\ c \left( \frac34 \right)^{ \left| L (nP \cap (\kk - nP)) \right|/2}.\end{equation}
\end{lemma}

\begin{proof}
The proof is similar to that of Lemma 5 in Martin and O'Bryant \cite{MO}.  Suppose we have $\x, \y \in nP$ such that $\x+\y=\kk$.  Then $\x = \kk - \y \in \kk - nP$, and similarly $\y \in \kk-nP$.  Then $L(nP \cap (\kk - nP))$ can be partitioned into distinct pairs of lattice points that add up to $\kk$, and the singleton $\{\kk/2\}$ if $\kk/2$ is a lattice point.  The probability that $\kk$ is missing in $A+A$ is then the product of the independent probabilities that in each pair, at least one point is missing.  Suppose that in our fixed fringe set $F_r$, exactly $l$ points are fixed as missing.  Then at most $l$ pairs contribute a probability of $1$, and the remaining pairs contribute a probability of at most $3/4$.  When $\kk/2$ is not a lattice point, there are $\left| L (nP \cap (\kk - nP)) \right|/2$ pairs total, which gives
\begin{align}
\mathbb{P}[\mathbf{k} \not\in A+A]\ \le\  \left( \frac34 \right)^{ \left| L (nP \cap (\kk - nP)) \right|/2 - l}.
\end{align}
Thus, we may take $c = (3/4)^{-l}$ and the lemma follows.  In the case where $\kk/2$ is a lattice point, a similar argument gives the same bound. \end{proof}

By the union bound, the probability that at least one middle sum is missing is at most the sum of the probabilities that each individual middle sum is missing.  Thus, to prove Proposition \ref{prop:middleSums}, it suffices to show
\begin{align} \label{eq:sumMissingSums}
\sum_{\kk \in M_r(nP+nP)} c \left( \frac34 \right)^{ \left| L (nP \cap (\kk - nP)) \right|/2}\ <\ 1 - p^+
\end{align}
for sufficiently large $n$ and $r$.

In the one-dimensional case, this amounts to making a tail of a geometric series as small as desired, which is done in \cite{MO}.  Unfortunately, in $D$ dimensions, the shape $nP \cap (\mathbf{k} - nP)$ can get quite complicated, and we must do more work.  The key idea is that when $\mathbf{k}$ is close to a vertex of $nP+nP$, the shape $nP \cap (\mathbf{k} - nP)$ is a parallelotope, which is quite simple.  Conversely, when $\mathbf{k}$ is not close to a vertex of $nP+nP$, we are saved by the fact that $nP \cap (\mathbf{k} - nP)$ is large, so we do not need to be careful about counting its lattice points.  See Figure \ref{fig:sumsetIntersection} for an illustration.

\begin{figure}
\begin{tikzpicture}[line cap=round,line join=round,>=triangle 45,x=0.55cm,y=.55cm]

\draw [step=0.5,thin,gray!40] (-1.4,-1.4) grid (10.9,8.9);
\draw [->] (-1.5,0) -- (11,0);
\draw [->] (0,-1.5) -- (0,9);

\draw 
(0.0,0.0)--
(5.0,0.0)-- 
(1.0,4.0)-- 
(0.0,0.0);

\draw (-0.5,-0.5) node {\tiny $A$};
\draw (5.4,-0.5) node {\tiny $B$};
\draw (0.6,4.2) node {\tiny $C$};

\draw 
(0.0,0.0)-- 
(2.0,8.0)-- 
(10.0,0.0)-- 
(0.0,0.0);

\draw (10.4,-0.5) node {\tiny $D$};
\draw (2.0,8.5) node {\tiny $E$};

\draw 
(4.0,3.0)-- 
(-1.0,3.0)-- 
(3.0,-1.0)-- 
(4.0,3.0);

\draw (4.4,3.4) node {\tiny $\kk$};
\fill (4.0,3.0) circle (1.5pt);

\draw [step=0.5,thin,gray!40] (16-2.9,-1.4) grid (16+10.9,8.9);
\draw [->] (13,0) -- (27,0);
\draw [->] (16,-1.5) -- (16,9);

\draw 
(16.0,0.0)-- 
(18.0,8.0)-- 
(26.0,0.0)-- 
(16.0,0.0);

\draw (16+10.4,-0.5) node {\tiny $D$};
\draw (16+2.0,8.5) node {\tiny $E$};

\draw 
(16.0,0.0)-- 
(17.0,4.0)-- 
(21.0,0.0)-- 
(16.0,0.0);

\draw (16-0.5,-0.5) node {\tiny $A$};
\draw (16+5.4,-0.5) node {\tiny $B$};
\draw (16+0.6,4.2) node {\tiny $C$};

\draw 
(18.5,7.0)-- 
(17.5,3.0)-- 
(13.5,7.0)-- 
(18.5,7.0);

\draw (18.9,7.4) node {\tiny $\kk$};
\fill (18.5,7.0) circle (1.5pt);

\end{tikzpicture}
\caption {\small \label{fig:sumsetIntersection} A triangle $T = \triangle ABC$ with its sumset $T+T = \triangle ADE$.  On the left, the sum $\kk$ is relatively far from the vertices of $T$.  Thus, while the shape of the intersection $T\cap (\kk-T)$ is hard to control, it fortunately contains many lattice points.  On the right, the sum $\kk$ is relatively close to a vertex of $T$, and hence $T\cap (\kk-T)$ is a parallelotope.}
\end{figure}
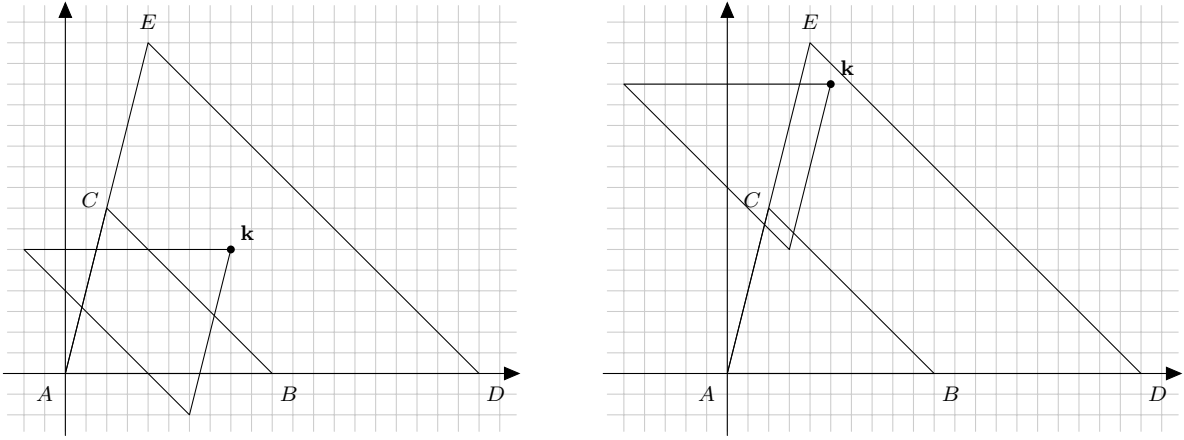

To evaluate the sum in \eqref{eq:sumMissingSums}, we partition $M_r(nP+nP)$ into two sets, a `center' set $C$ and an `intermediate' set $I$.  Fix some $\varepsilon_C > 0$.  We define $C$ as all points $\kk\in M_r(nP+nP)$ such that
\begin{align}
|L(nP\cap (\kk-nP))|\ >\ 2 \log_{3/4} \left(\frac{\varepsilon_C / c}{L(nP+nP)}\right).
\end{align}
The right hand side is constructed so that by Lemma \ref{lemma:probMissingSum},
\begin{align} \label{eq:Cbound}
\mathbb{P}[\kk \not\in A+A] \ <\ \frac{\varepsilon_C}{ |L(nP+nP)|}
\end{align}
for all $\kk\in C$.  We conclude that the sum of $\mathbb{P}[\kk \not\in A+A]$ over all $\kk$ in $C$ is at most $\varepsilon_C$, because $C \subseteq L(nP+nP)$.

We define $I$ to consist of the remaining points; that is, all points $\kk\in M_r(nP+nP)$ such that
\begin{align}
|L(nP\cap (\kk-nP))|\ \le\ 2 \log_{3/4} \left(\frac{\varepsilon_C / c}{|L(nP+nP)|}\right). \label{logn}
\end{align}
Note that the right hand side is $\Theta(\log n)$, while $|L(nP+nP)|$ is $\Theta(n^D)$.  Intuitively, the above definition suggests that all points $\kk \in I$ should lie close to the vertices of $nP+nP$, in order to make $|L(nP \cap (\kk - nP)|$ small.  The set $I$ will be located in between the fringe set $F_r$ and the center set $C$.  Since $\log n / n \to 0$ as $n \to \infty$, the intersection polytope $nP \cap (\mathbf{k} - nP)$ will be a parallelotope for all $\mathbf{k} \in I$, for $n$ large.  We formalize this idea in the following lemma.

\begin{lemma} \label{lemma:tlogballs}
There exists a constant $t > 0$ such that $I$ is contained in the union of balls of radius $t \log n$ around the vertices of $nP+nP$ for all $n$.
\end{lemma}

\begin{proof} To highlight the dependence of $I$ on $n$, we write $I(n)$.  Assume for the sake of contradiction this lemma is false.  Then for each $t$, there is some $n$ such that $I(n)$ is not contained in the union of balls of radius $t \log n$ centered at the vertices of $2nP$.  In particular, letting $t$ take on the value of every positive integer $m$, we have the following: For each positive integer $m$ there exists $\mathbf{k}_m$ and $n_m$ such that $\mathbf{k}_m \in I(n_m) $ but $\mathbf{k}_m$ has distance greater than $m \log n$ from each vertex of $2nP$.

For the next step, it is useful to visualize the polytope $P$ as fixed, and rather than dilating $P$ by a factor of $n$, we shrink the underlying lattice by a factor of $n$.

Consider the sequence $\mathbf{k}'_m = \mathbf{k}_m/m$.  Note that this sequence lies inside the polytope $P+P$, which is closed and bounded, so there is a convergent subsequence $\mathbf{k}'_{m_i}$.  Let $\mathbf{k}'$ denote the limit of this subsequence.\\ \

\textbf{Step 1.}  We claim that $\mathbf{k}'$ must be a vertex of $P+P$; that is, $P \cap (\kk' - P)$ is just one point (the point $\kk'/2$, a vertex of both $P$ and $\kk' - P$).

Consider the convex polytope $P \cap (\kk' - P)$, and suppose that it is not just a point.  Then it is a $d'$-dimensional polytope where $0 < d' \le D$, and furthermore it must lie in a $d'$-face of $P$ (and of $\kk' - P$).  Since $P$ has lattice point vertices, a $d'$-face of $P$ contains $\Theta(n^{d'})$ lattice points.  (Recall that $P$ stays fixed and the lattice shrinks.)

Let $A$ be the affine $d'$-dimensional subspace containing $P \cap (\kk' - P)$.  Relative to $A$, the point $\kk'/2$ is in the interior of $P \cap (\kk' - P),$ with some positive distance $\epsilon$ to all its bounding faces.  So let $B \subset A$ be a $d'$-dimensional ball centered at $\kk'/2$ with fixed radius $\epsilon/2$, so that $B \subset P \cap (\kk' - P)$.  Then $B$ contains $\Theta(n^{d'})$ lattice points.

We now show that $B \subset P \cap (\kk'_{m_i} - P)$ for $i$ large.   The fact that $\kk'_{m_i} \to \kk'$ as $i \to \infty$ is sufficient for this.  First, consider any hyperplane $H$ making an angle $\theta$ with $A$, and suppose $H$ is translated normally by a distance at most $\epsilon$.  Then the intersection $H \cap A$ is translated along $A$ by a distance at most $\epsilon / \sin \theta$. In particular, among all bounding hyperplanes of $\kk' - P$, there is a minimum nonzero angle $\theta_{{\rm min}}$ made with $A$, so that whenever $\kk'$ is translated by at most $\delta = \epsilon/(2 \sin \theta_{{\rm min}})$, the bounding faces of $P \cap (\kk' - P) \cap A$ are translated by at most $\epsilon / 2$.  Finally, for $i$ large, $|\kk'_{m_i} - \kk'| < \delta$, so $B \subset P \cap (\kk'_{m_i} - P)$, as desired.  Since $B$ contains $\Theta(n^{d'})$ lattice points, this directly contradicts that $\kk_{m_i} \in I(n)$, which says that $|L(P \cap (\kk'_{m_i} - P))| = O(\log n)$.  Thus $\kk'$ must in fact be a vertex of $P+P$, so that $P \cap (\kk' - P) = \{ \kk'/2 \}$. \\ \

\textbf{Step 2.}  Recall that $\kk'/2$ is a vertex of $P$.  For $i$ large, $\kk_{m_i}$ is so close to $\kk$ that $P \cap (\kk_{m_i}  - P) = C(\kk'/2) \cap (\kk'_{m_i} - C(\kk'/2)$, where $C(\kk'/2)$ denotes the supporting cone of $P$ at $\kk'/2$.  In other words, for $i$ large, only the local shape of $P$ at $\kk'/2$ matters: the only hyperplanes determining $P \cap (\kk'_{m_i}  - P)$ are those of $P$ at $\kk'/2$ and the corresponding hyperplanes in $\kk'_{m_i}  - P$.

This means that the shape $P \cap (\kk'_{m_i} - P)$ is quite simple, and the number of its lattice points will be easy to analyze.
Suppose $P \cap (\kk'_{m_i}  - P)$ is a $d'$-dimensional polytope.  Pick $d'$ edges of $P$ at $\kk'$, extend them to rays from $\kk'$, and call their convex hull $P'$.  Then $P' \cap (\kk'_{m_i} - P')$ is a parallelotope---as simple a shape as we could hope for.  Furthermore, that parallelotope is a subset of $P \cap (\kk'_{m_i}  - P)$, so it also has $O(\log n)$ lattice points.  Since each of these $d'$ edges has a lattice structure, the edges have length $O(\log n)$ as well, so the diameter of the parallelotope is $O(\log n)$.  
Thus $|\kk'_{m_i} - \kk'|$ is indeed $O(\log n)$. \end{proof}

Now we evaluate the sum in \eqref{eq:sumMissingSums} over points $\kk\in I$.  We sum around one vertex of $nP+nP$ at a time.  Let $\mathbf v$ be the current vertex, and let $I_\mathbf{v}$ be the portion of $I$ that lies in the ball of radius $t \log n$ about $\mathbf v$.

Since $\log n / n \to 0$ as $n \to \infty$, for $n$ large we have $(nP+nP) \cap B_{\mathbf v} = C(\mathbf v) \cap B_{\mathbf v}$, where $C(\mathbf v)$ denotes the supporting cone of $nP+nP$ at $\mathbf v$.  Henceforth, assume $n$ is this large.  Now the only relevant portion of $nP+nP$ is a neighborhood of $\mathbf v$; that is, when $\mathbf k \in I_{\mathbf v}$
\begin{equation} nP \cap (\mathbf k - nP)\ =\ C(\mathbf{v}) \cap (\mathbf k - C(\mathbf v)).\end{equation}

To show that the sum in \eqref{eq:sumMissingSums}  is small, we show that the sum
\begin{equation}\sum_{\kk \in L(C(\mathbf{v}))} c\left( \frac{3}{4} \right)^{ |L(C(\mathbf v) \cap (\kk - C(\mathbf v) ) )| / 2 }\end{equation}
converges.  Because the terms are positive, it suffices to bound this sum above.  The reason we want to prove convergence is that our final step will be to bound \eqref{eq:sumMissingSums} by an arbitrarily small tail of this sum (recall that we will be cutting out a constant fixed fringe region of radius $r$ around $\mathbf{v}$, and we can make $r$ as large as desired).

Recall that $C(\mathbf v)$ is the convex hull of rays from $\mathbf{v}$ corresponding to edges of $P$.  Then $C(\mathbf v)$ is the union of convex hulls of $D$-tuples of those rays.  Since there are finitely many $D$-tuples, it suffices to show that the sum is bounded in each such region.

Let $R$ be one such region, the convex hull of $D$ rational-slope rays from $v$.  We wish to show that \begin{equation}\sum_{\kk \in L(R)} c\left( \frac{3}{4} \right)^{ |L(C(\mathbf v) \cap (\kk - C(\mathbf v) )) | / 2 }\end{equation} converges.  Since $R \subset C(\mathbf v)$,  \begin{equation}|L(R \cap (\kk - R))|\ <\ |L(C(\mathbf v) \cap (\kk - C(\mathbf v)))|,\end{equation} so it suffices to show that \begin{equation}S_R\ :=\ \sum_{\kk \in L(R)} c\left( \frac{3}{4} \right)^{ |L(R \cap (\kk - R )) | / 2 }\end{equation} converges. This is easier, because $R \cap (\kk - R)$ is simply a parallelotope for any $\kk \in L(R)$.

By induction on $D$ the sum over any facet of $R$ converges, because a facet of $R$ is the convex hull of $D-1$ rays from $v$, and the base case $n=1$ amounts to a geometric series.  Now that the boundary of $R$ has been dealt with, it remains to sum over the lattice points in the interior of $R$, which we shall denote $R^\circ$.  When $\kk$ is in the interior, $R \cap (\kk - R)$ has non-zero volume.  In fact, in the interior, there is a positive constant $c_1$ (depending on $R$) allowing us to bound the number of lattice points below by the volume. That is, for all $\kk \in L(R^\circ)$, \begin{equation} c_1 |R \cap (\kk - R )|\ <\ |L(R \cap (\kk - R ))|.\end{equation}

Using this upper bound, it suffices to show the convergence of
\begin{equation} S'_R\ :=\ \sum_{\kk \in L(R^\circ)} c\left(\frac{3}{4}\right)^{c_1|R \cap (\kk - R ) |}. \end{equation}

We can upper bound the resulting sum further.  Let $T$ be a rational affine transformation that maps $R$ onto the first orthant.  Because $T$ increases the volumes in the exponents by at most a constant factor $c_2$, applying $T$ gives us the new upper bound
\begin{equation} S'_R\ \le\ \sum_{\kk \in T(L(R^\circ))} c\left(\frac{3}{4}\right)^{ c_1 |T(R) \cap (\kk - T(R) ) |/c_2}. \end{equation}

Now, notice that $T(L(R^\circ))$ is a subset of the lattice $T(\mathbb{Z}^d)$ whose points all have positive (rational) coordinates.  Thus, for some rational $q > 0$, we have $L(R^\circ) \subset q \mathbb{N}^D$ and we may further bound the sum above by
\begin{equation} S'_R\ \le\ \sum_{\kk \in q\mathbb{N}^D} c\left(\frac{3}{4}\right)^{ c_1 |T(R) \cap (\kk - T(R) ) |/c_2}. \label{srprime} \end{equation}

Let $x = (3/4)^{c_1/c_2}$.  Since $T(R)$ is equal to the first orthant, $T(R) \cap (\kk - T(R))$ is simply a rectangular cell with opposite vertices $0$ and $\kk$, so our sum in \eqref{srprime} is equal to
\begin{equation} S_R''\ :=\ q^D \sum_{(k_1, k_2, \dots, k_D) \in \mathbb{N}^d} x^{k_1 k_2 \cdots k_D}.\end{equation}

To show that this sum converges, we may rewrite the sum as \begin{equation}S_R''\ =\ q^D \sum_{m \in \mathbb{N}} \psi(m) x^m,\end{equation} where $\psi(m)$ is the number of ways of writing $m$ as the ordered product of $D$ positive integers.  However, $\psi(m)$ is clearly bounded by  $m^D$, so since $x < 1$ the sum converges as desired.

Since the upper bound converges, our original sum \begin{equation} \sum_{\kk \in L(C(\mathbf v))} \left( \frac{3}{4} \right)^{  |L(nP \cap (\kk - nP )) | / 2}\end{equation} also converges.  It follows that by making the constant fixed fringe radius $r$ large enough, we can force the tail sum \begin{equation} \sum_{\kk \in (C(\mathbf{v}) \setminus B_r(nP+nP)) } \left( \frac{3}{4} \right)^{ |L(nP \cap (\kk - nP )) | / 2 }\end{equation} to be smaller than any $\varepsilon_{I_\mathbf{v}}$.
Since $I_\mathbf{v}$ lies in $L(nP_v \setminus B_r(nP+nP))$, the sum over $\kk \in I_\mathbf{v}$ is also smaller than $\varepsilon_{I_\mathbf{v}}$. Thus if we let $\varepsilon_I$ be the sum of $\varepsilon_{I_\mathbf{v}}$ over all vertices $\mathbf{v}$ of $nP+nP$, the probability that at least one middle sum is missing is at most $\varepsilon_M + \varepsilon_I$. In particular, if we choose $\varepsilon_M$ and $\varepsilon_I$ so that $\varepsilon_M + \varepsilon_I < 1 - p^+$, we have at least a constant positive probability $p^+$ that all middle sums are present, as desired.  This concludes the proof of Proposition \ref{prop:middleSums}. \end{proof}

We now examine the presence of middle differences.

\begin{proposition} \label{prop:middleDiffs}
Let $0<p^-<1$.  Suppose $P$ is locally point symmetric.  There exists some $r > 0$ such that for all sufficiently large $n$, the following holds:\ \ Let $F_r\subset B_r(nP)$, and let $A$ be uniformly randomly chosen from all subsets $S\subset L(nP)$ such that $S\cap B_r(nP) = F_r$.  Then $M_r(nP-nP) \subset A-A$ with probability at least $p^-$.
\end{proposition}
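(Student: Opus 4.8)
The plan is to mirror the proof of Proposition \ref{prop:middleSums}, replacing sums by differences and exploiting the local point symmetry of $P$ at the key places where the sum-case argument used no symmetry hypothesis. First I would establish the difference-set analogue of Lemma \ref{lemma:probMissingSum}: for a fixed fringe $F_r$ and $\kk \in M_r(nP-nP)$, the lattice points of $L(nP \cap (\kk + nP))$ partition into pairs $\{\x, \x-\kk\}$ (together with at most one fixed point when $\kk = \mathbf 0$, which does not occur for middle differences), and each pair not touching the fringe independently fails to produce $\kk$ with probability at most $3/4$; hence $\mathbb P[\kk \notin A - A] \le c\,(3/4)^{|L(nP \cap (\kk+nP))|/2}$ for a constant $c$ depending only on $r$ and $F_r$. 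By the union bound it then suffices to show
\begin{align}
\sum_{\kk \in M_r(nP-nP)} c\left(\tfrac34\right)^{|L(nP \cap (\kk+nP))|/2}\ <\ 1 - p^-
\end{align}
for $r$ and $n$ large.

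Next I would split $M_r(nP-nP)$ into a center set $C$ and an intermediate set $I$ exactly as before: $C$ consists of those $\kk$ with $|L(nP \cap (\kk + nP))|$ exceeding $2\log_{3/4}\bigl((\varepsilon_C/c)/|L(nP-nP)|\bigr)$, so that each term is at most $\varepsilon_C/|L(nP-nP)|$ and the sum over $C$ is at most $\varepsilon_C$; $I$ is the complement. The analogue of Lemma \ref{lemma:tlogballs} — that $I$ lies within balls of radius $t\log n$ around the vertices of $nP-nP$ — goes through verbatim after replacing $\kk - nP$ by $\kk + nP$ and $P+P$ by $P-P$, since the argument only used that $nP \cap (\kk + nP)$ having $O(\log n)$ lattice points forces $\kk/2$ (here the relevant center of $nP \cap (\kk+nP)$) to be near a vertex of $nP - nP$, where the shape is governed locally by supporting cones. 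Then, summing one vertex $\mathbf v$ of $nP - nP$ at a time, for large $n$ we have $nP \cap (\kk + nP) = C_1 \cap (\kk + C_2)$ where $C_1, C_2$ are the relevant supporting cones of $nP$ at the two vertices that generate the vertex $\mathbf v = \mathbf v_1 - \mathbf v_2$ of $nP - nP$.

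Here is where local point symmetry enters, and it is the main obstacle to transplanting the sum-case proof. In the sum case the two cones involved were $C(\mathbf v)$ and $\mathbf k - C(\mathbf v)$ — literally reflections of a single cone — which is what made $C(\mathbf v) \cap (\kk - C(\mathbf v))$ a parallelotope once we passed to a $D$-tuple of edge rays. For differences we a priori get $C_1 \cap (\kk + C_2)$ with $C_1, C_2$ the supporting cones at two distinct vertices $\mathbf v_1, \mathbf v_2$ of $P$; this intersection need not even be bounded, so the geometric-series estimate fails. But when $\mathbf v$ is a vertex of $nP - nP$, the generating vertices $\mathbf v_1, \mathbf v_2$ must be strictly antipodal, and by the local point symmetry hypothesis (Definition of locally point symmetric, via Lemma \ref{lemma:saVertices}) we may choose the antipodal pairing so that $C(\mathbf v_1) - \mathbf v_1 = \mathbf v_2 - C(\mathbf v_2)$, i.e. $C_1$ and $-C_2$ are translates of the same cone. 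Consequently $C_1 \cap (\kk + C_2)$ is, after translation, of the form $C \cap (\kk' - C)$ — a bounded polytope, and a parallelotope once we restrict to a $D$-tuple of edge rays — exactly the situation handled in the proof of Proposition \ref{prop:middleSums}. (One should also note every vertex of $nP - nP$ is such a difference of a strictly antipodal pair, and distinct middle differences near $\mathbf v$ correspond to distinct lattice points, so no overcounting occurs; and as in the sum case one may first reduce to the $D$-dimensional case by an affine transformation of the affine hull.)

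With that reduction in hand, the remainder is identical to the end of the proof of Proposition \ref{prop:middleSums}: decompose the cone into convex hulls of $D$-tuples of edge rays, induct on $D$ with the geometric series as base case, bound lattice points below by volume in the interior, push everything to the first orthant by a rational affine map, and recognize the resulting sum as $\sum_m \psi(m) x^m$ with $x < 1$ and $\psi(m) \le m^D$, hence convergent. Convergence of this tail sum lets us choose the fixed fringe radius $r$ so large that the contribution of $I$ to \eqref{eq:sumMissingSums}'s difference-analogue is below any prescribed $\varepsilon_I$; combined with the bound $\varepsilon_C$ on the center sum and choosing $\varepsilon_C + \varepsilon_I < 1 - p^-$, the union bound gives $\mathbb P[M_r(nP-nP) \subset A - A] \ge p^-$, completing the proof. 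The one genuinely new ingredient is the cone-matching step above; everything else is bookkeeping carried over from the sum case.
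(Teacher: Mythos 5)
Your overall plan matches the paper's, and your cone-matching observation via $C(\mathbf u) - \mathbf u = \mathbf v - C(\mathbf v)$ for strictly antipodal pairs is exactly the right way to reduce $C_1 \cap (\kk + C_2)$ to the form $C \cap (\kk' - C)$ handled in the sum case. But there is a genuine gap in your very first step, the difference-set analogue of Lemma~\ref{lemma:probMissingSum}. You claim the lattice points of $nP \cap (\kk + nP)$ \emph{partition} into pairs $\{\x, \x-\kk\}$ and that these pairs ``independently'' fail. This is false whenever $\kk$ is small enough that some $\x$ has $\x, \x-\kk, \x-2\kk \in L(nP)$: the pairs $\{\x,\x-\kk\}$ and $\{\x-\kk, \x-2\kk\}$ overlap, the corresponding events $\{X_{\x}X_{\x-\kk}=0\}$ and $\{X_{\x-\kk}X_{\x-2\kk}=0\}$ share the variable $X_{\x-\kk}$, and independence breaks. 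Your caveat about $\kk=\mathbf 0$ does not address this; the overlap occurs for all $\kk$ in roughly the inner half $\tfrac12(nP-nP)$, which includes the entire center set $C$. The paper repairs this in Lemma~\ref{lemma:probMissingDiff} by restricting to a subset $J_n$ (defined via the parity of $\lfloor x_1/k_1 \rfloor$) containing at least half the lattice points, on which the pair-events genuinely are pairwise independent; this recovers the $(3/4)^{|L(nP\cap(nP-\kk))|/2 - l}$ bound.

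A smaller but real gap is in your Step~1 analogue of Lemma~\ref{lemma:tlogballs}. You assert the argument ``goes through verbatim'' to show $\kk'$ is near a vertex of $nP-nP$. In the sum case, $P\cap(\kk'-P)$ collapsing to a single point immediately means $\kk'$ is a vertex of $P+P$. In the difference case, $P\cap(P-\kk')$ collapsing to a point only says $\kk'$ is a \emph{uniquely formed difference}; concluding $\kk'$ is a vertex of $P-P$ requires local point symmetry and an argument — this is precisely Lemma~\ref{lemma:verticesofdiffset}, which proves the equivalence of ``uniquely formed,'' ``vertex of $Q-Q$,'' and ``difference of a strictly antipodal pair.'' You invoke the conclusion of that equivalence later in the cone-matching step, but without it the sequence argument in Step~1 does not close. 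Both gaps are repairable and the rest of your bookkeeping tracks the paper closely.
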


\begin{proof} The proof is largely identical to the proof of Proposition \ref{prop:middleSums}.  We highlight the relevant differences here. In the course of the proof we state and prove two useful lemmas.

First of all, when considering sums, the pairs of points in $L(nP)$ that sum up to some $\mathbf{k} \in L(nP) + L(nP)$ are pairwise disjoint.  Thus, the probabilities that at least one point is missing from each pair are independent, so it is easy to the bound the probability that $\mathbf{k}$ is missing in $A+A$.  The same does not hold for differences, however, when a difference $\mathbf{k}\in L(nP) - L(nP)$ is small enough such that $\mathbf{x},\mathbf{x}+\mathbf{k}, \mathbf{x}+2\mathbf{k}\in L(nP)$ for some $\mathbf{x}\in L(nP)$.  Fortunately, as in \cite{MO}, the probability that such a small difference is missing is so tiny that a crude bound is sufficient.

\begin{lemma} \label{lemma:probMissingDiff}
Let $r>0$, and fix a fringe set $F_r\subset B_r(nP)$.  Let $A$ be chosen uniformly at random from all subsets $S\subset L(nP)$ such that $S\cap B_r(nP) = F_r$, and let $\mathbf{k} \in M_r(nP-nP)$ be large.  Then, for some constant $c > 0$ independent of $n$,
\begin{align}
\mathbb{P}[\mathbf{k}\not\in A-A]\ \le\ c\left(\frac{3}{4}\right)^{|L(nP\cap(nP-\mathbf{k})|/2}.
\end{align}
\end{lemma}

\begin{proof}
Define random variables $X_{\mathbf{j}}$ by setting $X_{\mathbf{j}} = 1$ if $\mathbf{j}\in A$ and $X_{\mathbf{j}} = 0$ otherwise.  We have $\mathbf{k}\not\in A-A$ if and only if $X_{\mathbf{j}}X_{\mathbf{j}+\mathbf{k}} = 0$ for all $\mathbf{j}\in L(nP \cap (nP-\mathbf{k}))$.

First suppose $\mathbf{k}$ is small such that $\mathbf{k}\in \frac{1}{2}(nP-nP)$, and suppose $\mathbf{k} = (k_1, k_2, \dots, k_D)$.  Define  
\begin{align}
G_n &\ := \  \left\{ (x_1, \dots, x_D) \in L(nP\cap (nP-\mathbf{k})) : \text{$\left\lfloor \frac{x_1}{k_1}\right\rfloor$ is even} \right\}, \\
H_n &\ := \  \left\{ (x_1, \dots, x_D) \in L(nP\cap (nP-\mathbf{k})) : \text{$\left\lfloor \frac{x_1}{k_1}\right\rfloor$ is odd} \right\},\\
J_n & \ :=\
\begin{cases}
G_n &\text{ if $|G_n| > |H_n|$} \\
H_n &\text{ if $|H_n| \ge |G_n|$.}
\end{cases}
\end{align}
It is possible that $J_n$ is $G_n$ or $H_n$ depending on $n$, hence the subscript notation.  It is clear that $\mathbf{x}+\mathbf{k}\not\in J_n$ for any $\mathbf{x}\in J_n$, and therefore the random variables $X_{\mathbf{j}}X_{\mathbf{j}+\mathbf{k}}$ are pairwise independent across all $\mathbf{j}\in J_n$.  As $|G_n| + |H_n| = |L(nP\cap (nP-\mathbf{k}))|$, it is further clear that
\begin{align}
|J_n|\ \ge\ \frac{1}{2}|L(nP\cap (nP-\mathbf{k}))|.
\end{align}

Thus, if our fixed fringe $F_r$ is missing exactly $l$ points, then we have that
\begin{align}
\mathbb{P}[X_{\mathbf{j}}X_{\mathbf{j}+\mathbf{k}} = 0 \text{ for all $\mathbf{j}\in L(nP \cap (nP-\mathbf{k}))$}]\ &\le\ \mathbb{P}[X_{\mathbf{j}}X_{\mathbf{j}+\mathbf{k}} = 0 \text{ for all $\mathbf{j}\in J_n$}] \nonumber \\
&=\ \prod_{\mathbf{j}\in J_n} \mathbb{P}[X_{\mathbf{j}}X_{\mathbf{j}+\mathbf{k}} = 0] \nonumber \\
&\le\ \left(\frac{3}{4}\right)^{|J_n|-l} \nonumber \\
&\le\ \left(\frac{3}{4}\right)^{|L(nP\cap(nP-\mathbf{k})|/2-l}.
\end{align}

Now suppose $\mathbf{k}\not\in\frac{1}{2}(nP-nP)$.  Then there exists no $\mathbf{j}\in L(nP)$ such that $\mathbf{j}, \mathbf{j}+\mathbf{k}, \mathbf{j}+2\mathbf{k} \in L(nP)$.  That is, the random variables $X_{\mathbf{j}}X_{\mathbf{j}+\mathbf{k}}$ are pairwise independent across all $\mathbf{j}\in L(nP \cap (nP-\mathbf{k}))$.  Then
\begin{align}
\mathbb{P}[X_{\mathbf{j}}X_{\mathbf{j}+\mathbf{k}} = 0 \text{ for all $\mathbf{j}\in L(nP \cap (nP-\mathbf{k}))$}]\ &=\ \prod_{\mathbf{j}\in L(nP \cap (nP-\mathbf{k}))} \mathbb{P}[X_{\mathbf{j}}X_{\mathbf{j}+\mathbf{k}} = 0] \nonumber \\
&\le\ \left(\frac{3}{4}\right)^{|L(nP\cap(nP-\mathbf{k}))|/2-l}.
\end{align}

In both cases, we can set $c = (3/4)^{-l}$ and the lemma follows.
\end{proof}


We define the regions $I$ and $C$ in the same way as in the sumset case.  In the difference set case, we analyze $nP \cap (nP-\kk)$ where in the sumset case we analyzed $nP \cap (\kk - nP)$.

The other aspect of the difference set case that deserves discussion is the difference set analogue of Lemma \ref{lemma:tlogballs}---that there exists a constant $t>0$ such that $I$ is contained in the union of balls of radius $t\log{n}$ around the vertices of $nP-nP$ for all $n$.  The reason the same proof carries through is that in any locally point symmetric polytope $Q$, the only uniquely formed differences are the differences between strictly antipodal vertices, and these differences are in one-to-one correspondence with the vertices of $Q-Q$.  When a difference $\kk \in Q-Q$ is close to one of these uniquely formed differences, $Q \cap (Q - \kk)$ is a parallelotope due to local point symmetry.  See Figure \ref{fig:diffsetk} for an illustration.

\begin{figure}
\centering
\begin{tikzpicture}[line cap=round,line join=round,>=triangle 45,x=1.0cm,y=1.0cm,scale=.9]

\draw
(0.3991342952275247,4.62266370699223)--
(-0.14194163367062618,3.109485261768588)--
(1.1302918316468693,2.22)--
(4.345530564652811,2.22)--
(4.564424451722249,2.8321608706179195)--
(2.0034628190898998,4.62266370699223)--
(0.3991342952275247,4.62266370699223);
\draw (3.1642266670997095,2.580678222170955) node {\small $Q - \mathbf{k}$};

\draw
(1.1466557706551417,6.274164641076495)--
(0.6055798417569908,4.760986195852852)--
(1.8778133070744862,3.8715009340842648)--
(5.093052040080427,3.8715009340842648)--
(5.311945927149866,4.483661804702184)--
(2.7509842945175165,6.274164641076495)--
(1.1466557706551417,6.274164641076495);
\draw (4.181122092214427,4.232179156255217) node {\small $Q$};

\draw[->]
(2.142700056336263,4.0628052065681155)--
(2.8902215318,5.7143061406);
\draw (2.275315836,4.99278) node {\small $\mathbf{k}$};
\draw [fill=black] (2.8902215318,5.7143061406) circle (1.5pt);
\draw [fill=black] (2.142700056336263,4.0628052065681155) circle (1.5pt);

\draw [fill=black] (1.8778133070744862,3.8715009340842648) circle (1.5pt);
\draw [fill=black] (2.7509842945175165,6.274164641076495) circle (1.5pt);
\draw (2.85,6.55) node {\small $\mathbf{v}$};
\draw (1.75,3.5) node {\small $\mathbf{u}$};

\end{tikzpicture}
\caption{\small \label{fig:diffsetk} A locally point symmetric polytope $Q$ with strictly antipodal vertices $\mathbf{u}$ and $\mathbf{v}$, a difference vector $\kk\in Q-Q$, and the translated polytope $Q-\kk$.  The difference $\kk$ is close to the uniquely formed difference $\mathbf{v}-\mathbf{u}$, and hence the intersection polytope $Q\cap (Q-\kk)$ is a parallelotope.}
\end{figure}
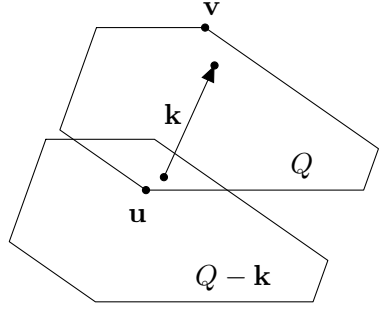


To be precise, in Step 1 of the proof for the sumset case, we show that $P \cap (\kk' - P)$ is just one point, and immediately conclude that $\kk'$ is a vertex of $P+P$.  Making the same conclusion takes some more work in the difference set case, which we do in the following lemma.

\begin{lemma} \label{lemma:verticesofdiffset}
Let $Q$ be a locally point symmetric polytope, and let $\kk \in Q-Q$.  The following statements are equivalent:
\begin{enumerate}
\item[(i)] $Q\cap (Q-\kk)$ consists of a single point, i.e., $\kk$ is a uniquely formed difference in $Q-Q$.
\item[(ii)] $\kk$ is a vertex of the polytope $Q-Q$.
\item[(iii)] $\kk = \mathbf{u}-\mathbf{v}$ for strictly antipodal vertices $\mathbf{u}$ and $\mathbf{v}$ of $Q$.
\end{enumerate}
\end{lemma}

\begin{proof}
In the proof of this lemma, we use the following facts about supporting cones which are not hard to prove:
\begin{enumerate}
\item Vertices $\mathbf{v}_i$ and $\mathbf{v}_j$ are strictly antipodal if and only if $C(\mathbf{v}_i) - \mathbf{v}_i$ and $C(\mathbf{v}_j) - \mathbf{v}_j$ intersect only at the origin.
\item Let $A$ be a face of $Q$ of dimension $k$ (a $k$-face), and let $\mathbf{a}$ be given in the interior of $A$ (relative to the $k$-dimensional affine space containing $A$).  If $\mathbf{x}$ is in the supporting cone of some vertex of $A$, then $\mathbf{a} + \epsilon \mathbf{x} \in Q$ for $\epsilon$ small.
\end{enumerate}
First, we show (i) $\implies$ (ii).  Let $\mathbf{u}$ and $\mathbf{v}$ be the unique points in $Q$ that satisfy $\kk = \mathbf{u} - \mathbf{v}$.  Suppose that $\mathbf{u}$ lies in a $k$-face $A$ and $\mathbf{v}$ lies in an $l$-face $B$, where $A$ and $B$ are chosen so that $k$ and $l$ are minimal. Note that $k$ and $l$ may range from $0$ to $D$.

Suppose there exist vertices $\mathbf{a} \in A$ and $\mathbf{b} \in B$ that are not strictly antipodal.  Then $C(\mathbf{a}) - \mathbf{a}$ and $C(\mathbf{b})-\mathbf{b}$ have an intersection containing some nonzero vector $\mathbf{x}$. Then for $\epsilon$ small,  $\mathbf{a}+\epsilon\mathbf{x}$ and $\mathbf{b}+\epsilon\mathbf{x}$ both lie in $Q$, so $\mathbf{u}+\epsilon\mathbf{x}$ and $\mathbf{v}+\epsilon\mathbf{x}$ both lie in $Q$. Thus, the difference $\kk = \mathbf{u} - \mathbf{v}$ is not uniquely formed.

Thus every vertex in $A$ must be strictly antipodal to every vertex in $B$.  Since the vertices of $Q$ are partitioned into strictly antipodal pairs, $A$ and $B$ must both be 0-faces; that is, $A = \{\mathbf{u}\}$ and $B = \{\mathbf{v}\}$, and $\mathbf{u}$ and $\mathbf{v}$ are strictly antipodal vertices as desired.

Next, we show (ii) $\implies$ (iii).  Let $\mathbf{u},\mathbf{v}\in Q$ such that $\kk = \mathbf{u}-\mathbf{v}$.  Observe that a point $\mathbf{v}\in Q$ is a vertex of $Q$ if and only if, for any non-zero translation vector $\mathbf{t}$, $\mathbf{v}+\mathbf{t}\in Q$ implies $\mathbf{v}-\mathbf{t}\not\in Q$.  The same statement holds for the polytope $Q-Q$.  If $\mathbf{u}$ is not a vertex of $Q$, then we have that $\mathbf{u}+\mathbf{t},\mathbf{u}-\mathbf{t}\in Q$ for some non-zero $\mathbf{t}$.  But then this implies that $\mathbf{k}+\mathbf{t} = (\mathbf{u}+\mathbf{t})-\mathbf{v}$ and $\mathbf{k}-\mathbf{t} = (\mathbf{u}-\mathbf{t})-\mathbf{v}$ are both contained in $Q-Q$, which contradicts that $\kk$ is a vertex of $Q-Q$.  Applying the same argument to $\mathbf{v}$, we get that $\mathbf{u}$ and $\mathbf{v}$ must both be vertices of $Q$.

Now suppose $\mathbf{u}$ and $\mathbf{v}$ are not strictly antipodal vertices.  We show that $\mathbf{k}+\mathbf{t},\mathbf{k}-\mathbf{t}\in Q-Q$ for some non-zero $\mathbf{t}$, which contradicts that $\mathbf{k}$ is a vertex of $Q-Q$.  Let $\mathbf{v}'$ denote the unique vertex that is strictly antipodal with $\mathbf{v}$.  For some small $\epsilon>0$, define $\mathbf{t} = \epsilon(\mathbf{v}'-\mathbf{u})$.  Clearly, $\mathbf{u}+\mathbf{t}\in Q$, so $\mathbf{k}+\mathbf{t} = (\mathbf{u}+\mathbf{t})-\mathbf{v}$ is contained in $Q-Q$.  Now consider the point $\mathbf{v}+\mathbf{t}$.  If $\mathbf{v}+\mathbf{t}\not\in Q$, and thus the half-line formed by extending out $\mathbf{t}$ from $\mathbf{v}$ is not contained in the supporting cone $C(\mathbf{v})$, then as $Q$ is locally point symmetric the parallel half-line formed by extending out from $\mathbf{v}'$ the vector $\mathbf{u}-\mathbf{v}'$ is not contained in $C(\mathbf{v}')$---a contradiction.  Thus $\mathbf{k}-\mathbf{t} = \mathbf{u}-(\mathbf{v}+\mathbf{t})$ is contained in $Q-Q$, which also forms a contradiction.  Thus, $\mathbf{u}$ and $\mathbf{v}$ must be strictly antipodal vertices.

Finally, we show (iii) $\implies$ (i).  Let $H_1$ and $H_2$ be parallel supporting hyperplanes meeting $Q$ at $\{\mathbf{u}\}$ and $\{\mathbf{v}\}$, respectively.  If $\mathbf{u}' \ne \mathbf{u}$ is a point in $Q$, then $\mathbf{u}' - \mathbf{k}$ lies on the side of $H_2$ opposite $Q$, so it cannot lie in $Q$.  Thus $\mathbf{k}$ is a uniquely formed difference in $Q$.
\end{proof}

The rest of the proof of Proposition \ref{prop:middleSums} carries over to the difference set case with trivial modifications.
\end{proof}




\section{Proof of Theorem \ref{thm:1}}

We begin by showing that the proportion $\rho^{s,d}_n$ of subsets $A\subset L(nP)$ missing exactly $s$ sums and exactly $2d$ differences approaches $0$ if $P$ is not locally point symmetric.  By Lemma \ref{lemma:uniqueDiff}, there exists a vertex $n\mathbf{v}$ and an edge $nE$ of $nP$ such that for all $\mathbf{e}\in nE$, the difference vectors $\mathbf{k} = \mathbf{e}-n\mathbf{v}$ and $-\mathbf{k}$ are uniquely formed.  Recall that $nE$ has at least $n+1$ lattice points.  Then, if $A$ is missing exactly $2d$ differences, at least $n+1-d$ of the lattice points in $nE$ must be present.  Thus, for $n>2d-1$, we see that
\begin{align}
\rho_n^{s,d}\ \le\ {n+1 \choose d}\left(\frac{1}{2}\right)^{n+1-d} \ =\ \Theta\left(\frac{n^d}{2^n}\right),
\end{align}
which approaches $0$ as $n\to\infty$.

We now handle the main case when $P$ is locally point symmetric.  For some radius $r$, we aim to construct a fringe set $F_r \subset B_r(nP)$ such that, for all sets $A$ that satisfy $A\cap B_r(nP) = F_r$,
\begin{align}
B_r(nP+nP) \setminus (A+A) &\ =\ s, \label{eq:fringeMissingSums}\\
B_r(nP-nP) \setminus (A-A) &\ =\  2d. \label{eq:fringeMissingDiffs}
\end{align}

If $P$ is 1-dimensional (a line segment), we simply place appropriate fringe sets at its ends as in \cite{He}.  Now suppose $P$ is $m$-dimensional for $m\ge 2$.  We can take a pair of strictly antipodal vertices $n\mathbf{v}_1$ and $n\mathbf{v}_2$ of $nP$, and a pair of parallel edges $nE_1$ and $nE_2$ such that $n\mathbf{v}_1 \in nE_1$ and $n\mathbf{v}_2\in nE_2$.  Suppose $nE_1$ and $nE_2$ contain $nb_{E_1}+1$ and $nb_{E_2}+1$ lattice points, respectively.  As discussed in the beginning of Section \ref{sec:edgeElem}, there exist injective affine transformations $T_{nE_1},T_{nE_2} : \mathbb{R}\to\mathbb{R}^D$ that form one-to-one correspondences between $[0,nb_{E_1}]$ and $L(nE_1)$, and between $[0,nb_{E_2}]$ and $L(nE_2)$, respectively.  We can also specify that $T_{nE_1}(0) = n\mathbf{v}_1$ and $T_{nE_2}(nb_{E_2}) = n\mathbf{v}_2$.  It is easily seen that $T_{nE_1}$ and $T_{nE_2}$ have the same associated linear transformation.

As shown in the proof of Theorem 8 in \cite{He}, for some $r'>0$ and $n> 2r'$, there exist sets $L_{s,d} \subset [0,r']$ and $U_{s,d} \subset [nb_{E_2}-r',nb_{E_2}]$ such that
\begin{align} \label{eq:hegartyMissingSums}
|[0,r'] \setminus (L_{s,d} + L_{s,d})| + |[2nb_{E_2}-r',2nb_{E_2}] \setminus (U_{s,d}+U_{s,d})|\ =\ s
\end{align}
and
\begin{align} \label{eq:hegartyMissingDiffs}
|[nb_{E_2}-r',nb_{E_2}] \setminus (U_{s,d}-L_{s,d})|\ =\ d.
\end{align}
Now define $r = \max\{r^+, r^-, r'\}$, where $r^+$ and $r^-$ are the constants given by Propositions \ref{prop:middleSums} and \ref{prop:middleDiffs}.  Let $B'_r(nP)$ denote the set
\begin{align}
B_r(nP)\setminus (T_{nE_1}([0,r']) \cup T_{nE_2}([nb_{E_2}-r',nb_{E_2}])),
\end{align}
and set
\begin{align}
F_r\ :=\ T_{nE_1}(L_{s,d}) \cup T_{nE_2}(U_{s,d}) \cup B'_r(nP).
\end{align}
That is, we place $L_{s,d}$ on one end of $nE_1$ and $U_{s,d}$ on one end of $nE_2$, and fill in all other points of $B_r(nP)$.  See Figure \ref{fig:lpsFringe} for an illustration.

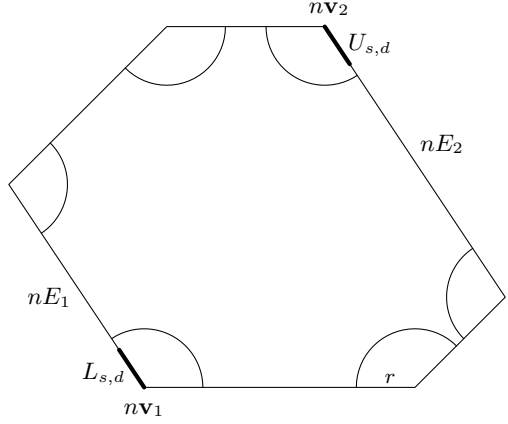
\begin{figure}[h]
\centering
\definecolor{ffffff}{rgb}{1,1,1}
\begin{tikzpicture}[line cap=round,line join=round,>=triangle 45,x=1.0cm,y=1.0cm,scale=0.6]

\draw (1.5,3) -- (5,3) -- (9,-3) -- (7,-5) -- (1,-5) -- (-2,-.5) -- cycle;

\draw [shift={(5.0,3.0)}] plot[domain=3.141592653589793:5.3003915839322575,variable=\t]({1.0*1.3*cos(\t r)+-0.0*1.3*sin(\t r)},{0.0*1.3*cos(\t r)+1.0*1.3*sin(\t r)});
\draw [shift={(1.5,3.0)}] plot[domain=-2.356194490192345:0.0,variable=\t]({1.0*1.3*cos(\t r)+-0.0*1.3*sin(\t r)},{0.0*1.3*cos(\t r)+1.0*1.3*sin(\t r)});
\draw [shift={(-2.0,-0.5)}] plot[domain=-0.9827937232473296:0.7853981633974483,variable=\t]({1.0*1.3*cos(\t r)+-0.0*1.3*sin(\t r)},{0.0*1.3*cos(\t r)+1.0*1.3*sin(\t r)});
\draw [shift={(1.0,-5.0)}] plot[domain=0.0:2.1587989303424644,variable=\t]({1.0*1.3*cos(\t r)+-0.0*1.3*sin(\t r)},{0.0*1.3*cos(\t r)+1.0*1.3*sin(\t r)});
\draw [shift={(7.0,-5.0)}] plot[domain=0.7853981633974483:3.141592653589793,variable=\t]({1.0*1.3*cos(\t r)+-0.0*1.3*sin(\t r)},{0.0*1.3*cos(\t r)+1.0*1.3*sin(\t r)});
\draw [shift={(9.0,-3.0)}] plot[domain=2.1587989303424635:3.926990816987242,variable=\t]({1.0*1.3*cos(\t r)+-0.0*1.3*sin(\t r)},{0.0*1.3*cos(\t r)+1.0*1.3*sin(\t r)});

\draw[ultra thick] (1,-5) -- (0.44529980377477085,-4.167949705662156);
\draw[ultra thick] (5,3) -- (5.554700196225229,2.1679497056621564);

\begin{scriptsize}
\draw (5.1,3.4) node {$n\mathbf{v}_2$};
\draw (1,-5.5) node {$n\mathbf{v}_1$};
\draw (6.45,-4.8) node {\tiny $r$};
\draw (0.1,-4.7) node {$L_{s,d}$};
\draw (6, 2.6) node {$U_{s,d}$};
\draw (-1.1,-3) node {$nE_1$};
\draw (7.6,.4) node {$nE_2$};
\end{scriptsize}
\end{tikzpicture}
\caption{\small \label{fig:lpsFringe} A locally point symmetric hexagon.  The fringe set $F_r$ lives within the balls of radius $r$ centered about the vertices.  The one-dimensional fringe sets $L_{s,d}$ and $U_{s,d}$ are placed on corresponding parallel edges $nE_1$ and $nE_2$ of the strictly antipodal vertices $n\mathbf{v}_1$ and $n\mathbf{v}_2$.}
\end{figure}

Now let $A$ be uniformly randomly chosen from all subsets $S\subset L(nP)$ such that $S\cap B_r(nP) = F_r$.  We see that
\begin{align}
A \cap nE_1 \ = \  T_{nE_1}(S_1), \ \ \ \ \ A \cap nE_2 \ = \  T_{nE_2}(S_2),
\end{align}
for some sets $S_1, S_2 \in \mathbb{Z}$ such that $S_1 \cap [0,r'] = L_{s,d}$ and $S_2 \cap [nb_{E_2}-r',nb_{E_2}] = U_{s,d}$.  By Lemma \ref{lemma:edgeSumSet}, there exist injective affine transformations $T_{nE_1+nE_1}, T_{nE_2+nE_2}: \mathbb{R}\to\mathbb{R}^D$ such that
\begin{align}
(A+A) \cap (nE_1+nE_1) &\ = \  T_{nE_1+nE_1}(S_1 + S_1), \nonumber\\
(A+A) \cap (nE_2+nE_2) &\ = \  T_{nE_2+nE_2}(S_2 + S_2).
\end{align}
It is easy to show that, then,
\begin{align}
(A+A) \cap T_{nE_1+nE_1}([0,r']) &\ = \  T_{nE_1+nE_1}((S_1+S_1)\cap [0,r']) \nonumber \\
						    &\ = \  T_{nE_1+nE_1}((L_{s,d}+L_{s,d}) \cap [0,r']),
\end{align}
and similarly
\begin{align}
(A+A) \cap T_{nE_2+nE_2}([2nb_{E_2}-r',2nb_{E_2}])\ =\ T_{nE_2+nE_2}((U_{s,d}+U_{s,d}) \cap [2nb_{E_2}-r',2nb_{E_2}]).
\end{align}
It follows from \eqref{eq:hegartyMissingSums} that $A+A$ is missing a total of exactly $s$ sums in the regions $T_{nE_1+nE_1}([0,r'])$ and $T_{nE_2+nE_2}([nb_{E_2}-r',nb_{E_2}])$.

Similarly, by Lemma \ref{lemma:edgeDiffSet}, there exists an injective affine transformation $T_{nE_2-nE_1}: \mathbb{R}\to \mathbb{R}^D$ such that
\begin{align}
(A-A) \cap (nE_2 - nE_1)\ =\ T_{nE_2 - nE_1}(S_2-S_1),
\end{align}
and we can show that
\begin{align}
(A-A) \cap T_{nE_2-nE_1}([nb_{E_2}-r',nb_{E_2}])\ =\ T_{nE_2-nE_1}((U_{s,d}-L_{s,d}) \cap [nb_{E_2}-r',nb_{E_2}]).
\end{align}
It follows by \eqref{eq:hegartyMissingDiffs} that $A-A$ is missing exactly $2d$ differences in the regions $T_{nE_2-nE_1}([nb_{E_2}-r',nb_{E_2}])$ and $-T_{nE_2-nE_1}([nb_{E_2}-r',nb_{E_2}])$.

Finally, it is not hard to show that all other elements in $B_r(nP+nP)$ and $B_r(nP-nP)$ are present, that is,
\begin{align}
B_r(nP+nP) \setminus (T_{nE_1+nE_1}([0,r']) \cup T_{nE_2+nE_2}([2nb_{E_2}-r',2nb_{E_2}])) \subset A+A, \nonumber\\
B_r(nP-nP) \setminus (T_{nE_2-nE_1}([nb_{E_2}-r',nb_{E_2}]) \cup -T_{nE_2-nE_1}([nb_{E_2}-r',nb_{E_2}]))\subset A-A.
\end{align}
Thus, we satisfy \eqref{eq:fringeMissingSums} and \eqref{eq:fringeMissingDiffs}.

Let $p^+ > 1/2$ and $p^- > 1/2$.  By Propositions \ref{prop:middleSums} and \ref{prop:middleDiffs}, we have that $M_r(nP+nP) \subset A+A$ with probabilities at least $p^+$, and that $M_r(nP-nP) \subset A-A$ with probability at least $p^-$, where $p^+$ and $p^-$ are fixed independent of $n$.  It follows that $M_r(nP+nP) \subset A+A$ and $M_r(nP-nP) \subset A-A$ with positive probability independent of $n$.  Thus, a positive proportion of the subsets $A$, and thus a positive proportion of all subsets of $L(nP)$, have exactly $s$ missing sums and exactly $2d$ missing differences. \qed

\section{Proof of Theorem \ref{thm:2}}

Similarly to as in the proof of Theorem \ref{thm:1}, the main task is to construct a fringe set $F_r \subset B_r(nP)$ for some radius $r$ such that, for all sets $A$ that satisfy $A\cap B_r(nP) = F_r$,
\begin{align}
B_r(nP+nP) \setminus (A+A) \ = \  s, \ \ \ \ \ B_r(nP-nP) \setminus (A-A) \ \ge\ 2d.
\end{align}
Once we construct $F_r$, the proof concludes identically.  The difference here is that because we do not assume local point symmetry in $P$, we are no longer guaranteed the existence of `distant' parallel edges, and thus cannot use Lemma \ref{lemma:edgeDiffSet} to control the number of missing differences.  On the other hand, we do not need to limit the number of missing differences so long as there are at least $2d$ of them.  This allows us to use Lemma \ref{lemma:uniqueDiff} to our advantage.

If $P$ is locally point symmetric, then we simply construct $F_r$ as in the proof of Theorem \ref{thm:1}.  Now suppose $P$ is not locally point symmetric.  Let $n\mathbf{v}$ and $nE_1$ denote, respectively, the vertex and edge returned by Lemma \ref{lemma:uniqueDiff} when it is applied to $nP$, and let $nE_2$ denote some other edge of $nP$ that is distinct from $nE$.  If $nE_1$ and $nE_2$ contain, respectively, $nb_{E_1}+1$ and $nb_{E_2}+1$ lattice points, then let $T_{nE_1}, T_{nE_2}: \R\to\R^D$ denote the injective affine transformations that form one-to-one correspondences between $[0,nb_{E_1}]$ and $L(nE_1)$, and between $[0,nb_{E_2}]$ and $L(nE_2)$, respectively.

As shown in the proof of Theorem 8 in \cite{He}, for some $r'>0$ and $n> 2r'$, there exist sets $L_{s} \subset [0,r']$ and $U_{s} \subset [nb_{E_2}-r',nb_{E_2}]$ such that
\begin{align} \label{eq:hegartyMissingSums2}
|[0,r'] \setminus (L_{s,0} + L_{s,0})| + |[2nb_{E_2}-r',2nb_{E_2}] \setminus (U_{s,0}+U_{s,0})|\ =\ s.
\end{align}
Further define
\begin{align}
R_d\ :=\ [0,d-1] \cup [2d, 3d-1],
\end{align}
and observe that $[0,3d-1] \subset R_d + R_d$.

Define $r = \max\{r^+, r^-, r', 3d-1\}$, where $r^+$ and $r^-$ are the constants given by Propositions \ref{prop:middleSums} and \ref{prop:middleDiffs}, respectively.  Define
\begin{align}
B'_r(nP)\ :=\ B_r(nP) \setminus (T_{nE_1}([0,3d-1]) \cup T_{nE_2}([0,r']) \cup T_{nE_2}([nb_{E_2}-r', nb_{E_2}]) ),
\end{align}
and set
\begin{align}
F_r\ :=\ T_{nE_1}(R_d) \cup T_{nE_2}(L_{s}) \cup T_{nE_2}(U_{s}) \cup B'_r(nP).
\end{align}
That is, we place $R_d$ on one end of $nE_1$, $L_{s}$ on one end of $nE_2$, and $U_{s}$ on the other end of $nE_2$, and fill in all other points of $B_r(nP)$.  See Figure \ref{fig:nonlpsQuad} for an illustration.

\begin{figure}
\centering
\begin{tikzpicture}[line cap=round,line join=round,>=triangle 45,x=1.0cm,y=1.0cm]
\clip(-0.42,-0.46) rectangle (8.18,6.46);
\draw (0.56,1.98)-- (4.94,5.62)-- (6.88,3.4)-- (5.04,1.12)-- (0.56,1.98);
\draw [domain=-0.42:8.18] plot(\x,{(--21.87-2.22*\x)/1.94});
\draw [domain=-0.42:8.18] plot(\x,{(--5.08-2.22*\x)/1.94});

\draw [shift={(4.94,5.62)}] plot[domain=3.834982009741346:5.430580817999774,variable=\t]({1.0*1.3000000000000003*cos(\t r)+-0.0*1.3000000000000003*sin(\t r)},{0.0*1.3000000000000003*cos(\t r)+1.0*1.3000000000000003*sin(\t r)});
\draw [shift={(0.56,1.98)}] plot[domain=-0.1896571072717741:0.6933893561515527,variable=\t]({1.0*1.3*cos(\t r)+-0.0*1.3*sin(\t r)},{0.0*1.3*cos(\t r)+1.0*1.3*sin(\t r)});
\draw [shift={(5.04,1.12)}] plot[domain=0.8917910180963523:2.951935546318019,variable=\t]({1.0*1.3000000000000003*cos(\t r)+-0.0*1.3000000000000003*sin(\t r)},{0.0*1.3000000000000003*cos(\t r)+1.0*1.3000000000000003*sin(\t r)});
\draw [shift={(6.88,3.4)}] plot[domain=2.28898816440998:4.0333836716861455,variable=\t]({1.0*1.2999999999999996*cos(\t r)+-0.0*1.2999999999999996*sin(\t r)},{0.0*1.2999999999999996*cos(\t r)+1.0*1.2999999999999996*sin(\t r)});

\draw [ultra thick] (6.88,3.4) -- (6.42,3.93); 
\draw [ultra thick] (4.94,5.62) -- (4.26,5.06); 
\draw [ultra thick] (0.56,1.98) -- (1.33,2.62); 

\begin{scriptsize}
\draw (0.3,1.72) node {$n\mathbf{v}$};
\draw (6.3,4.7) node {$nE_1$};
\draw (2.5,4.16) node {$nE_2$};
\draw (0.8,2.6) node {$L_s$};
\draw (4.4,5.6) node {$U_s$};
\draw (7,3.8) node {$R_d$};
\draw (4.5,1.4) node {\tiny $r$};
\end{scriptsize}
\end{tikzpicture}
\caption{\small \label{fig:nonlpsQuad} A quadrilateral that is not locally point symmetric.  Again, the fringe set $F_r$ lives within the balls of radius $r$ centered about the vertices.  The one-dimensional fringe set $R_d$ is placed on edge $nE_1$, and sets $L_s$ and $U_s$ are placed on opposite ends of edge $nE_2$.}
\end{figure}
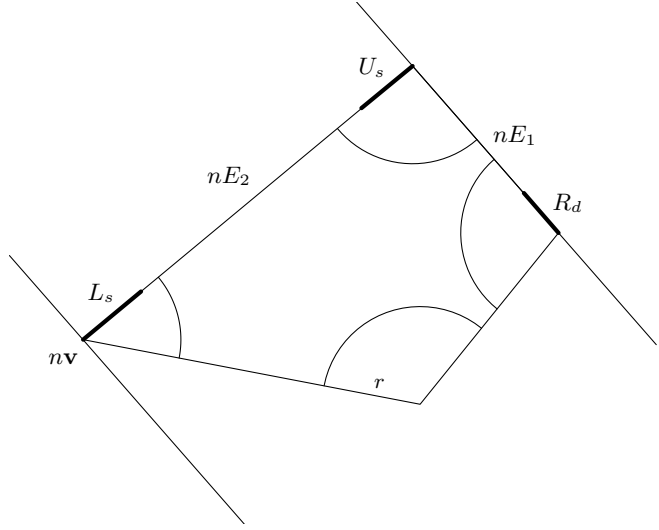

Now let $A$ be uniformly randomly chosen from all subsets $S\subset L(nP)$ satisfying $S\cap B_r(nP) = F_r$.  We see that
\begin{align}
A\cap nE_1 \ =\ T_{nE_1}(S_1), \ \ \ \ \ A\cap nE_2 \ = \ T_{nE_2}(S_2)
\end{align}
for some sets $S_1, S_2\in \Z$ such that $S_1\cap [0,3d-1] = R_d$, $S_2\cap [0,r'] = L_{s}$, and $S_2\cap [nb_{E_2}-r', nb_{E_2}] = U_{s}$.  By Lemma \ref{lemma:edgeSumSet}, there exists an injective affine transformation $T_{nE_2+nE_2}:\R\to\R^D$ such that
\begin{align}
(A+A) \cap (nE_2 + nE_2)\ =\ T_{nE_2+nE_2}(S_2+S_2).
\end{align}
It is easy to show that, then,
\begin{align}
(A+A) \cap T_{nE_2+nE_2}([0,r'])\ =\ T_{nE_2+nE_2}((L_{s}+L_{s})\cap[0,r'])
\end{align}
and
\begin{align}
(A+A) \cap T_{nE_2+nE_2}([2nb_{E_2}-r',2nb_{E_2}])\ =\ T_{nE_2+nE_2}((U_{s}+U_{s})\cap[2nb_{E_2}-r',2nb_{E_2}]).
\end{align}
It follows from \eqref{eq:hegartyMissingSums2} that $A+A$ is missing a total of exactly $s$ sums in the regions $T_{nE_2+nE_2}([0,r'])$ and $T_{nE_2+nE_2}([2nb_{E_2}-r',2nb_{E_2}])$.

As $A$ is missing $d$ lattice points along the edge $nE_1$, it follows from Lemma \ref{lemma:uniqueDiff} that $A-A$ is missing at least $2d$ differences.  Let $T_{nE_1+nE_1}:\R\to\R^D$ be the injective affine transformation returned by Lemma \ref{lemma:edgeSumSet} when applied to edge $nE_1$; because $[0,3d-1] \subset R_d + R_d$, we can show in a similar manner to the argument above that $A+A$ is not missing any sums in the region $T_{nE_1+nE_1}([0,3d-1])$.

Finally, it is not hard to show that all other elements in $B_r(nP+nP)$ are present.  That is, all points in the set
\begin{align}
B_r(nP+nP) \setminus (T_{nE_1+nE_1}([0,3d-1]) \cup T_{nE_2+nE_2}([0,r']) \cup T_{nE_2+nE_2}([2nb_{E_2}-r', 2nb_{E_2}]) )
\end{align}
are present in $A+A$.  The proof concludes identically as in the proof of Theorem \ref{thm:1} from here. \qed

\section{Future Directions} \label{sec:conclusion}

There are several natural directions in which to proceed from here.  
One conjecture is that the proportion $\rho_n^{s,d}$ converges if $P$ is locally point symmetric.  Zhao \cite{Z} proved this in the one-dimensional case, and with some work his arguments might be extended to arbitrary $D$-dimensional polytopes.  This would likely involve modifying Zhao's notion of a semi-rich set so that it is defined in terms of the supporting cone for each vertex of the polytope.

Another problem is to consider the proportion $\rho_n$ of MSTD subsets of $L(nP)$ for an arbitrary polytope $P$, which we discuss here in some detail.  As mentioned in the introduction, neither Theorem \ref{thm:1} nor Theorem \ref{thm:2} implies that $\rho_n$ is bounded below by a positive constant as $n\to\infty$.  This is due to the fact that $L(nP)$ is usually not balanced, such that $L(nP)-L(nP)$ is much larger than $L(nP)+L(nP)$.  In particular, the ratio $|L(nP)-L(nP)|/|L(nP)+L(nP)|$ is essentially constant as $n$ grows, and so we have that $|L(nP)-L(nP)|-|L(nP)+L(nP)|$ grows on the order of $n^D$.  Reformulating the problem in terms of missing sums and differences, we see that a subset $A\subset L(nP)$ must be missing $\sim n^D$ differences for it even possibly to be MSTD.

There are some factors that, upon first glance, suggest that there may be many such subsets.   If $P$ is not locally point symmetric (and therefore not balanced), then Lemma \ref{lemma:uniqueDiff} shows that there are many uniquely formed differences in $L(nP)-L(nP)$.  In other words, though the potential difference set is large in size, it is very fragile in that many of its differences are missing with high probability.  For example, consider the lattice points of the tetrahedron $nT$ in $\R^3$ determined by vertices $A = (-n,0,0)$, $B = (n,0,0)$, $C = (0,-n,n)$, and $D = (0,n,n)$.
By bounding $nT$ with supporting planes $z = 0$ and $z = n$, we see that any difference between a point in edge $\overline{AB}$ and a point in edge $\overline{CD}$ is uniquely formed.  Similarly, we have that any difference formed by $A$ and a point on the face $\triangle BCD$ is uniquely formed.  As this holds for any difference vector formed by a vertex and a point on the opposite face, or by points on skew edges of $nT$, we see that the presence of the boundary points of $nT$ have a significant impact on the size of the difference set---in this sense, the natural fringe extends to the entire boundary of $nT$ rather than being restricted to the balls centered about the vertices.

However, even if we make the strong imposition that a subset $A\subset L(nT)$ is missing all boundary points of $nT$, this still would not amount to the necessary $\sim n^3$ missing differences.  Each vertex forms around $\sim n^2$ uniquely formed differences with points on the opposite face, and each of the $\sim n$ points on the edges of $nT$ forms $\sim n$ unique differences with points on the opposite skew edge.  This suggests that subsets $A\subset L(nT)$ whose difference set is even within the range of the potential sumset become vanishingly rare as $n$ grows.

The tetrahedron is, in a sense, very far from being locally point symmetric.   The reason is that for each vertex $\mathbf{v}$, there are hyperplanes that support the tetrahedron precisely at $\mathbf{v}$ and the opposite face $F$.  Consider now the following locally point symmetric hexagon $H$, depicted in Figure \ref{fig:H}.
\begin{figure}
\centering
\begin{tikzpicture}
\draw [step=0.5,thin,gray!40] (-1.4,-.4) grid (3.4,4.4);
\draw [->] (-1.5,0) -- (3.5,0); 
\draw [->] (0,-.5) -- (0,4.5); 
\draw[thick] (0,0) -- (2,0) -- (3,2) -- (1,4) -- (0.5,4) -- (-1,1) -- (0,0);
\end{tikzpicture}
\caption{\small \label{fig:H} Locally point symmetric hexagon $H$}
\end{figure}
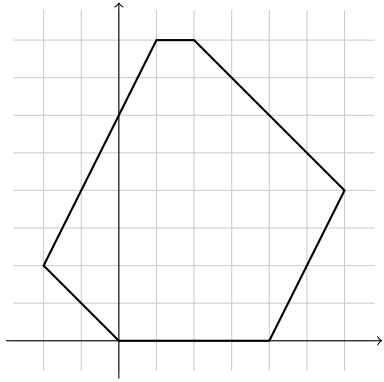
We can compute that $|L(H)+L(H)| = 181$ and $|L(H)-L(H)| = 187$, and the difference in these cardinalities grows quadratically as we take dilations of $H$.  In this case, however, the difference set is much more robust.  Because $H$ is locally point symmetric, we have no uniquely formed differences except those formed by pairs of strictly antipodal vertices.  Thus, we are forced to impose even stronger conditions on missing points in a subset $A\subset L(nH)$ for it to miss the required $\sim n^2$ differences.

From these considerations in combination with Corollary \ref{cor:3}, we make the following conjecture:

\begin{conjecture}
Let $P$ be polytope in $\R^D$ with vertices in $\Z^D$.  Then the proportion $\rho_n$ of MSTD subsets of $L(nP)$ approaches $0$ as $n\to\infty$ if and only if $L(P)$ is not balanced.
\end{conjecture}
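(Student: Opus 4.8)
The plan is to reduce the conjecture to a clean geometric dichotomy and then treat the two resulting regimes by entirely different techniques. Set
\[
g(n)\ :=\ |L(nP)-L(nP)|\ -\ |L(nP)+L(nP)|.
\]
Since $nP+nP=2nP$ and $nP-nP=n(P-P)$, with $P-P$ again a lattice polytope, Ehrhart's theorem shows $g$ agrees with a polynomial in $n$ of degree at most $D$ whose leading coefficient is $\mathrm{vol}(P-P)-2^{D}\,\mathrm{vol}(P)$. By the equality case of the Brunn--Minkowski inequality this quantity is $\ge 0$, with equality exactly when $P$ is point symmetric; moreover, if $P=\mathbf{x}-P$ with $\mathbf{x}\in\Z^{D}$ (which one may take to be a sum of two vertices of $P$), then $-P=P-\mathbf{x}$, so $P-P=2P-\mathbf{x}$ is a lattice translate of $2P$ and $g\equiv 0$. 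Hence exactly one of the following holds: either $P$ is point symmetric, in which case $L(nP)$ is balanced for every $n$; or $P$ is not point symmetric, in which case $g(n)=\Theta(n^{D})\to\infty$.

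First suppose $P$ is point symmetric. Then a subset $A\subseteq L(nP)$ is sum-dominant if and only if it has strictly more missing differences than missing sums, and Corollary~\ref{cor:3} (available since a point symmetric polytope is locally point symmetric, so Theorem~\ref{thm:1} applies) already produces a positive proportion of such $A$ for all large $n$. Thus $\rho_n$ stays bounded below by a positive constant; this settles the implication ``$L(P)$ balanced $\Rightarrow$ $\rho_n$ bounded below''.

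Now suppose $P$ is not point symmetric, so $g(n)=\Theta(n^{D})$; I would prove $\rho_n\to 0$ by a first moment estimate on missing differences. If $A$ is sum-dominant then $|A+A|>|A-A|$ forces $A$ to miss more than $g(n)$ differences. For $A$ uniform over all subsets of $L(nP)$, Lemma~\ref{lemma:probMissingDiff} with empty fringe gives $\mathbb{P}[\mathbf{k}\notin A-A]\le(3/4)^{|L(nP\cap(nP-\mathbf{k}))|/2}$ for every $\mathbf{k}\in L(nP)-L(nP)$ (the finitely many vertices of $nP-nP$ being handled trivially), whence
\[
\mathbb{E}\big[\#\{\text{missing differences of }A\}\big]\ \le\ \sum_{\mathbf{k}\in L(nP)-L(nP)}\Big(\tfrac34\Big)^{|L(nP\cap(nP-\mathbf{k}))|/2}.
\]
The crux is that this sum is $o(n^{D})$. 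Classify $\mathbf{k}$ by whether it has at most $\log^{2}n$ representations as a difference of lattice points of $nP$ or more. Those with few representations contribute $O(1)$ each; but, generalizing the boundary-layer bound of Lemma~\ref{lemma:tlogballs} (whose ``clustering near the vertices of $nP+nP$'' conclusion must now be relaxed to ``clustering near the whole boundary $\partial(nP-nP)$'', since $P$ need not be locally point symmetric), there are only $O(n^{D-1}\,\mathrm{polylog}\,n)$ of them --- this is exactly the count of uniquely formed and few-representation differences produced by Lemma~\ref{lemma:uniqueDiff} and its refinements, matching the heuristic in Section~\ref{sec:conclusion} where for the tetrahedron each vertex--opposite-facet pair and each pair of skew edges yields $\Theta(n^{D-1})$ such differences. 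The remaining $\Theta(n^{D})$ differences contribute $(3/4)^{\Omega(\log^{2}n)}$ each, which is superpolynomially small, so $o(1)$ in total. Hence $\mathbb{E}[\#\{\text{missing differences}\}]=O(n^{D-1}\,\mathrm{polylog}\,n)=o(n^{D})=o(g(n))$, and Markov's inequality gives
\[
\rho_n\ \le\ \mathbb{P}\big[\#\{\text{missing differences}\}>g(n)\big]\ \le\ \frac{\mathbb{E}[\#\{\text{missing differences}\}]}{g(n)+1}\ \longrightarrow\ 0 ,
\]
so no concentration estimate beyond the first moment is needed.

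The main obstacle is the gap between the stated hypothesis ``$L(P)$ is balanced'' and the condition ``$P$ is point symmetric'' used above. The first paragraph shows $P$ point symmetric $\iff$ $g\equiv 0$ $\iff$ $L(nP)$ balanced for all $n$, and the third shows any non-point-symmetric $P$ already forces $\rho_n\to 0$; so the conjecture is equivalent to the assertion that a lattice polytope with $|L(P)+L(P)|=|L(P)-L(P)|$ must be point symmetric, i.e.\ that $g(1)=0$ forces $g\equiv 0$ for these Ehrhart-type polynomials. I expect this to be the genuinely hard part: it concerns lattice points of a fixed polytope with no dilation to create room, so the asymptotic machinery above does not apply, and one cannot exclude a priori that some thin asymmetric lattice polytope happens to have equal sum- and difference-set sizes at $n=1$. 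If such a polytope exists the conjecture should be restated with ``$P$ is point symmetric'' (equivalently ``$L(nP)$ is balanced for every $n$'') in place of ``$L(P)$ is balanced''; otherwise, proving the implication --- presumably via a careful comparison of the low-order Ehrhart coefficients of $2P$ and $P-P$ --- completes the argument. A secondary technical point is the $o(n^{D})$ bound on the expected number of missing differences, which rests on the boundary-layer estimate generalizing Lemma~\ref{lemma:tlogballs} to arbitrary polytopes.
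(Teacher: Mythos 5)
This statement is labeled a Conjecture in the paper, and the paper offers no proof --- only heuristic discussion (the tetrahedron and hexagon examples, the observation that a subset must miss $\sim n^D$ differences to be sum-dominant when $L(P)$ is not balanced, and the explicit open question of whether $L(P)$ balanced forces $P$ point symmetric). So there is no proof of the paper's to compare your proposal against; what I can do is assess the proposal on its own terms.

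Your overall architecture is sound and, I think, the natural one. The Ehrhart/Brunn--Minkowski dichotomy (``$P$ point symmetric $\iff$ $g(n)\equiv 0$; otherwise $g(n)=\Theta(n^D)$'') is correct, modulo the small but fixable point that $|L(nP)\pm L(nP)|$ need not equal $|L(n(P\pm P))|$ for every $n$, only to leading order (and exactly for $n\ge D-1$ by normality of dilated lattice polytopes); your direct argument that a point-symmetric lattice polytope has a point-symmetric lattice-point set, hence balanced $L(nP)$ for all $n$, sidesteps this. The positive direction is fully handled by Corollary~\ref{cor:3}, and the first-moment-plus-Markov strategy for the negative direction is the right one --- no concentration is needed because you only need $\mathbb{E}[\#\text{missing differences}] = o(g(n))$. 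You have also correctly located the two genuine gaps, and it is worth stressing that both are real. (a) The conjecture as written hinges on the implication ``$L(P)$ balanced $\Rightarrow P$ point symmetric,'' which the paper itself raises as an open question in the surrounding discussion; without it, your argument proves only the restated version with ``$P$ point symmetric'' in place of ``$L(P)$ balanced,'' and it is not currently clear that no thin asymmetric lattice polytope satisfies $g(1)=0$. (b) The boundary-layer estimate $\#\{\mathbf{k}: |L(nP\cap(nP-\mathbf{k}))|\le \log^2 n\} = O(n^{D-1}\,\mathrm{polylog}\,n)$ is plausible (it checks out for the triangle and tetrahedron, and even a weaker $o(n^D)$ bound would suffice), but it is not a corollary of Lemma~\ref{lemma:tlogballs} --- that lemma's conclusion, that the low-representation region clusters near the \emph{vertices} of $nP-nP$, is what fails for non--locally-point-symmetric $P$, and you would be proving a genuinely new estimate about clustering near the whole boundary of $n(P-P)$ together with a lower bound on lattice-point counts of the thin intersections in terms of their volume. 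Until (a) and (b) are settled this remains, appropriately, a conjecture; but the route you sketch is the one I would expect a proof to take.
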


This raises the question of how to characterize polytopes $P$ for which $L(P)$ is not balanced.  We know that if $P$ is point symmetric, then $L(P)$ is balanced,  but does the converse hold true?  Or perhaps there exists some $P$, locally point symmetric but not point symmetric, for which $L(P)$ is balanced.  Does this imply that $L(nP)$ is also balanced for all $n$?

Finally, it is interesting to examine how the limiting proportions of $\rho_n$ and $\rho_n^{s,d}$ (assuming they exist) change as we vary our polytope $P$.  For example, if $P$ is a rectangle in $\R^2$, how do they change as we vary the ratio of side lengths?  What happens as we increase the number of sides?  How do the limiting proportions change as we vary the dimension $D$?  Do $\rho_n$ and $\rho^{s,d}_n$ exhibit monotonic growth with the dilation factor $n$, as computations suggest when $P$ is an interval (see \cite{MO})?  We hope to investigate these questions theoretically and numerically in a future paper.

\appendix

\section{Number of Pairs of Strictly Antipodal Vertices} \label{appendix:A}

We show that Lemma \ref{lemma:saVertices} follows from the work of Nguy\^{e}n and Soltan \cite{NS}.  We restate Lemma \ref{lemma:saVertices} here for the reader's convenience.

\begin{lemma} \label{lemma:saVertices2}
Let $Q$ be a $D$-dimensional polytope with $m$ vertices in $\R^D$.  Then $Q$ is locally point symmetric if and only if $Q$ has exactly $m/2$ pairs of strictly antipodal vertices.
\end{lemma}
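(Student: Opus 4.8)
The plan is to recast both notions---strict antipodality and local point symmetry---in terms of normal cones, where each becomes a statement about two fans in $\R^D$, and then to feed in the lower bound extracted from \cite{NS}. Write $v_1,\dots,v_m$ for the vertices of $Q$ and $N_i := N(v_i)$ for their normal cones, so that $\{N_1,\dots,N_m\}$ and $\{-N_1,\dots,-N_m\}$ are two complete fans covering $\R^D$ (properties (3), (4), (5)), each $N_i$ full-dimensional and pointed. I would first consider the common refinement of these two fans: its $D$-dimensional cells are exactly the full-dimensional intersections $N_i\cap(-N_j)$, and by property (2) together with pointedness (which excludes $i=j$), the ordered pairs $(i,j)$ producing such a cell are precisely the ordered strictly antipodal pairs. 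Hence, writing $A$ for the number of unordered strictly antipodal pairs, the refinement has exactly $2A$ full-dimensional cells. Since each $N_i$ is covered by the closed cones $-N_1,\dots,-N_m$ and is itself full-dimensional, it must contain at least one $D$-dimensional cell (a finite union of lower-dimensional sets cannot cover an open set), and distinct $N_i$ contribute distinct cells because their interiors are disjoint; summing over $i$ yields $2A\ge m$. This inequality $A\ge m/2$---equivalently, that every vertex of a $D$-polytope is strictly antipodal to at least one other vertex---is the fact we borrow from \cite{NS}.

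Next I would analyze the equality case $A=m/2$. Equality forces each $N_i$ to contain exactly one $D$-dimensional cell and no others, so there is a well-defined $\sigma(i)$ with $N_i\cap(-N_{\sigma(i)})$ full-dimensional and, using that $N_i$ is the closure of its interior while any point of $\mathrm{int}(N_i)$ outside $-N_{\sigma(i)}$ would have a neighborhood forcing a second full-dimensional overlap, one gets $N_i\subseteq -N_{\sigma(i)}$. Symmetry of strict antipodality makes $\sigma$ an involution, pointedness makes it fixed-point-free ($N_i\cap(-N_i)=\{0\}\ne N_i$), and combining $N_i\subseteq -N_{\sigma(i)}$ with $N_{\sigma(i)}\subseteq -N_i$ gives $N_i=-N_{\sigma(i)}$ for every $i$. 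Finally I would invoke the polar duality $N(v)=(C(v)-v)^{\circ}$ between the normal cone and the supporting cone translated to the origin, together with the set identity $v-C(v)=-(C(v)-v)$: these show $N(v_i)=-N(v_{\sigma(i)})$ is equivalent to $C(v_i)-v_i=v_{\sigma(i)}-C(v_{\sigma(i)})$, while $\mathrm{int}(N(v_i))=\mathrm{int}(-N(v_{\sigma(i)}))$ makes $v_i,v_{\sigma(i)}$ strictly antipodal. Thus the fixed-point-free involution $\sigma$ exhibits $Q$ as locally point symmetric.

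For the converse, suppose $Q$ is locally point symmetric with strictly antipodal pairs $\{u,v\}$ satisfying $C(u)-u=v-C(v)$, hence $N(u)=-N(v)$ by the same dictionary. If some vertex $u$ were strictly antipodal to a vertex $w$ other than its designated partner $v$ (note $w\ne u$ since no vertex is strictly antipodal to itself, so $w\notin\{u,v\}$), then $\mathrm{int}(N(u))\cap\mathrm{int}(-N(w))=-\bigl(\mathrm{int}(N(v))\cap\mathrm{int}(N(w))\bigr)=\emptyset$ by property (3), contradicting strict antipodality. Hence the strict-antipodality graph is exactly the given perfect matching, so $A=m/2$.

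I expect the main obstacle to be the equality-case bookkeeping in the middle paragraph: ruling out that some $N_i$ quietly overlaps several $-N_j$ in lower dimension while still containing only one full-dimensional cell, which is where the ``closure of the interior'' argument must be made airtight. A secondary technical point is pinning down the normal-cone/supporting-cone correspondence precisely enough that $C(u)-u=v-C(v)\iff N(u)=-N(v)$, which is exactly where full-dimensionality of $Q$ is used; once the two-fan picture is in place, the appeal to \cite{NS} itself should be routine.
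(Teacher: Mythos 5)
Your proof is correct, but it takes a genuinely different route from the one in the appendix. The paper invokes Nguy\^{e}n--Soltan twice as a black box---Theorem 3 of \cite{NS} for $D\ge 3$, from which the lemma falls out immediately, and Theorem 1 of \cite{NS} for $D=2$, which expresses $s(Q)=m-k$ in terms of the number $k$ of pairs of parallel sides and therefore requires a separate translation between ``exactly $m/2$ parallel-side pairs'' and local point symmetry for polygons (the $D=1$ case is handled trivially). Your argument instead works uniformly in every dimension by passing to the normal fan and its reflection: you recover the inequality $s(Q)\ge m/2$ directly from the observation that each full-dimensional $N(v_i)$, being covered by the closed cones $-N(v_j)$, must meet at least one of them in a full-dimensional cell, and you then extract the equality-case rigidity ($N(v_i)=-N(v_{\sigma(i)})$ for a fixed-point-free involution $\sigma$) from the closure-of-interior argument you sketch, rather than reading it off from the statement of \cite{NS}, Theorem~3. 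The polar-duality dictionary $N(v)=(C(v)-v)^\circ$ together with $v-C(v)=-(C(v)-v)$ then converts the cone identity into the definition of local point symmetry, and your converse direction verifying that the designated matching exhausts the strictly antipodal pairs is clean. What you buy is a dimension-uniform, essentially self-contained proof; what you give up is brevity, since the paper's version is just a citation plus a short polygon argument. Two small remarks: the inequality step you attribute to \cite{NS} is in fact already a consequence of your own covering argument, so the citation there is not logically necessary; and you should record explicitly that $N(v_i)\cap(-N(v_j))$ being full-dimensional is equivalent to the interiors meeting (a point interior to the intersection is interior to each factor, by convexity), since property~(2) in the paper is stated in terms of interiors while your cell count is stated in terms of full-dimensionality.
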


Let $s(Q)$ denote the number of pairs of strictly antipodal vertices in a convex polytope $Q$.  The following theorems come from Theorems 1 and 3 of \cite{NS}.

\begin{theorem} \label{thm:NS1}
For a convex polygon $Q\subset\R^2$ with $m$ vertices,
\begin{align}
s(Q)\ =\ m-k,
\end{align}
where $k$ ($0\le k\le \lfloor m/2 \rfloor$) is the number of pairs of parallel sides in $Q$.
\end{theorem}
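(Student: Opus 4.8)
The plan is to pass to normal cones on the unit circle and reduce the count to a combinatorial one. Let the vertices of $Q$ in cyclic order be $\mathbf{v}_1,\dots,\mathbf{v}_m$, the edges $e_1,\dots,e_m$, and $\mathbf{n}_i\in S^1$ the outward unit normal of $e_i$. For each vertex $\mathbf{v}_i$ the normal cone $N(\mathbf{v}_i)$ is the planar cone generated by the normals of the two edges at $\mathbf{v}_i$, so $A_i:=N(\mathbf{v}_i)\cap S^1$ is a closed arc whose endpoints lie in $\{\mathbf{n}_1,\dots,\mathbf{n}_m\}$; by properties (3) and (5) of normal cones the arcs $A_1,\dots,A_m$ tile $S^1$ with pairwise disjoint interiors, their common boundary being exactly $\{\mathbf{n}_1,\dots,\mathbf{n}_m\}$. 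Since the exterior angles of a convex polygon are positive and sum to $2\pi$, each $A_i$ has angular length strictly less than $\pi$; in particular $\operatorname{int}(A_i)\cap\operatorname{int}(-A_i)=\emptyset$, so strict antipodality will turn out to be an irreflexive relation. Finally, by property (2), $\mathbf{v}_i$ and $\mathbf{v}_j$ are strictly antipodal if and only if $\operatorname{int}(A_i)\cap\operatorname{int}(-A_j)\neq\emptyset$, where $-A_j\subset S^1$ is the antipodal arc.

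Set $\mathcal{B}:=\{\mathbf{n}_1,\dots,\mathbf{n}_m\}\cup\{-\mathbf{n}_1,\dots,-\mathbf{n}_m\}$ and cut $S^1$ at the points of $\mathcal{B}$; this yields $|\mathcal{B}|$ open arcs, the cells of the refinement. The heart of the argument is the claim that the assignment sending an ordered pair $(i,j)$ of strictly antipodal vertices to the set $\operatorname{int}(A_i)\cap\operatorname{int}(-A_j)$ is a bijection onto the set of cells. To see that the image is a single cell: $\operatorname{int}(A_i)$ contains no $\mathbf{n}_\ell$ and $\operatorname{int}(-A_j)$ contains no $-\mathbf{n}_\ell$, so $\operatorname{int}(A_i)\cap\operatorname{int}(-A_j)$ contains no point of $\mathcal{B}$; and since both $A_i$ and $-A_j$ are arcs of length less than $\pi$, the complement of $\operatorname{int}(A_i)$ in $S^1$ is an arc of length greater than $\pi$, so the arc $-A_j$ (being shorter than $\pi$) cannot leave $\operatorname{int}(A_i)$ and then return, whence $\operatorname{int}(A_i)\cap\operatorname{int}(-A_j)$ is connected --- hence a single cell with endpoints in $\mathcal{B}$ and no interior point of $\mathcal{B}$. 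Conversely, every cell lies in a unique $A_i$ and a unique $-A_j$, so it is contained in, and therefore equal to, $\operatorname{int}(A_i)\cap\operatorname{int}(-A_j)$ for those indices. Thus the number of \emph{ordered} pairs of strictly antipodal vertices equals $|\mathcal{B}|$.

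It remains to count $|\mathcal{B}|$ and to divide by two. Distinct edges of a convex polygon have distinct outward unit normals, so the $m$ points $\mathbf{n}_\ell$ are distinct, the $m$ points $-\mathbf{n}_\ell$ are distinct, and a coincidence $\mathbf{n}_i=-\mathbf{n}_j$ holds exactly when $e_i$ and $e_j$ are parallel sides; each such parallel pair $\{e_i,e_j\}$ produces precisely the two coincidences $\mathbf{n}_i=-\mathbf{n}_j$ and $\mathbf{n}_j=-\mathbf{n}_i$, and since a direction is the normal of at most one edge these parallel pairs are disjoint, giving $k\le\lfloor m/2\rfloor$ of them and $|\mathcal{B}|=2m-2k$. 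As strict antipodality is symmetric and (by the length bound above) irreflexive, the $2m-2k$ ordered pairs split into $m-k$ unordered pairs, so $s(Q)=m-k$. I expect the only real obstacle to be the connectedness step in the middle paragraph, where convexity enters through the inequality $|A_i|<\pi$; the rest is bookkeeping with the normal cones.
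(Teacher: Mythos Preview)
Your argument is correct: the reduction to arcs on $S^1$ via normal cones, the bijection between ordered strictly antipodal pairs and cells of the common refinement $\mathcal{B}$, the connectedness step (which is indeed where the bound $|A_i|<\pi$ does the work), and the count $|\mathcal{B}|=2m-2k$ all go through as you describe.

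There is nothing to compare against in the paper itself: Theorem~\ref{thm:NS1} is not proved there but is quoted verbatim from Nguy\^{e}n and Soltan~\cite{NS} and used as a black box in the proof of Lemma~\ref{lemma:saVertices2}. Your normal-fan argument is essentially the standard one and is close in spirit to the original proof in~\cite{NS}, which also works by analyzing how the antipodal map interacts with the partition of directions induced by the edge normals. So you have supplied a clean self-contained proof where the paper simply cites the result.
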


\begin{theorem} \label{thm:NS3}
For a convex $D$-dimensional polytope $Q\subset\R^D$, $m\ge D+1$, $D\ge 3$,
\begin{align}
s(Q)\ \ge\ \lceil m/2 \rceil.
\end{align}
For an even $m$, the equality $s(Q) = \lceil m/2 \rceil$ holds if and only if $m\ge 2D$ and the vertices of $Q$ can be divided into $m/2$ pairs such that for each pair $\{\mathbf{u},\mathbf{v}\}$,
\begin{align}
C(\mathbf{u}) - \mathbf{u}\ =\ \mathbf{v} - C(\mathbf{v}).
\end{align}
For an odd $m$, the equality $s(Q) = \lceil m/2 \rceil$ holds if and only if $m\ge 4D-1$ and some $(m-3)/2$ pairwise disjoint subsets of the form $\{\mathbf{u},\mathbf{v}\}$ can be chosen from the vertex set such that
\begin{align}
C(\mathbf{u}) - \mathbf{u}\ =\ \mathbf{v} - C(\mathbf{v})
\end{align}
for each of them, and the remaining three vertices $\mathbf{x}, \mathbf{y}, \mathbf{z}$ satisfy the relation
\begin{align}
(C(\mathbf{x}) - \mathbf{x}) \cap (C(\mathbf{y}) - \mathbf{y})\ =\ \mathbf{z} - C(\mathbf{z}).
\end{align}
\end{theorem}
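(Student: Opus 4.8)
The plan is to recast everything in the language of normal fans and then read off the number of strictly antipodal pairs from the difference body $Q-Q$. Recall that for a vertex $\mathbf v$ of $Q$ the normal cone $N(\mathbf v)$ is the polar dual of the translated supporting cone $C(\mathbf v)-\mathbf v$; that $N(\mathbf v_1),\dots,N(\mathbf v_m)$ are the maximal (full-dimensional) cones of a complete fan $\mathcal N$ in $\R^D$; that each $N(\mathbf v_i)$ is pointed, since $Q$ is full-dimensional; and that, by property (2) above, $\mathbf v_i$ and $\mathbf v_j$ are strictly antipodal exactly when $N(\mathbf v_i)\cap(-N(\mathbf v_j))$ is $D$-dimensional (and then automatically $i\neq j$, because a pointed cone meets its negative only at the origin). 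Since the normal fan of a Minkowski sum is the common refinement of the summands' normal fans, and the normal fan of $-Q$ is $-\mathcal N$, the normal fan of $Q-Q=Q+(-Q)$ is the common refinement $\mathcal N\wedge(-\mathcal N)$, whose maximal cones are exactly the $D$-dimensional intersections $N(\mathbf v_i)\cap(-N(\mathbf v_j))$. As a full-dimensional polytope has exactly one vertex per maximal cone of its normal fan, and negation preserves dimension, this yields the key identity $|\mathrm{vert}(Q-Q)|=2\,s(Q)$.

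The lower bound is then immediate: $\mathcal N\wedge(-\mathcal N)$ refines $\mathcal N$, hence has at least as many maximal cones, so $2\,s(Q)=|\mathrm{vert}(Q-Q)|\ge|\mathrm{vert}(Q)|=m$, and since $s(Q)\in\Z$ we get $s(Q)\ge\lceil m/2\rceil$. (This step is dimension-free; the hypothesis $D\ge 3$ merely excludes the planar case, which behaves differently per Theorem \ref{thm:NS1}.)

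For the equality case with $m$ even I would argue as follows. Equality $2\,s(Q)=m$ forces $\mathcal N\wedge(-\mathcal N)=\mathcal N$ (a refinement with no extra maximal cones coincides with the original fan), hence $\mathcal N$ refines $-\mathcal N$; as both fans have $m$ maximal cones this gives $\mathcal N=-\mathcal N$. Then $\mathbf v\mapsto\mathbf v'$, where $N(\mathbf v')=-N(\mathbf v)$, is a well-defined fixed-point-free involution on the vertex set — fixed-point-free because a pointed full-dimensional cone never equals its negative — so the vertices split into $m/2$ pairs, and dualizing $N(\mathbf v)=-N(\mathbf v')$ gives $C(\mathbf v)-\mathbf v=\mathbf v'-C(\mathbf v')$. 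Conversely, any such pairing makes $\mathcal N=-\mathcal N$ and hence $2\,s(Q)=m$. Finally, when $\mathcal N=-\mathcal N$ the difference body $Q-Q$ has normal fan $\mathcal N\wedge(-\mathcal N)=\mathcal N$, so it has exactly $m$ vertices; it is centrally symmetric, being a difference body, and full-dimensional, so its $m=2k$ vertices $\pm\mathbf w_1,\dots,\pm\mathbf w_k$ must span $\R^D$ linearly, forcing $k\ge D$, i.e. $m\ge 2D$. This settles the even case.

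For $m$ odd the same dictionary shows $s(Q)=(m+1)/2$ exactly when $\mathcal N\wedge(-\mathcal N)$ has one more maximal cone than $\mathcal N$, i.e. exactly one cone $N(\mathbf z)$ of $\mathcal N$ is subdivided by $-\mathcal N$, into two pieces $N(\mathbf z)\cap(-N(\mathbf x))$ and $N(\mathbf z)\cap(-N(\mathbf y))$ with $\mathbf x\neq\mathbf y$; a bookkeeping argument on which cones can do the subdividing should show that the remaining $m-3$ vertices still pair up with $C(\mathbf u)-\mathbf u=\mathbf v-C(\mathbf v)$ and that the triple satisfies $\mathrm{conv}\bigl(N(\mathbf x)\cup N(\mathbf y)\bigr)=-N(\mathbf z)$, which dualizes to $(C(\mathbf x)-\mathbf x)\cap(C(\mathbf y)-\mathbf y)=\mathbf z-C(\mathbf z)$. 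The main obstacle is the dimension bound $m\ge 4D-1$: the difference-body argument only yields that $Q-Q$ is centrally symmetric with $m+1$ vertices, hence $m+1\ge 2D$, i.e. $m\ge 2D-1$, which is far too weak. The extra strength must come from the rigidity forced by the defective triple $\{\mathbf x,\mathbf y,\mathbf z\}$: the relation $\mathrm{conv}(N(\mathbf x)\cup N(\mathbf y))=-N(\mathbf z)$ pins down the fan so tightly near those cones that it compels on the order of $2D$ further vertices elsewhere, and making this precise — an argument about complete fans that are centrally symmetric outside three maximal cones, as in Nguy\^en and Soltan \cite{NS} — is where I expect the real difficulty to lie; everything else follows cleanly from the normal-fan/difference-body correspondence above.
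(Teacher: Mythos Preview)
The paper does not prove this theorem at all: it is quoted verbatim from Nguy\^{e}n and Soltan \cite{NS} (their Theorem~3) and used as a black box to deduce Lemma~\ref{lemma:saVertices}. So there is no ``paper's own proof'' to compare against.

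That said, your normal-fan/difference-body argument is a clean and correct way to obtain the inequality $s(Q)\ge\lceil m/2\rceil$ and the full characterization of equality in the even case. The identity $|\mathrm{vert}(Q-Q)|=2\,s(Q)$ via the common refinement $\mathcal N\wedge(-\mathcal N)$ is exactly the right organizing principle, and your derivation of $m\ge 2D$ from the central symmetry of $Q-Q$ is tidy. For the purposes of this paper (which only uses the even case, via Lemma~\ref{lemma:saVertices}), your argument would in fact suffice as a self-contained replacement for the citation.

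The genuine gap is the odd case, and you have correctly identified where it lies. Your sketch that exactly one cone $N(\mathbf z)$ is bisected is fine, but the assertion that the remaining $m-3$ vertices still pair off, and that the three exceptional cones satisfy $\mathrm{conv}\bigl(N(\mathbf x)\cup N(\mathbf y)\bigr)=-N(\mathbf z)$, requires a careful symmetry-and-counting argument that you have not carried out: one must also track how $-\mathcal N$ is refined by $\mathcal N$ and reconcile the two bisections using the central symmetry of $\mathcal N\wedge(-\mathcal N)$. More seriously, as you note, the bound $m\ge 4D-1$ does not fall out of the difference-body count (which gives only $m\ge 2D-1$); it comes from a separate construction-and-obstruction argument in \cite{NS} that analyzes how the exceptional triple forces many additional facets, and there is no shortcut here via the global invariants you are using.
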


Let $Q$ be a $D$-dimensional polytope with $m$ vertices in $\R^D$.  If $D = 1$, then $Q$ is an interval and satisfies Lemma \ref{lemma:saVertices2}.  If $D \ge 3$, then Lemma \ref{lemma:saVertices2} follows immediately from Theorem \ref{thm:NS3}.

It remains to show Lemma \ref{lemma:saVertices2} in the case $D = 2$.  By Theorem \ref{thm:NS1}, it suffices to show that $Q$ is locally point symmetric if and only if $Q$ has exactly $m/2$ pairs of parallel sides.  As showing this is easy, we sketch the idea here.  The forward implication is immediate.  Now suppose $Q$ has exactly $m/2$ pairs of parallel sides, and further suppose $Q$ has vertices $\mathbf{v}_1, \mathbf{v}_2, \dots, \mathbf{v}_m$ in clockwise order.  We can show that for any pair of parallel sides $E = \overline{\mathbf{v}_i\mathbf{v}_{i+1}}$ and $F = \overline{\mathbf{v}_j\mathbf{v}_{j+1}}$ of $Q$, there exist supporting lines $L_1$ and $L_2$ such that $L_1 \cap Q = E$ and $L_2\cap Q = F$.  From there, we can show that $\mathbf{v}_i$ and $\mathbf{v}_j$ are strictly antipodal, and $\mathbf{v}_{i+1}$ and $\mathbf{v}_{j+1}$ are strictly antipodal.  That $Q$ is locally point symmetric follows easily from there.



\begin{thebibliography}{DKMMW} 

\bibitem[DKMMW]{DKMMW}  T. Do, A. Kulkarni, S. J. Miller, D. Moon, and J. Wellens, \emph{Sums and Differences of Correlated Random Sets}, preprint 2013. \burl{http://arxiv.org/abs/1401.2588}

\bibitem[He]{He} P. V. Hegarty, Some explicit constructions of sets with more sums than differences (2007), Acta
Arithmetica \textbf{130} (2007), no. 1, 61--77.

\bibitem[HM]{HM} P. V. Hegarty and S. J. Miller, \emph{When almost all sets are difference dominated}, Random Structures and Algorithms \textbf{35} (2009), no. 1, 118--136.

\bibitem [LMO]{LMO}
\newblock O. Lazarev, S. J. Miller and K. O'Bryant, \emph{Distribution of Missing Sums in Sumsets} (2013), Experimental Mathematics \textbf{22} (2013), no. 2, 132--156.

\bibitem[MO]{MO} G. Martin and K. O'Bryant, \emph{Many sets have more sums than differences}, in Additive Combinatorics, CRM Proc. Lecture Notes, vol. 43, Amer. Math. Soc., Providence, RI, 2007, 287--305.

\bibitem[NS]{NS} M. Nguy\^{e}n and V. Soltan, \emph{Lower Bounds for the Numbers of Antipodal Pairs and Strictly Antipodal Pairs of Vertices in a Convex Polytope}, Discrete \& Computational Geometry \textbf{11} (1994), no. 1, 149--162.

\bibitem[Z]{Z} Y. Zhao, \emph{Sets Characterized by Missing Sums and Differences}, Journal of Number Theory \textbf{131} (2011), 2107--2134.
\ \\

\end{thebibliography}
\end{document}